\renewcommand{\d}{\mathrm{d}}
\newcommand{\e}{\mathrm{e}}
\newcommand{\R}{\mathbb{R}}
\newcommand{\Z}{\mathbb{Z}}
\newcommand{\bv}{\bm{v}}
\newcommand{\bw}{\bm{w}}
\newcommand{\bx}{\bm{x}}
\newcommand{\hE}{\mathcal{E}}
\newcommand{\Mh}{\mathcal{M}_h}
\newcommand{\<}{\langle}
\renewcommand{\>}{\rangle}
\newcommand{\uinit}{u_{\text{\rm init}}}
\newcommand{\kp}{\kappa}
\newcommand{\nn}{\nonumber}
\newcommand{\dt}{{\tau}}
\newcommand{\eps}{\varepsilon}
\newcommand{\daoshu}[2]{\frac{\d #1}{\d #2}}
\newtheorem{remark}{Remark}[section]
\title{Generalized SAV-exponential integrator schemes \\for Allen--Cahn type gradient flows}
\author{
Lili Ju\footnotemark[2]
\and Xiao Li\footnotemark[3]
\and Zhonghua Qiao\footnotemark[4]
}
\begin{document}

\maketitle

\renewcommand{\thefootnote}{\fnsymbol{footnote}}

\footnotetext[2]
{Department of Mathematics, University of South Carolina, Columbia, SC 29208, USA ({ju@math.sc.edu}).
L. Ju's work is partially supported by US National Science Foundation grant DMS-2109633
and US Department of Energy grant DE-SC0020270.}
\footnotetext[3]
{Department of Applied Mathematics, The Hong Kong Polytechnic University, Hung Hom, Kowloon, Hong Kong ({xiao1li@polyu.edu.hk}).
X. Li's work is partially supported by the Hong Kong Research Council GRF grant 15300821
and the Hong Kong Polytechnic University grants 4-ZZMK and 1-BD8N.}
\footnotetext[4]
{Department of Applied Mathematics \& Research Institute for Smart Energy,
The Hong Kong Polytechnic University, Hung Hom, Kowloon, Hong Kong ({zqiao@polyu.edu.hk}).
Z. Qiao's work is partially supported by the Hong Kong Research Council RFS grant RFS2021-5S03 and GRF grant 15302919,
the Hong Kong Polytechnic University grant 4-ZZLS, and the CAS AMSS-PolyU Joint Laboratory of Applied Mathematics.
}

\renewcommand{\thefootnote}{\arabic{footnote}}

\begin{abstract}
The energy dissipation law and the maximum bound principle (MBP)
are two important physical features of the well-known Allen--Cahn equation.
While some commonly-used first-order time stepping schemes have turned out to
preserve unconditionally both energy dissipation law and MBP  for the equation,
restrictions on the time step size are still needed for existing second-order or even higher-order schemes in order to have
such simultaneous preservation.
In this paper, we develop and analyze novel first- and second-order linear numerical schemes
for  a class of Allen--Cahn type gradient flows. Our schemes combine the generalized scalar auxiliary variable (SAV) approach
and the exponential time integrator with a stabilization term, while the standard central difference stencil is used for discretization of
the spatial differential operator. We not only  prove their unconditional  preservation of  the energy dissipation law and the MBP
in the discrete setting, but also derive their optimal temporal error estimates under fixed spatial mesh.
Numerical experiments are also carried out to demonstrate the properties and performance of the proposed schemes.
\end{abstract}

\begin{keywords}
second-order linear scheme,
energy dissipation law,
maximum bound principle,
exponential integrator,
scalar auxiliary variable
\end{keywords}

\begin{AMS}
35K55, 65M12, 65M15, 65F30
\end{AMS}

\pagestyle{myheadings}
\thispagestyle{plain}
\markboth{LILI JU, XIAO LI, AND ZHONGHUA QIAO}
{Generalized SAV-Exponential Integrator Schemes for Allen--Cahn Type Gradient Flows}

\section{Introduction}

The classic Allen--Cahn equation,
originally introduced in \cite{AlCa79} to model the motion of the anti-phase boundaries in crystalline solids,
takes the following form:
\begin{equation}
\label{AllenCahn}
u_t = \eps^2\Delta u + f(u), \quad t > 0, \ \bx \in \Omega,
\end{equation}
where the spatial domain $\Omega\subset\R^d$,
$u(t,\bx):[0,\infty)\times\overline{\Omega}\to\R$ is the unknown function,
$\eps>0$ is an interfacial parameter,
and $f(u)=u-u^3$ is the nonlinear reaction.
Equipped with the periodic or homogeneous Neumann boundary condition,
the equation \eqref{AllenCahn} can be viewed as
the $L^2$ gradient flow with respect to the energy functional
\begin{equation}
\label{energy}
E(u) = \int_\Omega \Big( \frac{\eps^2}{2}|\nabla u(\bx)|^2 + F(u(\bx)) \Big) \, \d\bx,
\end{equation}
where $F(u)=\frac{1}{4}(u^2-1)^2$ (i.e., $-F' = f$) is the double-well potential function,
and thus satisfies the so-called \textit{energy dissipation law}
in the sense that the solution to \eqref{AllenCahn} decreases the energy \eqref{energy} along with the time,
i.e., $\daoshu{}{t}E(u(t))\le0$.
The solution $u$ usually represents the difference between
the concentrations of two components of the alloy,
and thus should be evaluated between $-1$ and $1$ naturally,
which corresponds to another important feature, the \textit{maximum bound principle} (MBP),
i.e., if the initial value falls pointwise between $-1$ and $1$,
then so is the solution for all time.
Recently, some variants of the Allen--Cahn equation \eqref{AllenCahn}
have been developed to model various processes of phase transition,
such as the nonlocal Allen--Cahn equation for phase separations within long-range interactions \cite{Bates06}
and the fractional Allen--Cahn equation for anomalous diffusion processes \cite{GuiZh15},
and they also satisfy the energy dissipation law with respect to their respective energy and the MBP.
Since the analytic solutions to these models are usually not available,
numerical solutions play a key role in their study and applications. In order to obtain efficient and stable
numerical simulations and avoid nonphysical results, it is highly desirable to design
accurate numerical methods in space and time which also preserve important physical features of the models,
such as the energy dissipation law and the MBP.

In recent years, numerical schemes  preserving the energy dissipation law
have attracted a lot of attention for time integration of  the Allen--Cahn type equations and other gradient flows,
including convex splitting schemes \cite{GuWaWi14,ShWaWaWi12,WiWaLo09},
stabilized implicit-explicit (IMEX) schemes \cite{FeTaYa13,ShYa10b,XuTa06},
discrete gradient schemes \cite{DuNi90,Furihata01,McQuRo99},
exponential time differencing (ETD) schemes \cite{DuJuLiQi21,JuLiQiZh18,ZhuJuZh16},
invariant energy quadratization (IEQ) schemes \cite{XuYaZhXi19,YangZh20},
scalar auxiliary variable (SAV) schemes \cite{ChengWa21,ShXu18,ShXuYa18,ShXuYa19},
and some variants of the SAV method \cite{ChengLiSh20,ChengLiSh21,HouAzXu19,HuangShYa20,LiuLi20}.
By combining the SAV approach with the Runge--Kutta (RK) method,
arbitrarily high-order linear schemes preserving energy dissipation law were developed in \cite{AkrivisLiLi19,GongZh19}.
In addition, there are also  a large amount of literature denoted to
MBP-preserving numerical schemes for the Allen--Cahn type gradient flow problems,
such as the stabilized IMEX schemes \cite{ShTaYa16,TaYa16}
and the exponential integrator methods \cite{DuJuLiQi21,LiJuCaFe21}.
Borrowing the idea of strong stability-preserving methods \cite{GoShTa01},
MBP-preserving RK-type schemes with high-order accuracy were studied theoretically
and up to fourth-order schemes were provided for practical computations
in \cite{JuLiQiYa21,LiLiJuFe21,ZhangYaQiSo21}.
However, among all schemes we have just mentioned,
only a few first-order schemes can preserve simultaneously the energy dissipation law and the MBP {\em unconditionally}
\cite{DuJuLiQi21,DuYaZh20,ShTaYa16} (i.e., without any restriction on the time step size),
while the second-order schemes always require certain restrictions on the time step size \cite{HoTaYa17,LiaoTaZh20}.
It is an interesting and important question whether there exist  second-order or even higher-order time stepping schemes
preserving both the energy dissipation law and the MBP unconditionally.
An initial improvement was made in \cite{YangYuZh22} by considering the high-order SAV-RK method \cite{AkrivisLiLi19}
to guarantee the energy dissipation, and the maximum bound is enforced by the cut-off post-processing but not by the scheme itself.

{
The $H^{-1}$ gradient flow with respect to the energy \eqref{energy}
gives the classic Cahn--Hilliard equation $u_t = -\Delta(\eps^2\Delta u + f(u))$,
which fails to satisfy the MBP due to the existence of the fourth-order dissipation term.
It is also worth noting that, if the nonlinear reaction function $f$ is changed to the logarithmic one defined by \eqref{f_fh} (i.e., corresponding to the Flory--Huggins potential)  tested in our numerical experiments,
the solution of the Cahn--Hilliard equation still remains in the open interval $(-1,1)$ for all the time under  some appropriate boundary conditions \cite{CherfilsMiZe11,ElliottLu91},
where $\pm1$ are the points near which the singularities occur.
Implicit or implicit-explicit numerical schemes preserving such uniform boundedness  have also been developed, 
where the singularity of the nonlinear term plays a crucial role
\cite{ChenWaWaWi19,DongWaZhZh19} in their construction.
}

In this paper, our main purpose  is to systematically develop first- and second-order  (in time)  linear numerical schemes
preserving both the energy dissipation law and the MBP unconditionally
for a family  of Allen--Cahn type gradient flows.
More precisely, we will consider the equation \eqref{AllenCahn}
with a more general reaction term $f:\R\to\R$ given by a continuously differentiable function satisfying
\begin{equation}
\label{assump}
\text{there exists a constant $\beta>0$ such that $f(\beta) \le 0\le f(-\beta)$}.
\end{equation}
The periodic or homogeneous Neumann boundary condition is equipped
and the initial condition is given  as
$u(0,\cdot) = \uinit$ on $\overline{\Omega}$.
Then, the MBP holds \cite{DuJuLiQi21} in the sense that
if the absolute value of the initial value is bounded pointwise by $\beta$,
then the absolute value of the solution is also bounded pointwise by $\beta$ for all time, i.e.,
\begin{equation}
\label{mbp_cont}
\max_{\bx\in\overline{\Omega}} |\uinit(\bx)| \le \beta \qquad \Longrightarrow \qquad
\max_{\bx\in\overline{\Omega}} |u(t,\bx)| \le \beta, \quad \forall \, t > 0.
\end{equation}
Furthermore, the energy dissipation law is also satisfied with respect to the energy \eqref{energy} with
 $F$ now being  a smooth potential function satisfying $F'=-f$.
The key ingredient is the appropriate combination of the  SAV approach  and  the exponential integrator method \cite{CoMa02,HoOs10}. Note that similar ideas have been applied to some nonlinear hyperbolic-type equations \cite{CuiXuWaJi21,JiangWaCa20}.
We first reformulate the model equation \eqref{AllenCahn} in an equivalent form
by defining an auxiliary variable, similar to the idea of the generalized SAV approach \cite{ChengLiSh21}.
Then, we introduce a stabilization term to the system and apply exponential integrators
to develop  first- and second-order linear schemes in time. We
show that both schemes simultaneously preserve the energy dissipation law and the MBP unconditionally
under  appropriate stabilizing constant.
In the error analysis, one of the  major difficulties is caused by the variable coefficients from the nonlinear reaction and the stabilization terms.
By using the energy dissipation law and the MBP,
these variable coefficients are shown to be bounded from above and below by  certain generic positive  constants, which helps us
to successfully prove the optimal temporal convergence under fixed spatial mesh.
To the best of our knowledge,
this is the first work providing a second-order linear numerical scheme for time integration of the model Allen--Cahn type gradient flows,
with provable unconditional  preservation of both the energy dissipation law and the MBP.

The  rest of this paper is organized as follows.
In Section \ref{sect_spacedis},
we present the spatial discretization with the central finite difference and some useful lemmas.
In Section \ref{sect_eisav}, we propose the first- and second-order generalized SAV-exponential integrator (GSAV-EI) schemes, and then
prove their unconditional preservation of both the energy dissipation law and the MBP, followed by their temporal convergence analysis.
Numerical experiments are carried out to validate the theoretical results and demonstrate the performance of  the proposed schemes in Section \ref{sect_experiment}.
Finally, some concluding remarks are given in Section \ref{sect_conclusion}.

\section{Spatial discretization and some preliminaries}
\label{sect_spacedis}

Throughout this paper,
we consider the two-dimensional square domain $\Omega=(0,L)\times(0,L)$
for the model equation \eqref{AllenCahn} with $f$ satisfying the assumption \eqref{assump}. Without loss of generality,  we impose the periodic boundary condition.
Extensions to the three-dimensional case and homogeneous Neumann boundary condition do not have any difficulties.
In this section, we will present some notations related to the spatial discretization
and a few preliminary lemmas for the analysis of the time integration schemes proposed later.


Given a positive integer $M$,
let $h=L/M$ be the size of the uniform mesh partitioning $\overline{\Omega}$,
and denote by $\Omega_h=\{(x_i,y_j)=(ih,jh)\,|\,1\le i,j\le M\}$ the set of mesh points.
For a grid function $v$ defined on $\Omega_h$, we denote $v_{ij}=v(x_i,y_j)$.
Let $\Mh$ be the set of all $M$-periodic grid functions on $\Omega_h$, i.e.,
$\Mh = \{v:\Omega_h\to\R \,|\, v_{i+kM,j+lM}=v_{ij}, \ k,l\in\Z, \ 1 \le i,j\le M\}$.
Let us apply the central finite difference method to approximate the spatial differential operators.
For any $v\in\Mh$, the discrete Laplace operator $\Delta_h$ is defined by
\[
\Delta_h v_{ij} = \frac{1}{h^2} (v_{i+1,j}+v_{i-1,j}+v_{i,j+1}+v_{i,j-1}-4v_{ij}), \quad 1 \le i,j \le M,
\]
and the discrete gradient operator $\nabla_h$ is defined by
\[
\nabla_h v_{ij} = \Big( \frac{v_{i+1,j}-v_{ij}}{h}, \frac{v_{i,j+1}-v_{ij}}{h}\Big)^T, \quad 1 \le i,j \le M.
\]
The eigenvalues of $\Delta_h$ are given by \cite{JuZhZhDu15}
\begin{equation}
\label{eig_laplace}
\lambda_{kl} = -\frac{4}{h^2} \Big(\sin^2\frac{k\pi}{M}+\sin^2\frac{l\pi}{M}\Big)\leq 0, \quad 0 \le k,l\le M-1.
\end{equation}
As usual, the discrete inner product $\<\cdot,\cdot\>$, the discrete $L^2$ norm $\|\cdot\|$,
and the discrete $L^\infty$ norm $\|\cdot\|_\infty$ can be defined respectively by
\[
\<v,w\> = h^2 \sum_{i,j=1}^M v_{ij} w_{ij}, \qquad
\|v\| = \sqrt{\<v,v\>}, \qquad
\|v\|_\infty = \max_{1\le i,j\le M} |v_{ij}|
\]
for any $v,w\in\Mh$, and
\[
\<\bv,\bw\> = \<v^1,w^1\> + \<v^2,w^2\>, \qquad
\|\bv\| = \sqrt{\<\bv,\bv\>}
\]
for any $\bv=(v^1,v^2)^T, \bw=(w^1,w^2)^T\in\Mh\times\Mh$.
By the periodicity, the summation-by-parts formula obviously holds:
\[
\<v,\Delta_hw\> = - \<\nabla_hv, \nabla_hw\> = \<\Delta_hv,w\>, \quad \forall\,v,w\in\Mh.
\]

The space-discrete problem corresponding to \eqref{AllenCahn} is then
to find a function $u_h:[0,\infty)\to\Mh$ satisfies
\begin{equation}
\label{semidis}
\daoshu{u_h}{t} = \eps^2\Delta_h u_h + f(u_h), \quad t > 0
\end{equation}
with $u_h(0)=\hat u_{{\rm init}}$, where $\hat u_{{\rm init}}$ is the pointwise projection of $\uinit$ onto $\Mh$. Throughout the paper, we do not differ $\hat u_{{\rm init}}$
and $u_{{\rm init}}$ anymore since there is no ambiguity.
It is easy to verify the energy dissipation law for the space-discrete problem \eqref{semidis}, i.e.,
$\daoshu{}{t} E_h(u_h(t)) \le 0$,
where $E_h$ is the spatially-discretized energy functional defined as
\begin{equation}
\label{dis_energy}
E_h(v) := \frac{\eps^2}{2} \|\nabla_h v\|^2 + \<F(v),1\>, \quad \forall \, v \in \Mh.
\end{equation}
As shown in \cite{DuJuLiQi21},
the MBP is also valid for the space-discrete problem \eqref{semidis}, i.e.,
\begin{equation}
\label{mbp}
\|u_{{\rm init}}\|_\infty \le \beta \qquad \Longrightarrow \qquad
\|u_h(t)\|_\infty \le \beta, \quad \forall \, t > 0.
\end{equation}


We have assumed that $f$ is continuously differentiable,
so $\|f'\|_{C[-\beta,\beta]}$ is finite.
Then the following result is valid \cite{DuJuLiQi21}.

\begin{lemma}
\label{lem_nonlinear}
Under the assumption \eqref{assump},
if $\kp\ge \|f'\|_{C[-\beta,\beta]}$ holds for some positive constant $\kp$,
then we have $|f(\xi)+\kp\xi|\le\kp\beta$ for any $\xi\in[-\beta,\beta]$.
\end{lemma}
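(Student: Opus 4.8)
The plan is to reduce the claim to a one-variable monotonicity argument. First I would introduce the auxiliary function $g(\xi) := f(\xi) + \kp\xi$ on the interval $[-\beta,\beta]$, so that the asserted bound becomes $|g(\xi)| \le \kp\beta$ for all $\xi \in [-\beta,\beta]$. Since $f$ is continuously differentiable, $g$ is as well, and $g'(\xi) = f'(\xi) + \kp$. The hypothesis $\kp \ge \|f'\|_{C[-\beta,\beta]}$ gives $f'(\xi) \ge -\kp$ pointwise on $[-\beta,\beta]$, hence $g'(\xi) \ge 0$ there; thus $g$ is nondecreasing on $[-\beta,\beta]$.

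Next I would use this monotonicity together with the sign condition \eqref{assump}. For any $\xi \in [-\beta,\beta]$ we have $g(-\beta) \le g(\xi) \le g(\beta)$. At the right endpoint, $g(\beta) = f(\beta) + \kp\beta \le \kp\beta$ because $f(\beta) \le 0$; at the left endpoint, $g(-\beta) = f(-\beta) - \kp\beta \ge -\kp\beta$ because $f(-\beta) \ge 0$. Combining these two inequalities yields $-\kp\beta \le g(\xi) \le \kp\beta$, i.e.\ $|f(\xi) + \kp\xi| \le \kp\beta$, which is exactly the assertion.

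There is essentially no obstacle here: the argument is elementary, the only ingredients being the mean-value/monotonicity consequence of the derivative bound and the endpoint sign condition built into \eqref{assump}. If one prefers to avoid invoking monotonicity via the derivative, the same conclusion follows by writing $g(\xi) = g(\beta) - \int_\xi^\beta g'(s)\,\d s$ and $g(\xi) = g(-\beta) + \int_{-\beta}^\xi g'(s)\,\d s$ and bounding the integrands by $g'(s) \ge 0$; I would expect the paper to simply state the monotonicity step without belaboring it.
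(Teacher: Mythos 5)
Your argument is correct and is exactly the standard proof: the paper itself omits the proof and cites \cite{DuJuLiQi21}, where the same monotonicity-plus-endpoint-sign argument for $\xi\mapsto f(\xi)+\kp\xi$ is used. Nothing to add.
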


Since $\Mh$ is a finite-dimensional linear space,
any grid function in $\Mh$ and any linear operator from $\Mh$ to $\Mh$
can be treated as a vector in $\R^{M^2}$ and a matrix in $\R^{M^2\times M^2}$, respectively.
For functions of matrix/operator, we have the following lemma (see \cite{Matfun08}).

\begin{lemma}
\label{lem_matfun}
Let $\phi$ be defined on the spectrum of a diagonalizable matrix $A\in\R^{m\times m}$, i.e.,
the values $\{\phi(\lambda_i)\}_{i=1}^m$ exist, where $\{\lambda_i\}_{i=1}^m$ are the eigenvalues of $A$.
Then
\begin{itemize}
\item [\rm (i)]   $\phi(A)$ commutes with $A$ and $\phi(A^T)=\phi(A)^T$;
\item [\rm (ii)]  the eigenvalues of $\phi(A)$ are $\{\phi(\lambda_i)\,|\,1\le i\le m\}$;
\item [\rm (iii)] $\phi(P^{-1}AP)=P^{-1}\phi(A)P$ for any nonsingular matrix $P\in\R^{m\times m}$.
\end{itemize}
\end{lemma}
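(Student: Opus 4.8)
The plan is to reduce all three assertions to one elementary fact: for a diagonalizable $A$, the matrix function $\phi(A)$ coincides with $p(A)$, where $p$ is any polynomial that interpolates $\phi$ on the spectrum of $A$. First I would list the \emph{distinct} eigenvalues $\mu_1,\dots,\mu_s$ of $A$ and let $p$ be the Lagrange interpolant of degree at most $s-1$ determined by $p(\mu_k)=\phi(\mu_k)$ for $1\le k\le s$. Writing $A=S\Lambda S^{-1}$ with $\Lambda=\operatorname{diag}(\lambda_1,\dots,\lambda_m)$, the definition of the primary matrix function (see \cite{Matfun08}) gives $\phi(A)=S\,\phi(\Lambda)\,S^{-1}=S\,\operatorname{diag}(\phi(\lambda_1),\dots,\phi(\lambda_m))\,S^{-1}$, and since $p$ agrees with $\phi$ at every $\lambda_i$ this equals $S\,p(\Lambda)\,S^{-1}=p(A)$. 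This identity is the workhorse; everything else is bookkeeping.

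Next I would dispatch part (ii): the factorization $\phi(A)=S\,\operatorname{diag}(\phi(\lambda_1),\dots,\phi(\lambda_m))\,S^{-1}$ exhibits $\phi(A)$ as similar to a diagonal matrix, so its eigenvalues are exactly $\{\phi(\lambda_i)\mid 1\le i\le m\}$. For part (i), since $\phi(A)=p(A)$ is a polynomial in $A$ it commutes with $A$; and from $(A^T)^k=(A^k)^T$ for all $k\ge0$ one obtains $p(A^T)=p(A)^T$, so it only remains to note that $A^T$ is diagonalizable with the same characteristic polynomial, hence the same spectrum, as $A$, which means the same $p$ represents $\phi$ on it; thus $\phi(A^T)=p(A^T)=p(A)^T=\phi(A)^T$. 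For part (iii), $P^{-1}AP$ is similar to $A$, hence diagonalizable with the same eigenvalues, so $\phi$ is again represented there by the very same $p$, and $(P^{-1}AP)^k=P^{-1}A^kP$ summed against the coefficients of $p$ gives $\phi(P^{-1}AP)=p(P^{-1}AP)=P^{-1}p(A)P=P^{-1}\phi(A)P$.

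The only point that genuinely deserves care — and the sole place where anything could go astray — is the well-definedness of $\phi(A)$, i.e., that $S\,\phi(\Lambda)\,S^{-1}$ does not depend on the chosen eigendecomposition. This, however, is precisely the content of the definition of a primary matrix function in \cite{Matfun08}, so I would simply invoke it rather than reprove it. Once that is granted, the proof is entirely routine and amounts to the manipulations sketched above.
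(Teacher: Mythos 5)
Your proof is correct. The paper itself offers no proof of this lemma---it simply cites \cite{Matfun08}---and your argument via the interpolating polynomial $p$ agreeing with $\phi$ on the spectrum is precisely the standard one given in that reference, so there is nothing to flag.
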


We still use the notations $\|\cdot\|$ and $\|\cdot\|_\infty$
to denote the matrix induced-norms consistent with $\|\cdot\|$ and $\|\cdot\|_\infty$ defined before, respectively.
By viewing $\Delta_h$ as a matrix,
we know that $\Delta_h$ is symmetric, negative semi-definite,
and weakly diagonally dominant with all diagonal entries negative.
Moreover, we have the following useful estimate,
which comes from the fact that
$\Delta_h$ is the generator of a contraction semigroup \cite{DuJuLiQi21},
and the proof can be found in \cite{DuJuLiQi21,JuLiQiYa21}.

\begin{lemma}
\label{lem_lapdiff}
For any real numbers $a\ge0$ and $b\ge0$, we have $\|\e^{a\Delta_h-b I}\|_\infty \le \e^{-b}$,
where $I\in\R^{M^2\times M^2}$ is the identity matrix.
\end{lemma}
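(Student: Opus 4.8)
The plan is to peel off the scalar part and reduce the statement to the single fact that the pure-diffusion propagator is an $\infty$-norm contraction, i.e. $\|\e^{a\Delta_h}\|_\infty\le1$ for every $a\ge0$. Since $bI$ commutes with $a\Delta_h$, Lemma \ref{lem_matfun} (or a direct power-series argument) gives $\e^{a\Delta_h-bI}=\e^{-b}\,\e^{a\Delta_h}$, whence $\|\e^{a\Delta_h-bI}\|_\infty=\e^{-b}\|\e^{a\Delta_h}\|_\infty$, and it remains to bound the second factor by $1$.

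To carry this out I would write $\Delta_h=\frac1{h^2}(B-4I)$, where $B\in\R^{M^2\times M^2}$ is the matrix collecting the four nearest-neighbor couplings under the $M$-periodic indexing of $\Mh$. Its entries are nonnegative, and by construction every row contains exactly four ones, so $B\bm{1}=4\bm{1}$ with $\bm{1}$ the all-ones vector. Because $B$ and $I$ commute, $\e^{a\Delta_h}=\e^{-4a/h^2}\,\e^{(a/h^2)B}$. Expanding $\e^{(a/h^2)B}=\sum_{k\ge0}\frac{(a/h^2)^k}{k!}B^k$ and noting that each $B^k$ is entrywise nonnegative with row sums exactly $4^k$ (from $B^k\bm{1}=4^k\bm{1}$), we get $\|B^k\|_\infty=4^k$ and hence $\|\e^{(a/h^2)B}\|_\infty\le\sum_{k\ge0}\frac{(a/h^2)^k}{k!}4^k=\e^{4a/h^2}$. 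Multiplying by the scalar prefactor yields $\|\e^{a\Delta_h}\|_\infty\le\e^{-4a/h^2}\e^{4a/h^2}=1$, which together with the first step gives the asserted bound $\|\e^{a\Delta_h-bI}\|_\infty\le\e^{-b}$.

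This lemma is elementary, and the only place any care is needed is the verification that $B$ has constant row sums equal to $4$ — this is precisely where the periodic boundary condition enters, since without it the boundary rows of $B$ would have fewer couplings; the rest is the standard fact that a nonnegative matrix has $\infty$-norm equal to its maximal row sum, plus the routine power-series manipulation. Alternatively, one may argue directly that $\Delta_h$ is a Metzler matrix with $\Delta_h\bm{1}=\bo$, so $\e^{a\Delta_h}$ is entrywise nonnegative with unit row sums (this is the ``contraction semigroup'' viewpoint alluded to in the references), but the explicit splitting above is self-contained and avoids invoking semigroup theory.
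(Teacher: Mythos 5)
Your proof is correct. Note that the paper does not actually prove this lemma---it only remarks that the bound follows from $\Delta_h$ generating a contraction semigroup and defers to \cite{DuJuLiQi21,JuLiQiYa21}; the argument given there is essentially the one you wrote out, so you have in effect supplied the missing self-contained proof rather than a different one. The splitting $\Delta_h=\frac{1}{h^2}(B-4I)$ with $B\ge0$ entrywise and $B\bm{1}=4\bm{1}$, the factorization $\e^{a\Delta_h-bI}=\e^{-b}\e^{-4a/h^2}\e^{(a/h^2)B}$ (valid since everything commutes with $I$), and the row-sum computation $\|B^k\|_\infty=4^k$ are all sound, and your closing remark correctly identifies the equivalent Metzler-matrix/semigroup viewpoint. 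The only microscopic imprecision is the claim that every row of $B$ contains ``exactly four ones'': for very small $M$ (e.g.\ $M=1$ or $M=2$) periodic neighbors coincide and some entries of $B$ equal $2$ or $4$ rather than $1$; but the properties you actually use---entrywise nonnegativity and constant row sums equal to $4$---hold in all cases, so the proof is unaffected.
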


{
\begin{remark}
Apart from the central difference discretization discussed above,
the lumped mass finite element method with piecewise linear basis functions can also be adopted
and Lemma \ref{lem_lapdiff} still holds correspondingly \cite{DuJuLiQi21}.
In addition, there have been some initial explorations on the MBP-preserving methods
using the fourth-order accurate spatial discretization,
such as the compact difference approximation \cite{TianJiLu18}
and the finite difference formulation of the $Q^2$ spectral element method \cite{ShenZh21},
combined with the Euler-type time-stepping approaches.
However, it is not obvious that
whether these fourth-order discrete Laplace operators satisfy Lemma \ref{lem_lapdiff},
and thus it is worthy of further investigations on the combination of  higher-order spatial discretizations
with the exponential integrator methods studied in this paper.
\end{remark}
}

\section{Generalized SAV-exponential integrator schemes}
\label{sect_eisav}

From now on, we always assume the initial value $\uinit$ has the enough regularity as needed.
Let us define the bulk energy term $E_{1h}(v) := \<F(v),1\>$ for any $v \in \Mh$.
The continuity of $F$ implies that $F$ is bounded from below on $[-\beta,\beta]$.
Therefore, according to the MBP \eqref{mbp},
there exist two constants $C_*\ge 0$ and $C^*\ge 0$ such that
\begin{equation}
\label{energy_bounds}
-C_* \le E_{1h}(u_h) \le C^*.
\end{equation}
Motivated by the idea of the generalized SAV approach \cite{ChengLiSh21},
we define the auxiliary variable $s_h(t) = E_{1h}(u_h(t))$, and rewrite the space-discrete  equation \eqref{semidis} in an alternate but equivalent form as below:
\begin{subequations}
\label{eq_semidis}
\begin{align}
\daoshu{u_h}{t} & = \eps^2\Delta_h u_h + \frac{\sigma(s_h)}{\sigma(E_{1h}(u_h))} f(u_h), \label{eq_semidisa} \\
\daoshu{s_h}{t} & = - \frac{\sigma(s_h)}{\sigma(E_{1h}(u_h))} \Big\<f(u_h),\daoshu{u_h}{t}\Big\>, \label{eq_semidisb}
\end{align}
\end{subequations}
where $\sigma:\R\to\R$ is a one-variable function satisfying the following  two conditions:
\begin{itemize}
\item[$(\Sigma_1)$] $\sigma>0$ on $\R$;
\item[$(\Sigma_2)$] $\sigma$ is continuously differentiable and $\sigma'\ge0$ on $\R$.
\end{itemize}
These conditions are crucial to the MBP preservation and error analysis of the proposed time integration schemes in this paper.
For any $v\in\Mh$ and $r\in\R$,  define
\begin{equation}
\label{g_def}
g(v,r): = \frac{\sigma(r)}{\sigma(E_{1h}(v))}
\end{equation}
(clearly $g(v,r)>0$ due to the condition $(\Sigma_1)$) and the modified energy
\begin{equation}
\label{modified_energy}
\hE_h(v,r) := \frac{\eps^2}{2} \|\nabla_h v\|^2 + r.
\end{equation}
Obviously, $g(u_h,s_h)\equiv1$ and $\hE_h(u_h,s_h)\equiv E_h(u_h)$ without time discretization.

\begin{remark}
\label{sgchoice}
The form \eqref{eq_semidis} and the conditions $(\Sigma_1)$ and $(\Sigma_2)$
differs from those given in \cite{ChengLiSh21},
where $\sigma$ may not be defined on the whole real line $\R$
and the constructed schemes may be nonlinear,
while our schemes developed later are all linear.
A trivial  choice for the function $\sigma$ satisfying $(\Sigma_1)$ and $(\Sigma_2)$
is the positive constant mapping, e.g., $\sigma\equiv1$. This gives a degenerate case since we still have exactly $g(u_h,s_h)\equiv 1$ with whatever time discretization and consequently $s_h$ does not provide any feedback  to the update of $u_h$ at each time step,  which is similar to the idea adopted in \cite{QiaoSuZh15}.
Some nontrivial choices \cite{ChengLiSh21} include elementary functions such as
$\sigma(x) = \e^x$, $\sigma(x) = \frac{\pi}{2} + \arctan(x)$,  $\sigma(x) = 1 + \tanh(x)$,
or  even special functions constructed by the integration such as
$\sigma(x) = \int_{-\infty}^x \eta(y) \, \d y$
for some continuous function $\eta\geq 0$  on $\R$.
\end{remark}

Next we develop  exponential integrators for  the space-discrete system \eqref{eq_semidis}, instead of the original one \eqref{semidis}.
Let us partition the time interval using the nodes $\{t_n=n\dt\}_{n\ge0}$ with  a uniform time step size $\dt>0$,
and set $u^n$ and $s^n$ as the approximations of $u_{h,e}(t_n)$ and $s_{h,e}(t_n)=E_{1h}(u_{h,e}(t_n))$ respectively, where
$u_{h,e}$ denotes the exact solution to the problem \eqref{semidis} (equivalently \eqref{eq_semidis}).

\subsection{First-order GSAV-EI scheme}

Set $u^0=\uinit$ and $s^0=E_{1h}(u^0)$.
Suppose the numerical solution $(u^n,s^n)$ is known for some $n\ge0$.
Introducing a positive stabilizing constant $\kp>0$,
the equation \eqref{eq_semidisa} is equivalent to
\begin{align*}
\daoshu{u_h}{t} & = \eps^2 \Delta_h u_h + g(u_h,s_h) f(u_h) + \kp g(u^{n}, s^{n}) (u_h-u_h) \\
& = - L_\kp^n u_h + N_\kp^n(u_h,s_h),
\end{align*}
where the linear operator $L_\kp^n=\kp g(u^n,s^n) I-\eps^2\Delta_h$ and the nonlinear operator
\[
N_\kp^n(v,r) = g(v,r) f(v) + \kp g(u^n,s^n) v, \quad \forall \, v \in \Mh, \ \forall \, r \in \R.
\]
We know that $L_\kp^n$ is self-adjoint and positive definite since $g(u^n,s^n)>0$ and $\kappa>0$.
Using the variation-of-constants formula on $[t_n,t_{n+1}]$, we have
\begin{equation*}
u_h(t_{n+1}) = \e^{-\dt L_\kp^n} u_h(t_n) + \int_0^\dt \e^{-(\dt-\theta)L_\kp^n}
N_\kp^n(u_h(t_n+\theta),s_h(t_n+\theta)) \, \d \theta.
\end{equation*}
By approximating the term $N_\kp^n$ by its value at $\theta=0$, i.e.,
$N_\kp^n(u_h(t_n+\theta),s_h(t_n+\theta))\approx N_\kp^n(u_h(t_n),s_h(t_n))$,
we obtain the first-order exponential integrator scheme for computing $u^{n+1}$ as
\begin{subequations}
\label{eq_eisav1}
\begin{align}
u^{n+1} & = \e^{-\dt L_\kp^n} u^n + \bigg(\int_0^\dt \e^{-(\dt-\theta)L_\kp^n} \,\d \theta\bigg) N_\kp^n(u^n,s^n) \label{eq_eisav1a} \\
& = \e^{-\dt L_\kp^n} u^n + \dt \phi_1(-\dt L_\kp^n) N_\kp^n(u^n,s^n), \nn
\end{align}
where $\phi_1(a)=a^{-1}(\e^a-1)$ for $a\not=0$.
Integrating \eqref{eq_semidisb} from $t_n$ to $t_{n+1}$
and using the approximation
$g(u_h(t_n+t),s_h(t_n+t)) f(u_h(t_n+t))
\approx g(u_h(t_n),s_h(t_n)) f(u_h(t_n))$,
we obtain the first-order formula for computing $s^{n+1}$ as
\begin{equation}
\label{eq_eisav1b}
s^{n+1} = s^n - g(u^n,s^n)\<f(u^n), u^{n+1}-u^n\>.
\end{equation}
\end{subequations}
The combination of \eqref{eq_eisav1a} and \eqref{eq_eisav1b} defines the first-order generalized SAV-exponential integrator (GSAV-EI1) scheme,
which  is unique solvable for any $\dt>0$ due to its explicit formulation.

\subsubsection{Energy dissipation and MBP}

We first show the unconditional preservation of
the energy dissipation law with respect to the modified energy $\hE_h$ defined by \eqref{modified_energy}  and the MBP of the GSAV-EI1 scheme \eqref{eq_eisav1}.
As a consequence, we then prove the uniform boundedness of  $g(u^n,s^n)$,
which is crucial to  the convergence analysis.

\begin{theorem}[Energy dissipation of GSAV-EI1]
\label{thm_eisav1_es}
The GSAV-EI1 scheme \eqref{eq_eisav1} is unconditionally energy dissipative
in the time-discrete sense that  $\hE_h(u^{n+1},s^{n+1})\le \hE_h(u^n,s^n)$ holds for any  $\dt>0$ and $n\ge 0$.
\end{theorem}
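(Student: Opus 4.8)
The plan is to reduce the one-step change of $\hE_h$ to a single quadratic form in the increment $w := u^{n+1}-u^n$, and then to control that form via the spectral structure of $L_\kp^n$.

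First I would combine the two update formulas. From the definition \eqref{modified_energy} of $\hE_h$ and the update \eqref{eq_eisav1b} for $s^{n+1}$,
\[
\hE_h(u^{n+1},s^{n+1}) - \hE_h(u^n,s^n)
= \frac{\eps^2}{2}\big(\|\nabla_h u^{n+1}\|^2 - \|\nabla_h u^n\|^2\big) - g(u^n,s^n)\<f(u^n),w\>,
\]
and, expanding $\|\nabla_h u^{n+1}\|^2=\|\nabla_h(u^n+w)\|^2$,
\[
\frac{\eps^2}{2}\big(\|\nabla_h u^{n+1}\|^2 - \|\nabla_h u^n\|^2\big)
= \eps^2\<\nabla_h u^n,\nabla_h w\> + \frac{\eps^2}{2}\|\nabla_h w\|^2 .
\]
So the whole argument hinges on rewriting $g(u^n,s^n)\<f(u^n),w\>$ by means of the scheme.

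Next I would reformulate \eqref{eq_eisav1a}. Since $\dt\phi_1(-\dt L_\kp^n)=(L_\kp^n)^{-1}(I-\e^{-\dt L_\kp^n})$ and $L_\kp^n$ is self-adjoint and positive definite (because $g(u^n,s^n)>0$ and $\kp>0$), subtracting $u^n$ from \eqref{eq_eisav1a} and using that all functions of $L_\kp^n$ commute (Lemma \ref{lem_matfun}(i)), together with the identity $N_\kp^n(u^n,s^n)-L_\kp^n u^n = g(u^n,s^n)f(u^n)+\eps^2\Delta_h u^n$, gives
\[
w = (I-\e^{-\dt L_\kp^n})(L_\kp^n)^{-1}\big(g(u^n,s^n)f(u^n)+\eps^2\Delta_h u^n\big),
\]
hence
\[
g(u^n,s^n)f(u^n) = L_\kp^n(I-\e^{-\dt L_\kp^n})^{-1}w - \eps^2\Delta_h u^n .
\]
Taking the discrete inner product with $w$, applying summation by parts to the $\Delta_h$ term, and inserting this back, the two copies of $\eps^2\<\nabla_h u^n,\nabla_h w\>$ cancel and I arrive at
\[
\hE_h(u^{n+1},s^{n+1}) - \hE_h(u^n,s^n)
= \frac{\eps^2}{2}\|\nabla_h w\|^2 - \<L_\kp^n(I-\e^{-\dt L_\kp^n})^{-1}w,\,w\> .
\]

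Finally I would estimate the quadratic form. The operator $L_\kp^n(I-\e^{-\dt L_\kp^n})^{-1}$ is a function of the symmetric positive definite operator $L_\kp^n$, so by Lemma \ref{lem_matfun}(ii) it is symmetric with eigenvalues $\mu/(1-\e^{-\dt\mu})$, one for each eigenvalue $\mu>0$ of $L_\kp^n$. Since $\mu/(1-\e^{-\dt\mu})\ge\mu$ for every $\mu>0$ and every $\dt>0$, we obtain the operator ordering $L_\kp^n(I-\e^{-\dt L_\kp^n})^{-1}\succeq L_\kp^n = \kp g(u^n,s^n)I-\eps^2\Delta_h\succeq-\eps^2\Delta_h$, the last inequality because $\kp g(u^n,s^n)>0$ and $-\Delta_h$ is positive semi-definite. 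Therefore $\<L_\kp^n(I-\e^{-\dt L_\kp^n})^{-1}w,w\>\ge\eps^2\|\nabla_h w\|^2$, and consequently
\[
\hE_h(u^{n+1},s^{n+1}) - \hE_h(u^n,s^n)\le -\frac{\eps^2}{2}\|\nabla_h w\|^2\le 0
\]
for all $\dt>0$ and $n\ge0$, which is the assertion. The genuinely routine part is the summation-by-parts bookkeeping; the steps that require care are the exact algebraic conversion of the exponential-integrator update into the identity for $g(u^n,s^n)f(u^n)$ in terms of $w$ (where commutativity of functions of $L_\kp^n$, Lemma \ref{lem_matfun}, is essential) and the spectral comparison delivering the operator inequality — this is where I expect the main obstacle to lie.
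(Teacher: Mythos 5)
Your proof is correct and follows essentially the same route as the paper: both compute the one-step energy difference, use the exponential-integrator update to express the relevant combination in terms of the increment $w=u^{n+1}-u^n$ via the operator $(I-\e^{-\dt L_\kp^n})^{-1}L_\kp^n$, and conclude from the scalar inequality $\mu/(1-\e^{-\dt\mu})\ge\mu$ together with the symmetry and positive definiteness of $L_\kp^n$. The only difference is bookkeeping — the paper isolates the negative-definite operator $L_\kp^n-(I-\e^{-\dt L_\kp^n})^{-1}L_\kp^n$ and keeps the $\kp g\|w\|^2$ and $\frac{\eps^2}{2}\|\nabla_h w\|^2$ terms separate, while you absorb everything into the single comparison $L_\kp^n(I-\e^{-\dt L_\kp^n})^{-1}\succeq L_\kp^n\succeq-\eps^2\Delta_h$; the two final expressions are algebraically identical.
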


\begin{proof}
Using \eqref{eq_eisav1b}, some simple calculations yield
\begin{align}
& \hE_h(u^{n+1},s^{n+1}) - \hE_h(u^n,s^n)
 = \frac{\eps^2}{2} \|\nabla_h u^{n+1}\|^2 - \frac{\eps^2}{2} \|\nabla_h u^n\|^2 + s^{n+1}-s^n
 \label{thm_eisav1es_pf1}\\
& \qquad = \eps^2\<\nabla_h u^{n+1},\nabla_h u^{n+1}-\nabla_h u^n\> - \frac{\eps^2}{2}\|\nabla_h u^{n+1}-\nabla_h u^n\|^2 \nn \\
& \qquad\quad - g(u^n,s^n)\<f(u^n), u^{n+1}-u^n\> \nn \\
& \qquad = -\<\eps^2\Delta_h u^{n+1}+g(u^n,s^n)f(u^n),u^{n+1}-u^n\>
- \frac{\eps^2}{2}\|\nabla_h u^{n+1}-\nabla_h u^n\|^2 \nn \\
& \qquad = \<L_\kp^n u^{n+1}-N_\kp^n(u^n,s^n),u^{n+1}-u^n\> \nn \\
& \qquad\quad - \kp g(u^n,s^n)\|u^{n+1}-u^n\|^2 - \frac{\eps^2}{2}\|\nabla_h u^{n+1}-\nabla_h u^n\|^2. \nn
\end{align}
It can also be derived from \eqref{eq_eisav1a} that
\[
u^{n+1}-u^n = (\e^{-\dt L_\kp^n}-I) u^n + \dt \phi_1(-\dt L_\kp^n) N_\kp^n(u^n,s^n).
\]
Multiplying $[\dt\phi_1(-\dt L_\kp^n)]^{-1}=(I-\e^{-\dt L_\kp^n})^{-1}L_\kp^n$ on both sides of the above equation leads to
\begin{align*}
[\dt\phi_1(-\dt L_\kp^n)]^{-1}(u^{n+1}-u^n)
& = -L_\kp^n u^n + N_\kp^n(u^n,s^n) \\
& = L_\kp^n(u^{n+1}-u^n) - L_\kp^n u^{n+1} + N_\kp^n(u^n,s^n),
\end{align*}
and thus,
\begin{equation}
\label{thm_eisav1es_pf2}
L_\kp^n u^{n+1} - N_\kp^n(u^n,s^n) = [L_\kp^n-(I-\e^{-\dt L_\kp^n})^{-1}L_\kp^n](u^{n+1}-u^n).
\end{equation}
Note that $L_\kp^n$ is positive definite and $a-(1-\e^{-a})^{-1}a<0$ for any $a>0$,
which means, by Lemma \ref{lem_matfun}, that $L_\kp^n-(I-\e^{-\dt L_\kp^n})^{-1}L_\kp^n$ is negative definite.
Combining \eqref{thm_eisav1es_pf1} and \eqref{thm_eisav1es_pf2},
we obtain the energy dissipation $\hE_h(u^{n+1},s^{n+1})\le \hE_h(u^n,s^n)$ .
\end{proof}

\begin{remark}
Theorem \ref{thm_eisav1_es} states that
the GSAV-EI1 scheme \eqref{eq_eisav1} is energy dissipative
with respect to the modified energy $\hE_h(u^n,s^n)$ rather than the original energy $E_h(u^n)$.
Note that $\hE_h(u^n,s^n)$ is only an approximation of $E_h(u^n)$ after time discretization since usually $s^n\not=E_{1h}(u^n)$ for $n> 0$.
\end{remark}

\begin{corollary}
\label{cor_sav1_es}
For any  $\dt>0$ and $n\ge 0$, it holds that $s^n\le E_h(\uinit)$.
\end{corollary}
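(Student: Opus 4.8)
The plan is to simply iterate the one-step energy dissipation inequality established in Theorem~\ref{thm_eisav1_es}. Applying $\hE_h(u^{k+1},s^{k+1})\le\hE_h(u^k,s^k)$ successively for $k=0,1,\dots,n-1$ gives, for every $n\ge0$ and every $\dt>0$,
\[
\hE_h(u^n,s^n)\le\hE_h(u^0,s^0).
\]

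Next I would evaluate the right-hand side at the initial data. Since the scheme is initialized with $u^0=\uinit$ and $s^0=E_{1h}(u^0)=\<F(\uinit),1\>$, the definition \eqref{modified_energy} of $\hE_h$ combined with the definition \eqref{dis_energy} of $E_h$ yields
\[
\hE_h(u^0,s^0)=\frac{\eps^2}{2}\|\nabla_h\uinit\|^2+\<F(\uinit),1\>=E_h(\uinit).
\]
Then I would expand the left-hand side by \eqref{modified_energy}, namely $\hE_h(u^n,s^n)=\frac{\eps^2}{2}\|\nabla_h u^n\|^2+s^n$, and discard the nonnegative term $\frac{\eps^2}{2}\|\nabla_h u^n\|^2\ge0$ to obtain $s^n\le\hE_h(u^n,s^n)\le E_h(\uinit)$, which is the claim.

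There is essentially no obstacle in this argument; it is a direct corollary. The only point requiring a moment of attention is recognizing that the modified energy at step $n=0$ coincides \emph{exactly} with the original discrete energy $E_h(\uinit)$, which is precisely because the auxiliary variable is initialized as $s^0=E_{1h}(u^0)$ rather than by some approximation.
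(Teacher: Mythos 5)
Your argument is correct and is essentially identical to the paper's proof: iterate the dissipation inequality of Theorem~\ref{thm_eisav1_es}, note that $\hE_h(u^0,s^0)=E_h(\uinit)$ because $s^0=E_{1h}(\uinit)$, and drop the nonnegative gradient term. Nothing further is needed.
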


\begin{proof}
Since $s^0=E_{1h}(\uinit)$, we have by Theorem \ref{thm_eisav1_es} that
\[
\frac{\eps^2}{2} \|\nabla_h u^n\|^2 + s^n = \hE_h(u^n,s^n) \le \hE_h(u^{n-1},s^{n-1})
\le \cdots \le \hE_h(u^0,s^0)= E_h(\uinit).
\]
Dropping off the nonnegative term $\frac{\eps^2}{2} \|\nabla_h u^n\|^2$  leads to the expected result.
\end{proof}

\begin{theorem}[MBP of GSAV-EI1]
\label{thm_eisav1_mbp}
If $\kp\ge \|f'\|_{C[-\beta,\beta]}$, then
the GSAV-EI1 scheme \eqref{eq_eisav1} preserves the MBP unconditionally, i.e.,
for any $\dt>0$, the time-discrete version of \eqref{mbp} is valid as follows:
\begin{equation}
\label{dismbp}
\|\uinit\|_\infty \le \beta \qquad \Longrightarrow \qquad
\|u^n\|_\infty \le \beta, \quad \forall \, n\ge0.
\end{equation}
\end{theorem}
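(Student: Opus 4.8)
The plan is to prove \eqref{dismbp} by induction on $n$, the base case $n=0$ being the hypothesis $\|u^0\|_\infty=\|\uinit\|_\infty\le\beta$. So fix $n\ge0$, assume $\|u^n\|_\infty\le\beta$, and abbreviate $\gamma_n:=\kp\,g(u^n,s^n)$, which is strictly positive by condition $(\Sigma_1)$, so that $L_\kp^n=\gamma_n I-\eps^2\Delta_h$. Since $\gamma_n I$ and $-\eps^2\Delta_h$ commute, $\e^{-\dt L_\kp^n}=\e^{-\dt\gamma_n}\,\e^{\dt\eps^2\Delta_h}$, and Lemma \ref{lem_lapdiff} (applied with $a=\dt\eps^2\ge0$, $b=\dt\gamma_n\ge0$) gives $\|\e^{-\dt L_\kp^n}\|_\infty\le\e^{-\dt\gamma_n}$. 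For the second operator I would use the integral representation already recorded in \eqref{eq_eisav1a}, namely $\dt\,\phi_1(-\dt L_\kp^n)=\int_0^\dt\e^{-(\dt-\theta)L_\kp^n}\,\d\theta$; taking the induced $\|\cdot\|_\infty$ norm inside the (matrix-valued) integral and using Lemma \ref{lem_lapdiff} on each integrand yields $\|\dt\,\phi_1(-\dt L_\kp^n)\|_\infty\le\int_0^\dt\e^{-(\dt-\theta)\gamma_n}\,\d\theta=\gamma_n^{-1}\bigl(1-\e^{-\dt\gamma_n}\bigr)$.

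Next I would bound the nonlinear term. Rewriting $N_\kp^n(u^n,s^n)=g(u^n,s^n)\bigl(f(u^n)+\kp u^n\bigr)$ and invoking Lemma \ref{lem_nonlinear} with the hypothesis $\kp\ge\|f'\|_{C[-\beta,\beta]}$ and the inductive bound $\|u^n\|_\infty\le\beta$, one gets $|f(u^n_{ij})+\kp u^n_{ij}|\le\kp\beta$ at every grid point, hence $\|N_\kp^n(u^n,s^n)\|_\infty\le\kp\beta\,g(u^n,s^n)=\gamma_n\beta$. Substituting the three estimates into the update formula \eqref{eq_eisav1a} and using the triangle inequality,
\[
\|u^{n+1}\|_\infty\le\e^{-\dt\gamma_n}\,\beta+\gamma_n^{-1}\bigl(1-\e^{-\dt\gamma_n}\bigr)\cdot\gamma_n\beta=\beta,
\]
which closes the induction for every $\dt>0$.

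The only delicate point is the exact cancellation of the factor $g(u^n,s^n)$: it appears in $\gamma_n$ (through $L_\kp^n$) and in $N_\kp^n(u^n,s^n)$ in matching ways, so that the contraction constant $\e^{-\dt\gamma_n}$ and the amplitude $\gamma_n^{-1}(1-\e^{-\dt\gamma_n})\,\gamma_n\beta$ combine to exactly $\beta$ with no constraint on $\dt$. This is precisely why the stabilization is introduced with coefficient $\kp\,g(u^n,s^n)$ rather than a bare constant $\kp$, and I do not anticipate any other obstacle once $(\Sigma_1)$ is used to guarantee $\gamma_n>0$ so that the operator bounds above are legitimate.
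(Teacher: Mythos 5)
Your proof is correct and follows essentially the same route as the paper's: the same induction, the same bound $\|\e^{-\dt L_\kp^n}\|_\infty\le\e^{-\dt\kp g(u^n,s^n)}$ from Lemma \ref{lem_lapdiff}, the same use of Lemma \ref{lem_nonlinear} to get $\|N_\kp^n(u^n,s^n)\|_\infty\le\kp\beta\,g(u^n,s^n)$, and the same exact cancellation in the integral of the semigroup bound. No gaps.
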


\vskip -0.4cm
\begin{proof}
Suppose $(u^n,s^n)$ is given and $\|u^n\|_\infty\le\beta$ for some $n\ge 0$.
By Lemma \ref{lem_lapdiff}, we get
$\|\e^{-\dt L_\kp^n}\|_\infty \le \e^{-\dt\kp g(u^n,s^n)}$.
Since $\kp\ge \|f'\|_{C[-\beta,\beta]}$ and $g(u^n,s^n)>0$,
by Lemma \ref{lem_nonlinear}, we have
\[
\|N_\kp^n(u^n,s^n)\|_\infty = g(u^n,s^n) \|f(u^n)+\kp u^n\|_\infty \le \kp \beta g(u^n,s^n).
\]
Therefore, we obtain from \eqref{eq_eisav1a} that
\begin{align*}
\|u^{n+1}\|_\infty
& \le \e^{-\dt\kp g(u^n,s^n)} \beta
+ \bigg(\int_0^\dt \e^{-(\dt-\theta)\kp g(u^n,s^n)} \,\d \theta\bigg) \cdot \kp\beta g(u^n,s^n) \\
& = \e^{-\dt\kp g(u^n,s^n)} \beta + \frac{1-\e^{-\dt\kp g(u^n,s^n)}}{\kp g(u^n,s^n)} \cdot \kp\beta g(u^n,s^n)
= \beta.
\end{align*}
By induction, we have $\|u^n\|_\infty\le\beta$ for any $n\ge0$.
\end{proof}

\begin{remark}
\label{rmk_sESAV1}
By applying $\e^{-\dt L_\kp^n}\approx(I+\dt L_\kp^n)^{-1}$ to  \eqref{eq_eisav1a}, we can obtain
\begin{equation}
\label{GSAV1}
\frac{u^{n+1} - u^n}{\dt} = \eps^2 \Delta_h u^{n+1} + g(u^n,s^n) f(u^n) + \kp g(u^n,s^n) (u^{n+1}-u^n).
\end{equation}
The scheme formed by \eqref{GSAV1} and \eqref{eq_eisav1b}  can be regarded as the stabilizing version of
the generalized-SAV scheme \cite{ChengLiSh21}, which also satisfies
the energy dissipation law (Theorem \ref{thm_eisav1_es}) and the MBP (Theorem \ref{thm_eisav1_mbp}).
In particular, by setting $\sigma(x)=\e^x$, it  recovers exactly the first-order stabilized exponential-SAV scheme \cite{JuLiQi21}.
\end{remark}

Unlike the time-continuous case in which $g(u_h,s_h)\equiv1$,
the coefficient $g(u^n,s^n)$ may vary at each time step unless $\sigma$ is chosen as a constant function,
which lead to some difficulties for the error analysis.
Fortunately, by the energy dissipation law and the MBP,
we can show that $g(u^n,s^n)$ is bounded uniformly in $n$.
The following lemma (without proof) is useful to estimate some exponential-related functions of matrices.

\begin{lemma}
\label{lem_expfuns}
For any $a>0$, the following inequalities hold:
\[
0 < 1 - \e^{-a} < a, \qquad
0 < \phi_1(-a) < 1, \qquad
1 < (1+a) \phi_1(-a) < 2. 
\]
\end{lemma}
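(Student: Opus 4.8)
The plan is to reduce all three pairs of inequalities to the single elementary fact that $\e^x > 1+x$ for every $x\neq 0$, after first rewriting $\phi_1(-a) = (1-\e^{-a})/a$ (equivalently $\phi_1(-a) = \int_0^1 \e^{-a\theta}\,\d\theta$, which also makes positivity transparent).

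First I would dispatch $0 < 1 - \e^{-a} < a$: since $a>0$ we have $\e^{-a} < 1$, which gives the left-hand inequality, while applying $\e^x > 1+x$ at $x=-a$ gives $\e^{-a} > 1-a$, i.e. $1-\e^{-a} < a$, the right-hand one. Dividing this chain through by $a>0$ immediately yields $0 < \phi_1(-a) < 1$, so the first two lines of the lemma come for free.

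For the two-sided bound $1 < (1+a)\phi_1(-a) < 2$, which equals $(1+a)(1-\e^{-a})/a$, I would treat the two sides separately. The lower bound $(1+a)\phi_1(-a) > 1$ is equivalent, after multiplying by $a>0$ and simplifying, to $(1+a)\e^{-a} < 1$, i.e. to $\e^a > 1+a$, which again holds for $a>0$. For the upper bound, the inequality $(1+a)\phi_1(-a) < 2$ rearranges to $(1+a)\e^{-a} > 1-a$; here I would introduce the auxiliary function $h(a) := (1+a)\e^{-a} - 1 + a$, note $h(0) = 0$, and compute $h'(a) = 1 - a\e^{-a}$. Since $a\e^{-a}$ attains its maximum $\e^{-1} < 1$ at $a=1$, we have $h'(a) > 0$ for all $a\ge 0$, so $h$ is strictly increasing on $[0,\infty)$ and therefore $h(a) > 0$ for $a>0$, which is exactly the claimed upper bound.

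The only step that needs anything beyond the bare inequality $\e^x>1+x$ is this last upper bound, and the mild obstacle there is recognizing that the auxiliary function $h$ is monotone, which itself rests on the elementary observation that $a\e^{-a}$ never exceeds $1$. Everything else is a one-line manipulation.
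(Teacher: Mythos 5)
Your proof is correct and complete; the paper states this lemma without proof, so there is no argument to compare against. Every step checks: the first two chains follow from $\e^{-a}<1$ and $\e^{-a}>1-a$, the lower bound of the third reduces to $\e^a>1+a$, and your monotonicity argument for $h(a)=(1+a)\e^{-a}-1+a$ is sound because $h'(a)=1-a\e^{-a}>0$ follows from $\max_{a\ge0}a\e^{-a}=\e^{-1}<1$. As a minor simplification, the last upper bound also follows directly from $\e^{-a}>1-a$: for $0<a\le1$ this gives $(1+a)\e^{-a}>(1+a)(1-a)=1-a^2\ge1-a$, while for $a>1$ the inequality $(1+a)\e^{-a}>1-a$ is trivial since the right-hand side is negative; this avoids the auxiliary function entirely.
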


\vskip -0.4cm

\begin{corollary}
\label{cor_g_bound}
Given any fixed  $h>0$ and $T>0$.
If $\kp\ge\|f'\|_{C[-\beta,\beta]}$ and $\|\uinit\|_\infty\le\beta$, then
there are two constants $G_*>0$ and $G^*>0$ such that
\[
G_* \le g(u^n,s^n) \le G^*, \quad 0 \le n \le \lfloor T/\dt \rfloor,
\]
where $G_*$ and $G^*$ depend on $C_*$, $C^*$, $|\Omega|$, $T$, $\uinit$, $\kp$, $\eps$, and $\|f\|_{C[-\beta,\beta]}$,
but are independent of $\dt$.
\end{corollary}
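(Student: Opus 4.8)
The plan is to bound the numerator and denominator of $g(u^n,s^n)=\sigma(s^n)/\sigma(E_{1h}(u^n))$ separately, obtaining the upper bound first (cheaply, from the energy dissipation law) and then using it to get the lower bound. The denominator is the easy part: by Theorem~\ref{thm_eisav1_mbp} we have $\|u^n\|_\infty\le\beta$ for all $n$, so exactly as in \eqref{energy_bounds} the continuity of $F$ on the compact interval $[-\beta,\beta]$ gives $-C_*\le E_{1h}(u^n)\le C^*$, and since $\sigma$ is positive and nondecreasing (conditions $(\Sigma_1)$--$(\Sigma_2)$) this yields $0<\sigma(-C_*)\le\sigma(E_{1h}(u^n))\le\sigma(C^*)$. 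Hence it suffices to bound $s^n$ from above and below by $\dt$-independent constants.

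The upper bound comes for free: Corollary~\ref{cor_sav1_es} gives $s^n\le E_h(\uinit)$, so by monotonicity $\sigma(s^n)\le\sigma(E_h(\uinit))$, and one may take $G^*:=\sigma(E_h(\uinit))/\sigma(-C_*)$, which is independent of $\dt$.

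For the lower bound on $s^n$ I would first estimate the one-step increment of $u^n$. Using the identity $\e^{-a}=1+a\,\phi_1(-a)$ with $a=-\dt L_\kp^n$, the update \eqref{eq_eisav1a} can be rewritten as $u^{n+1}-u^n=\dt\,\phi_1(-\dt L_\kp^n)\bigl(\eps^2\Delta_h u^n+g(u^n,s^n)f(u^n)\bigr)$. Since $L_\kp^n$ is self-adjoint and positive definite, Lemmas~\ref{lem_matfun} and \ref{lem_expfuns} give $\|\phi_1(-\dt L_\kp^n)\|\le 1$; combining this with $\|\Delta_h\|\le 8/h^2$ (from \eqref{eig_laplace}), the MBP bounds $\|u^n\|\le\beta\sqrt{|\Omega|}$ and $\|f(u^n)\|\le\|f\|_{C[-\beta,\beta]}\sqrt{|\Omega|}$, and the upper bound $g(u^n,s^n)\le G^*$ just obtained, one gets $\|u^{n+1}-u^n\|\le C_1\dt$ for a $\dt$-independent constant $C_1$ (depending on $\eps,h,\beta,|\Omega|,G^*,\|f\|_{C[-\beta,\beta]}$). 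Plugging this and $g(u^n,s^n)\le G^*$ into \eqref{eq_eisav1b} and applying the Cauchy--Schwarz inequality yields $|s^{n+1}-s^n|\le C_2\dt$; summing over $n\le\lfloor T/\dt\rfloor$ and using $n\dt\le T$ gives $s^n\ge s^0-C_2T\ge E_{1h}(\uinit)-C_2T\ge -C_*-C_2T=:s_{\min}$. Then $\sigma(s^n)\ge\sigma(s_{\min})>0$, so one may take $G_*:=\sigma(s_{\min})/\sigma(C^*)$, completing the proof.

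The main obstacle is precisely the lower bound on $s^n$: the auxiliary variable may decrease at every step, and over a fixed horizon $[0,T]$ there are $\mathcal{O}(1/\dt)$ steps, so a priori $\sigma(s^n)$ — and hence $g(u^n,s^n)$ — could collapse to $0$. What saves the argument is that each step perturbs $s^n$ by only $\mathcal{O}(\dt)$, which in turn relies on the exponential-integrator one-step increment estimate $\|u^{n+1}-u^n\|=\mathcal{O}(\dt)$; note that this estimate itself uses the already-established upper bound $G^*$ to control the norms of the relevant operator functions, so the two bounds must be proved in this order. There is no circularity, however, since $G^*$ follows from the energy dissipation law (Corollary~\ref{cor_sav1_es}) alone, before any increment estimate is invoked.
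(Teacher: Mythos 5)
Your proposal is correct and follows essentially the same route as the paper: upper bound $G^*$ from Corollary~\ref{cor_sav1_es} and the monotonicity of $\sigma$, then a one-step increment estimate $\|u^{n+1}-u^n\|=\mathcal{O}(\dt)$ (with an $h$-dependent constant) feeding into \eqref{eq_eisav1b} to give $|s^{n+1}-s^n|=\mathcal{O}(\dt)$, and telescoping over at most $T/\dt$ steps to obtain the $\dt$-independent lower bound $S_*$ and hence $G_*=\sigma(S_*)/\sigma(C^*)$. Your rewriting of the increment via $u^{n+1}-u^n=\dt\,\phi_1(-\dt L_\kp^n)(\eps^2\Delta_h u^n+g(u^n,s^n)f(u^n))$ is a harmless cosmetic variant of the paper's splitting into $(\e^{-\dt L_\kp^n}-I)u^n$ plus the $N_\kp^n$ term, and leads to the same conclusion.
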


\begin{proof}
Since $\|u^n\|_\infty\le\beta$ (by Theorem \ref{thm_eisav1_mbp}),
according to \eqref{energy_bounds} and the conditions $(\Sigma_1)$ and $(\Sigma_2)$, it holds
$0<\sigma(-C_*) \le \sigma(E_{1h}(u^n)) \le \sigma(C^*)$.
According to Corollary \ref{cor_sav1_es}, we have
\[
g(u^n,s^n) = \frac{\sigma(s^n)}{\sigma(E_{1h}(u^n))}
\le \frac{\sigma(E_h(\uinit))}{\sigma(-C_*)} := G^*.
\]
Using \eqref{eig_laplace}, we then obtain the uniform bound of the spectral radius of $L_\kp^n$, $\rho(L_\kp^n)$, as
\begin{equation}
\label{Lkp_specrad}
\rho(L_\kp^n) \le \kp g(u^n,s^n) + \eps^2 \rho(\Delta_h) \le  G^*\kp + \frac{8\eps^2}{h^2} := M_h.
\end{equation}
Next we show the existence of the lower bound of $\{s^n\}$.
By making use of the MBP and the first inequality in Lemma \ref{lem_expfuns},
we derive from \eqref{eq_eisav1a} that
\begin{align*}
\|u^{n+1} - u^n\|
& \le \|I-\e^{-\dt L_\kp^n}\| \|u^n\| + \dt \|\phi_1(-\dt L_\kp^n)\| \|N_\kp(u^n,s^n)\| \\
& \le \dt \rho(L_\kp^n) \cdot \beta |\Omega|^{\frac{1}{2}} + \dt \cdot \kp\beta g(u^n,s^n) |\Omega|^{\frac{1}{2}}  \le \dt \beta |\Omega|^{\frac{1}{2}} (M_h+G^*\kp).
\end{align*}
Since $\|f(u^n)\|\le F_0|\Omega|^{\frac{1}{2}}$ with $F_0:=\|f\|_{C[-\beta,\beta]}$,
we then obtain from \eqref{eq_eisav1b} that
\begin{equation}
\label{s_lowbound_pf}
s^{n+1} \ge s^n - g(u^n,s^n) \|f(u^n)\| \|u^{n+1}-u^n\|
\ge s^n - G^* F_0 \beta |\Omega| (M_h+G^*\kp)\dt.
\end{equation}
By recursion, noting that $s^0 = E_{1h}(\uinit) \ge -C_*$, we obtain
\begin{equation}
\label{s_lowbound}
s^n \ge s^0 - G^* F_0 \beta |\Omega| (M_h+G^*\kp) n\dt \ge -C_* - G^* F_0 \beta |\Omega| (M_h+G^*\kp)T := S_*.
\end{equation}
Thus
$g(u^n,s^n)
\ge \sigma(S_*)/\sigma(C^*) := G_*,$
which completes the proof.
\end{proof}

\subsubsection{Temporal error analysis}

Note that  \eqref{eq_eisav1a} is equivalent to find $u^{n+1}=w(\dt)$ with $w(\theta)$ satisfying
\begin{equation}
\label{etd1_eqv}
\begin{dcases}
\daoshu{w(\theta)}{\theta} + L_\kp^n w(\theta) =  N_\kp^n(u^n,s^n), & \theta \in(0,\dt], \\
w(0) = u^n.
\end{dcases}
\end{equation}
 Let $s_{h,e}(t)=E_{1h}(u_{h,e}(t))$.
Define $w_{h,e}(\theta)=u_{h,e}(t_n+\theta)$ for $\theta\in[0,\dt]$. It holds
\begin{equation}
\label{etd1_eqv_trun}
\begin{dcases}
\daoshu{w_{h,e}(\theta)}{\theta} + L_\kp^n w_{h,e}(\theta)
=  N_\kp^n(u_{h,e}(t_n),s_{h,e}(t_n)) + R_{1u}^n(\theta), & \theta \in(0,\dt], \\
w_{h,e}(0) = u_{h,e}(t_n),
\end{dcases}
\end{equation}
where the truncation error
\begin{align*}
R_{1u}^n(\theta)
& = g(u_{h,e}(t_n+\theta),s_{h,e}(t_n+\theta))f(u_{h,e}(t_n+\theta)) - g(u_{h,e}(t_n),s_{h,e}(t_n))f(u_{h,e}(t_n)) \\
& \quad + \kp g(u^n,s^n) (u_{h,e}(t_n+\theta) - u_{h,e}(t_n)).
\end{align*}
For $s_{h,e}(t)$, we have
\begin{eqnarray}
&&s_{h,e}(t_{n+1})-s_{h,e}(t_n)\label{sav1trunb}\\
&&\qquad = - g(u_{h,e}(t_n),s_{h,e}(t_n)) \<f(u_{h,e}(t_n)),u_{h,e}(t_{n+1})-u_{h,e}(t_n)\> + \dt R_{1s}^n, \nn
\end{eqnarray}
for some truncation residual $R_{1s}^n$. Furthermore, it is easy to verify that
\begin{equation}
\label{eisav1trunerr}
{\sup_{\theta\in(0,\dt)}}\|R_{1u}^n(\theta)\| \le C_{e,h}\dt,\qquad |R_{1s}^n|\le C_{e,h}\dt,
\end{equation}
where  the constant $C_{e,h}>0$ depends on $u_{h,e}$, $\kp$, $\eps$, and $\|f\|_{C^1[-\beta,\beta]}$.
Now we define the error functions as
\begin{equation}
\label{errorfuns}
e_u^n = u^n - u_{h,e}(t_n), \qquad e_s^n = s^n - s_{h,e}(t_n).
\end{equation}

\begin{lemma}
\label{lem_sav1_nonlinear}
Given any fixed  $h>0$ and $T>0$.
If $\kp\ge\|f'\|_{C[-\beta,\beta]}$ and $\|\uinit\|_\infty\le\beta$, we have
\[
\|g(u^n,s^n)f(u^n) - g(u_{h,e}(t_n),s_{h,e}(t_n))f(u_{h,e}(t_n))\| \le C_g (\|e_u^n\| + |e_s^n|),
\quad 0 \le n \le \lfloor T/\dt \rfloor,
\]
where $C_g>0$ is a constant depending on $C_*$, $|\Omega|$, $\uinit$, and $\|f\|_{C^1[-\beta,\beta]}$.
\end{lemma}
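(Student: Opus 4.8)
The plan is to split the difference into one part that isolates the discrepancy in the reaction term $f$ and one part that isolates the discrepancy in the (scalar) coefficient $g$. Abbreviate $\bar u = u_{h,e}(t_n)$ and $\bar s = s_{h,e}(t_n)$, so the quantity to estimate is $g(u^n,s^n)f(u^n) - g(\bar u,\bar s)f(\bar u)$. First I would write
\[
g(u^n,s^n)f(u^n) - g(\bar u,\bar s)f(\bar u)
= g(u^n,s^n)\bigl(f(u^n) - f(\bar u)\bigr) + \bigl(g(u^n,s^n) - g(\bar u,\bar s)\bigr) f(\bar u).
\]
The first summand is the easy one: by Theorem \ref{thm_eisav1_mbp} the scheme preserves the MBP and by \eqref{mbp} so does the exact semi-discrete solution, hence $u^n$ and $\bar u$ take values in $[-\beta,\beta]$ pointwise, where $f$ is Lipschitz with constant $\|f'\|_{C[-\beta,\beta]}$; combining this with the uniform bound $g(u^n,s^n)\le G^*$ from Corollary \ref{cor_g_bound} yields $\|g(u^n,s^n)(f(u^n)-f(\bar u))\| \le G^*\|f'\|_{C[-\beta,\beta]}\|e_u^n\|$.

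For the second summand, since $g(\cdot,\cdot)$ is scalar-valued and $\|f(\bar u)\|\le F_0|\Omega|^{1/2}$ with $F_0=\|f\|_{C[-\beta,\beta]}$, it suffices to bound $|g(u^n,s^n) - g(\bar u,\bar s)|$. Using the definition \eqref{g_def} and splitting once more,
\[
g(u^n,s^n) - g(\bar u,\bar s)
= \frac{\sigma(s^n) - \sigma(\bar s)}{\sigma(E_{1h}(u^n))}
+ \sigma(\bar s)\,\frac{\sigma(E_{1h}(\bar u)) - \sigma(E_{1h}(u^n))}{\sigma(E_{1h}(u^n))\,\sigma(E_{1h}(\bar u))}.
\]
Here I would invoke: (i) $E_{1h}(u^n),E_{1h}(\bar u)\in[-C_*,C^*]$ by the MBP together with \eqref{energy_bounds}, so by condition $(\Sigma_2)$ the denominators are bounded below by $\sigma(-C_*)>0$ and $\sigma(\bar s)$ is bounded; (ii) $s^n$ lies in the fixed interval $[S_*,E_h(\uinit)]$ thanks to Corollary \ref{cor_sav1_es} and the recursive lower bound \eqref{s_lowbound}, while $\bar s=E_{1h}(\bar u)\in[-C_*,C^*]$, so $\sigma$ restricted to a fixed compact interval containing both is Lipschitz, giving $|\sigma(s^n)-\sigma(\bar s)|\le L_\sigma|e_s^n|$; and (iii) $|E_{1h}(u^n)-E_{1h}(\bar u)| = |\langle F(u^n)-F(\bar u),1\rangle| \le \|F'\|_{C[-\beta,\beta]}\langle|u^n-\bar u|,1\rangle \le F_0|\Omega|^{1/2}\|e_u^n\|$, using the MBP, the Lipschitz continuity of $F$ on $[-\beta,\beta]$ (recall $F'=-f$), and the discrete Cauchy--Schwarz inequality. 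Assembling these bounds controls $|g(u^n,s^n)-g(\bar u,\bar s)|$ by a constant times $\|e_u^n\|+|e_s^n|$, and multiplying by $\|f(\bar u)\|$ finishes the second summand. Adding the two contributions gives the claim.

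I do not expect a genuine obstacle here; the only point requiring care is that the Lipschitz constant $L_\sigma$ of $\sigma$ must be taken over a \emph{fixed} bounded interval, which is legitimate precisely because the energy-dissipation and MBP estimates (Corollaries \ref{cor_sav1_es} and \ref{cor_g_bound}, and \eqref{s_lowbound}) confine both $s^n$ and $s_{h,e}(t_n)$ to such an interval uniformly in $n$ and $\dt$ for $0\le n\le\lfloor T/\dt\rfloor$ --- this is exactly the mechanism, announced in the introduction, by which the variable coefficients are kept under control, and it is what makes $C_g$ independent of $\dt$.
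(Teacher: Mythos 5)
Your argument is correct. Note that the paper itself does not prove this lemma: it cites the special case $\sigma(x)=\e^x$ from \cite{JuLiQi21} and asserts that the general case follows from \eqref{energy_bounds} and $(\Sigma_1)$--$(\Sigma_2)$ "with no essential difficulty." Your proof is exactly the expected argument: the two-term splitting isolating $f(u^n)-f(\bar u)$ and $g(u^n,s^n)-g(\bar u,\bar s)$, the MBP plus Corollary \ref{cor_g_bound} for the first term, and the further splitting of the $g$-difference into a $\sigma(s^n)-\sigma(\bar s)$ part and a $\sigma(E_{1h}(u^n))-\sigma(E_{1h}(\bar u))$ part, with $|E_{1h}(u^n)-E_{1h}(\bar u)|\le F_0|\Omega|^{1/2}\|e_u^n\|$ via $F'=-f$ and Cauchy--Schwarz. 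All steps check out.

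One point where you are actually more careful than the paper's dependence list: for a general $\sigma$ satisfying only $(\Sigma_1)$--$(\Sigma_2)$, $\sigma'$ need not be bounded on half-lines, so the Lipschitz estimate $|\sigma(s^n)-\sigma(\bar s)|\le L_\sigma|e_s^n|$ genuinely requires the lower bound $S_*$ on $s^n$ from \eqref{s_lowbound}. Since $S_*$ involves $M_h$ and $T$, the resulting $C_g$ in fact also depends on $h$, $T$, $\kp$, $\eps$, and $\sigma$ --- not only on the quantities listed in the lemma. This is harmless under the paper's standing ``given any fixed $h>0$ and $T>0$'' convention (and vacuous in the special case $\sigma(x)=\e^x$, where $\sigma'$ is bounded on $(-\infty,E_h(\uinit)]$), but your proposal correctly identifies it as the one step that needs the a priori confinement of $s^n$, which the paper's one-line dismissal glosses over.
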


A special case of this lemma with $\sigma(x)=\e^x$ has been proved in  \cite{JuLiQi21}.
Using \eqref{energy_bounds} and the conditions $(\Sigma_1)$ and $(\Sigma_2)$,
there is no essential difficulty to obtain the general result,
so we omit the proof.

\begin{theorem}[Temporal error estimate of GSAV-EI1]
\label{thm_etd1_error}
Given any fixed  $h>0$ and $T>0$, and let $\kp\ge \|f'\|_{C[-\beta,\beta]}$.
Assume that  the exact solution $u_{h,e}$ is smooth enough on $[0,T]$ and $\|\uinit\|_\infty\le\beta$.
If $\dt>0$ is sufficiently small, then we have the following error estimate
for the GSAV-EI1 scheme \eqref{eq_eisav1}:
\begin{equation}
\label{etd1_error}
\|u^n - u_{h,e}(t_n)\| + |s^n - s_{h,e}(t_n)| \le C_{h,1}\dt,\quad 0 \le n \le \lfloor T/\dt \rfloor,
\end{equation}
where the constant $C_{h,1}>0$ 
is independent of $\dt$.
\end{theorem}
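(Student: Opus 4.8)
The plan is to derive a coupled recursion for the error pair $(e_u^n, e_s^n)$ and close it by a discrete Gr\"onwall argument. First I would subtract the truncated exact identity \eqref{etd1_eqv_trun} from the scheme identity \eqref{etd1_eqv}. Writing $w_e^n = w_{h,e}$ for the exact profile on $[t_n,t_{n+1}]$, the difference $\ehat(\theta) := w(\theta) - w_e^n(\theta)$ satisfies the linear ODE $\ehat'(\theta) + L_\kp^n \ehat(\theta) = \big[N_\kp^n(u^n,s^n) - N_\kp^n(u_{h,e}(t_n),s_{h,e}(t_n))\big] - R_{1u}^n(\theta)$ with $\ehat(0) = e_u^n$, whence by the variation-of-constants formula
\[
e_u^{n+1} = \e^{-\dt L_\kp^n} e_u^n + \dt\,\phi_1(-\dt L_\kp^n)\big[N_\kp^n(u^n,s^n) - N_\kp^n(u_{h,e}(t_n),s_{h,e}(t_n))\big] - \int_0^\dt \e^{-(\dt-\theta)L_\kp^n} R_{1u}^n(\theta)\,\d\theta.
\]
Now $N_\kp^n(v,r) - N_\kp^n(\tilde v,\tilde r) = \big(g(v,r)f(v) - g(\tilde v,\tilde r)f(\tilde v)\big) + \kp g(u^n,s^n)(v-\tilde v)$. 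Using $\|\e^{-\dt L_\kp^n}\| \le 1$ (since $L_\kp^n$ is symmetric positive definite) and $\|\phi_1(-\dt L_\kp^n)\| \le 1$ together with the spectral bound $\rho(L_\kp^n) \le M_h$ from \eqref{Lkp_specrad}, the stabilization term contributes $\dt\,\|\phi_1(-\dt L_\kp^n)\|\,\kp G^* \|e_u^n\| \le \kp G^* \dt\,\|e_u^n\|$; Lemma \ref{lem_sav1_nonlinear} bounds the nonlinear difference by $C_g(\|e_u^n\|+|e_s^n|)$; and \eqref{eisav1trunerr} bounds the integral remainder by $\dt\,C_{e,h}\dt = C_{e,h}\dt^2$. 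This yields
\[
\|e_u^{n+1}\| \le (1 + C_1\dt)\|e_u^n\| + C_1\dt\,|e_s^n| + C_{e,h}\dt^2
\]
for a constant $C_1$ depending on $\kp$, $G^*$, $C_g$ (all finite by Corollary \ref{cor_g_bound}), hence on the quantities listed in the theorem but not on $\dt$. For the scalar error I would subtract \eqref{sav1trunb} from \eqref{eq_eisav1b}, obtaining $e_s^{n+1} = e_s^n - \big[g(u^n,s^n)\<f(u^n),u^{n+1}-u^n\> - g(u_{h,e}(t_n),s_{h,e}(t_n))\<f(u_{h,e}(t_n)),u_{h,e}(t_{n+1})-u_{h,e}(t_n)\>\big] - \dt R_{1s}^n$. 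Splitting this difference additively — one term with the factor $(u^{n+1}-u^n)$ and the error in $g(\cdot)f(\cdot)$, another with $g(u_{h,e}(t_n),s_{h,e}(t_n))f(u_{h,e}(t_n))$ fixed and the error in the increment $(u^{n+1}-u^n)-(u_{h,e}(t_{n+1})-u_{h,e}(t_n)) = e_u^{n+1}-e_u^n$ — and using the uniform bounds $\|u^{n+1}-u^n\| \le C\dt$ (shown in the proof of Corollary \ref{cor_g_bound}), $\|g(u_{h,e}(t_n),s_{h,e}(t_n))f(u_{h,e}(t_n))\| \le C$, Lemma \ref{lem_sav1_nonlinear}, and $|R_{1s}^n| \le C_{e,h}\dt$, I get
\[
|e_s^{n+1}| \le |e_s^n| + C_2\dt\,(\|e_u^n\| + |e_s^n|) + C\big(\|e_u^{n+1}\| + \|e_u^n\|\big) + C_{e,h}\dt^2.
\]

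The one subtlety here is the $\|e_u^{n+1}\|$ appearing on the right in the $e_s$ estimate; I would eliminate it by substituting the already-derived bound for $\|e_u^{n+1}\|$, which is legitimate since that bound involves only $\|e_u^n\|$, $|e_s^n|$ and $\dt^2$. After this substitution, setting $\mathcal{E}^n := \|e_u^n\| + |e_s^n|$, both inequalities combine into a single scalar recursion of the form $\mathcal{E}^{n+1} \le (1 + C_3\dt)\mathcal{E}^n + C_4\dt^2$ with $C_3, C_4$ independent of $\dt$. Since $e_u^0 = 0$ and $e_s^0 = 0$ (because $u^0 = \uinit$ and $s^0 = E_{1h}(u^0) = s_{h,e}(0)$), the discrete Gr\"onwall inequality gives $\mathcal{E}^n \le C_4\dt^2 \sum_{k=0}^{n-1}(1+C_3\dt)^k \le \frac{C_4}{C_3}\dt\big[(1+C_3\dt)^n - 1\big] \le \frac{C_4}{C_3}\dt\,(\e^{C_3 T} - 1) =: C_{h,1}\dt$ for $n \le \lfloor T/\dt\rfloor$, which is exactly \eqref{etd1_error}.

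The main obstacle is not any single estimate but rather keeping the bookkeeping honest: every constant that enters ($G_*, G^*, M_h$, the increment bound $\|u^{n+1}-u^n\|\le C\dt$, the constants in Lemma \ref{lem_sav1_nonlinear}) must be traced back to Corollary \ref{cor_g_bound} and Theorem \ref{thm_eisav1_mbp} to confirm it is genuinely independent of $\dt$, and the hypothesis "$\dt$ sufficiently small" is needed only so that $M_h\dt$ is bounded (so $\|u^{n+1}-u^n\|\le C\dt$ has a clean $\dt$-free constant) and so the Gr\"onwall factor $(1+C_3\dt)^n \le \e^{C_3 T}$ is controlled. A secondary point worth stating carefully is that $u^{n+1}$ lies in $[-\beta,\beta]$ pointwise by Theorem \ref{thm_eisav1_mbp}, which is what makes Lemma \ref{lem_sav1_nonlinear} applicable at the new time level as well; without the MBP the nonlinear term could not be controlled linearly in the error. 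Everything else is the routine $\phi_1$-calculus already packaged in Lemma \ref{lem_expfuns} and the contraction estimates of Lemma \ref{lem_lapdiff}.
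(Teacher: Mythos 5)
Your $e_u$ recursion is fine, but the $e_s$ recursion has a genuine gap that breaks the Gr\"onwall closure. In your splitting of the scalar error, the second piece is $\<g(u_{h,e}(t_n),s_{h,e}(t_n))f(u_{h,e}(t_n)),\,e_u^{n+1}-e_u^n\>$, and you bound the increment crudely by $\|e_u^{n+1}-e_u^n\|\le\|e_u^{n+1}\|+\|e_u^n\|$. This produces the term $C(\|e_u^{n+1}\|+\|e_u^n\|)$ in your displayed inequality for $|e_s^{n+1}|$, which carries \emph{no} factor of $\dt$. Substituting your bound for $\|e_u^{n+1}\|$ does not help: you are left with $|e_s^{n+1}|\le|e_s^n|+2C\|e_u^n\|+\cdots$, i.e.\ an $O(1)\cdot\|e_u^n\|$ increment rather than $O(\dt)\cdot\mathcal{E}^n$. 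Summing over $n\le T/\dt$ then costs a factor $1/\dt$ and the claimed recursion $\mathcal{E}^{n+1}\le(1+C_3\dt)\mathcal{E}^n+C_4\dt^2$ simply does not follow from the inequalities you wrote. The repair within your framework is to bound the increment from the error equation itself: $e_u^{n+1}-e_u^n=(\e^{-\dt L_\kp^n}-I)e_u^n+\dt\phi_1(-\dt L_\kp^n)[\Delta N]-\int_0^\dt\e^{-(\dt-\theta)L_\kp^n}R_{1u}^n(\theta)\,\d\theta$, so that $\|e_u^{n+1}-e_u^n\|\le M_h\dt\|e_u^n\|+C\dt(\|e_u^n\|+|e_s^n|)+C_{e,h}\dt^2$ using $\|\e^{-\dt L_\kp^n}-I\|\le\dt\rho(L_\kp^n)\le M_h\dt$; this reinstates the missing factor of $\dt$ (at the price of an $h$-dependent constant, which is acceptable since $h$ is fixed in the theorem).

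It is worth noting that the paper avoids this issue by a different mechanism: it acts $I+\dt L_\kp^n$ on the error equation, tests with $\delta_t e_u^{n+1}=(e_u^{n+1}-e_u^n)/\dt$, and retains a positive term $\frac{\dt}{2}\|\delta_t e_u^{n+1}\|^2$ on the left-hand side of the squared $e_u$ estimate; the troublesome term $-2\dt g(u^n,s^n)e_s^{n+1}\<f(u^n),\delta_t e_u^{n+1}\>$ in the $e_s$ equation is then absorbed by exactly this reserved quantity via Young's inequality. Your first-power, direct-norm approach is more elementary and would also work once the increment bound above is inserted, but as written the proof does not close.
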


\begin{proof}
Define $e(\theta)=w(\theta)-w_{h,e}(\theta)$.
The difference between \eqref{etd1_eqv} and \eqref{etd1_eqv_trun} leads to
\[
\begin{dcases}
\daoshu{e(\theta)}{\theta} + L_\kp^n e(\theta)
= N_\kp^n(u^n,s^n) - N_\kp^n(u_{h,e}(t_n),s_{h,e}(t_n)) - R_{1u}^n(\theta), \quad \theta \in(0,\dt], \\
e(0) =  e_u^n,
\end{dcases}
\]
whose solution $e(\dt)=u^{n+1} - u_{h,e}(t_{n+1}) = e_u^{n+1}$ can be expressed as
\begin{align}
e_u^{n+1} & = \e^{-\dt L_\kp^n} e_u^n
+ \dt \phi_1(-\dt L_\kp^n) [N_\kp^n(u^n,s^n) - N_\kp^n(u_{h,e}(t_n),s_{h,e}(t_n))] \label{etd1_err} \\
& \quad - \int_0^\dt \e^{-(\dt-\theta)L_\kp^n} R_{1u}^n(\theta) \, \d \theta. \nn
\end{align}
Acting $I+\dt L_\kp^n$ on both sides of \eqref{etd1_err}, we obtain
\begin{align}
& (1+\dt\kp g(u^n,s^n)) (e_u^{n+1}-e_u^n) - \dt\eps^2(\Delta_h e_u^{n+1} - \Delta_h e_u^n) \label{etd1_err_pf1} \\
& \qquad = \dt (I+\dt L_\kp^n) \phi_1(-\dt L_\kp^n)
[N_\kp^n(u^n,s^n) - N_\kp^n(u_{h,e}(t_n),s_{h,e}(t_n))] \nn \\
& \qquad\quad + (I+\dt L_\kp^n)(\e^{-\dt L_\kp^n}-I) e_u^n
- \int_0^\dt (I+\dt L_\kp^n)\e^{-(\dt-\theta)L_\kp^n} R_{1u}^n(\theta) \, \d \theta. \nn
\end{align}
Taking the discrete inner product of \eqref{etd1_err_pf1} with $\delta_t e_u^{n+1}:=(e_u^{n+1}-e_u^n)/\dt$, we get the reformulation of
the left-hand side (LHS) of \eqref{etd1_err_pf1} as 
\[
\text{LHS} = \dt \|\delta_t e_u^{n+1}\|^2 + \kp g(u^n,s^n) \|e_u^{n+1}-e_u^n\|^2 + \eps^2\|\nabla_h e_u^{n+1} - \nabla_h e_u^n\|^2,
\]
and the reformulation of the right-hand side (RHS) of \eqref{etd1_err_pf1} as
\begin{align*}
&\text{RHS}
 = \dt \<(I+\dt L_\kp^n) \phi_1(-\dt L_\kp^n)
[N_\kp^n(u^n,s^n) - N_\kp^n(u_{h,e}(t_n),s_{h,e}(t_n))], \delta_t e_u^{n+1}\> \\
&\quad + \<(I+\dt L_\kp^n)(\e^{-\dt L_\kp^n}-I) e_u^n, \delta_t e_u^{n+1}\>
- \int_0^\dt \<(I+\dt L_\kp^n)\e^{-(\dt-\theta)L_\kp^n}R_{1u}^n(\theta),\delta_t e_u^{n+1}\> \,\d\theta.
\end{align*}
By using Corollary \ref{cor_g_bound}, the identity
$\|e_u^{n+1}-e_u^n\|^2 = \|e_u^{n+1}\|^2 - \|e_u^n\|^2 - 2\dt \<e_u^n,\delta_te_u^{n+1}\>$, and the Young's inequality, we obtain
\begin{align}
\text{LHS}
& \ge \dt \|\delta_t e_u^{n+1}\|^2 + G_*\kp \|e_u^{n+1}\|^2 - G_*\kp \|e_u^n\|^2 - 2G_*\kp\dt \<e_u^n,\delta_te_u^{n+1}\>
\label{etd1_err_pf1_lhs} \\
& \ge \frac{7\dt}{8} \|\delta_t e_u^{n+1}\|^2 + G_*\kp \|e_u^{n+1}\|^2 - G_*\kp \|e_u^n\|^2 - 8G_*^2\kp^2\dt \|e_u^n\|^2. \nn
\end{align}
According to \eqref{Lkp_specrad}, when $\dt\le M_h^{-1}$, we have
$\|I+\dt L_\kp^n\| \le 1 + \dt \rho(L_\kp^n) \le 2$.
By Lemma \ref{lem_expfuns}, we have $\|\e^{-\dt L_\kp^n}-I\| \le \|\dt L_\kp^n\| \le M_h \dt$.
Thus, for $\dt\le M_h^{-1}$, we have
\begin{align}
\text{RHS}
& \le 2\dt \|N_\kp^n(u^n,s^n) - N_\kp^n(u_{h,e}(t_n),s_{h,e}(t_n))\| \|\delta_t e_u^{n+1}\| \label{ttt}\\
& \quad + 2M_h\dt \|e_u^n\| \|\delta_t e_u^{n+1}\|
+ 2\dt {\sup_{\theta\in(0,\dt)}} \|R_{1u}^n(\theta)\| \|\delta_t e_u^{n+1}\| \nn\\
& \le 8\dt \|N_\kp^n(u^n,s^n) - N_\kp^n(u_{h,e}(t_n),s_{h,e}(t_n))\|^2 \nn \\
& \quad + 8M_h^2\dt \|e_u^n\|^2 + 8\dt {\sup_{\theta\in(0,\dt)}} \|R_{1u}^n(\theta)\|^2 + \frac{3\dt}{8} \|\delta_t e_u^{n+1}\|^2.\nn
\end{align}
Using Lemma \ref{lem_sav1_nonlinear}, we have
\begin{align}
& \|N_\kp^n(u^n,s^n) - N_\kp^n(u_{h,e}(t_n),s_{h,e}(t_n))\| \label{etd1_err_nonlinear} \\
& \qquad \le \|g(u^n,s^n) f(u^n) - g(u_{h,e}(t_n),s_{h,e}(t_n)) f(u_{h,e}(t_n))\| + \kp g(u^n,s^n) \|e_u^n\| \nn \\
& \qquad \le (C_g+G^*\kp) \|e_u^n\| + C_g |e_s^n|, \nn
\end{align}
and thus, we obtain from \eqref{ttt} and \eqref{etd1_err_nonlinear} that
\begin{align}
\text{RHS} & \le [16(C_g+G^*\kp)^2+8M_h^2]\dt \|e_u^n\|^2 + 16C_g^2\dt |e_s^n|^2 \label{sss} \\
& \quad + 8\dt {\sup_{\theta\in(0,\dt)}} \|R_{1u}^n(\theta)\|^2 + \frac{3\dt}{8} \|\delta_t e_u^{n+1}\|^2. \nn
\end{align}
Combining \eqref{sss} with \eqref{etd1_err_pf1_lhs}, we obtain
\begin{align}
& G_*\kp \|e_u^{n+1}\|^2 - G_*\kp \|e_u^n\|^2 + \frac{\dt}{2} \|\delta_t e_u^{n+1}\|^2  \label{etd1_err_pf2} \\
& \qquad \le [16(C_g+G^*\kp)^2+8M_h^2+8G_*^2\kp^2]\dt \|e_u^n\|^2 + 16C_g^2\dt |e_s^n|^2 \nn \\
& \qquad\quad + 8\dt {\sup_{\theta\in(0,\dt)}} \|R_{1u}^n(\theta)\|^2. \nn
\end{align}

The difference between \eqref{eq_eisav1b} and \eqref{sav1trunb} leads to
\begin{align*}
e_s^{n+1}-e_s^n
& = \< g(u_{h,e}(t_n),s_{h,e}(t_n)) f(u_{h,e}(t_n)) - g(u^n,s^n) f(u^n), u_{h,e}(t_{n+1})-u_{h,e}(t_n) \> \\
& \quad - g(u^n,s^n) \<f(u^n),e_u^{n+1}-e_u^n\> - \dt R_{1s}^n. \nn
\end{align*}
Multiplying the above equation by $2e_s^{n+1}$ yields
\begin{align}
& |e_s^{n+1}|^2 - |e_s^n|^2 + |e_s^{n+1}-e_s^n|^2 \label{sav1err_pf6} \\
& \quad = 2e_s^{n+1} \< g(u_{h,e}(t_n),s_{h,e}(t_n)) f(u_{h,e}(t_n)) - g(u^n,s^n) f(u^n),  \nn \\
& \qquad\quad u_{h,e}(t_{n+1})-u_{h,e}(t_n) \>- 2\dt g(u^n,s^n) e_s^{n+1} \<f(u^n),\delta_te_u^{n+1}\> - 2\dt R_{1s}^n e_s^{n+1}. \nn
\end{align}
For the first term on the right-hand side of \eqref{sav1err_pf6}, using Lemma \ref{lem_sav1_nonlinear}, we have
\begin{align}
& 2e_s^{n+1} \< g(u_{h,e}(t_n),s_{h,e}(t_n)) f(u_{h,e}(t_n)) - g(u^n,s^n) f(u^n), u_{h,e}(t_{n+1})-u_{h,e}(t_n) \>\label{sav1err_pf7a}\\
& \le 2|e_s^{n+1}| \| g(u_{h,e}(t_n),s_{h,e}(t_n)) f(u_{h,e}(t_n)) - g(u^n,s^n) f(u^n)\| \|u_{h,e}(t_{n+1})-u_{h,e}(t_n)\| \nn \\
& \le 2C_{g} \dt |e_s^{n+1}|  (\|e_u^n\| + |e_s^n|) \|(u_{h,e})_t(\theta_n)\| \qquad \text{($t_n<\theta_n<t_{n+1}$)} \nn \\
& \le C_1 \dt (\|e_u^n\|^2 + |e_s^n|^2 + |e_s^{n+1}|^2),\nn
\end{align}
where $C_1>0$ depends on $C_*$, $|\Omega|$, $u_{h,e}$, and $\|f\|_{C^1[-\beta,\beta]}$.
For the second term on the right-hand side of \eqref{sav1err_pf6}, we have
\begin{align}
-2\dt g(u^n,s^n) e_s^{n+1} \<f(u^n),\delta_te_u^{n+1}\>&
\le 2\dt G^*\|f(u^n)\| |e_s^{n+1}| \|\delta_te_u^{n+1}\|\label{sav1err_pf7b}\\
&\le C_2\dt |e_s^{n+1}|^2 + \frac{\dt}{2}\|\delta_te_u^{n+1}\|^2,\nn
\end{align}
where $C_2>0$ depends on $C_*$, $|\Omega|$, $\uinit$, and $\|f\|_{C[-\beta,\beta]}$.
For the third term on the right-hand side of \eqref{sav1err_pf6}, we have
\begin{equation}
\label{sav1err_pf7c}
- 2\dt R_{1s}^n e_s^{n+1} \le \dt |R_{1s}^n|^2 + \dt |e_s^{n+1}|^2.
\end{equation}
Substituting \eqref{sav1err_pf7a}--\eqref{sav1err_pf7c} into \eqref{sav1err_pf6} leads to
\begin{equation}
\label{etd1_err_pf3}
|e_s^{n+1}|^2 - |e_s^n|^2
\le C_1 \dt \|e_u^n\|^2 + C_1 \dt |e_s^n|^2
+ (1 + C_1 + C_2) \dt |e_s^{n+1}|^2 + \frac{\dt}{2}\|\delta_te_u^{n+1}\|^2 + \dt |R_{1s}^n|^2.
\end{equation}

Adding \eqref{etd1_err_pf2} and \eqref{etd1_err_pf3}
and using \eqref{eisav1trunerr},
we reach
\[
G_*\kp (\|e_u^{n+1}\|^2 - \|e_u^n\|^2) + (|e_s^{n+1}|^2 - |e_s^n|^2)
\le C_3 \dt (\|e_u^n\|^2 + |e_s^n|^2 + |e_s^{n+1}|^2) + 9C_{e,h}^2\dt^3,
\]
where $C_3>0$ depends on $C_*$, $|\Omega|$, $T$, $u_{h,e}$, $\kp$, $\eps$, and $\|f\|_{C^1[-\beta,\beta]}$.
Finally, by applying the discrete Gronwall's inequality, we obtain
\[
G_*\kp \|e_u^n\|^2 + |e_s^n|^2 \le \tilde C_{h,1} \dt^2,
\]
where $\tilde C_{h,1}>0$ is a constant independent of $\dt$, which finally gives us \eqref{etd1_error} by taking $C_{h,1} = \sqrt{\tilde C_{h,1}/\min\{1,\widehat{G}_*\kp\}}$.
\end{proof}

\subsection{Second-order GSAV-EI scheme}

Now we present the second-order generalized SAV-exponential integrator (GSAV-EI2) scheme, which is developed in the prediction-correction fashion.
Let $\kp>0$ again be the  stabilizing constant.
First, we adopt the GSAV-EI1 scheme \eqref{eq_eisav1}
to generate a solution $(\widetilde{u}^{n+1},\widetilde{s}^{n+1})$ as the prediction and define
$
\widetilde{u}^{n+\frac{1}{2}} = (u^n+\widetilde{u}^{n+1})/{2}$ and
$\widetilde{s}^{n+\frac{1}{2}} = (s^n+\widetilde{s}^{n+1})/{2}$.
Then, we rewrite the space-discrete system \eqref{eq_semidisa} in the equivalent form as
\begin{align*}
\daoshu{u_h}{t}
& = \eps^2 \Delta_h u_h + g(u_h,s_h) f(u_h) + \kp g(\widetilde{u}^{n+\frac{1}{2}},\widetilde{s}^{n+\frac{1}{2}}) (u_h-u_h) \\
& = - L_\kp^{n+\frac{1}{2}} u_h + N_\kp^{n+\frac{1}{2}}(u_h,s_h),
\end{align*}
where the linear operator $L_\kp^{n+\frac{1}{2}}=\kp g(\widetilde{u}^{n+\frac{1}{2}},\widetilde{s}^{n+\frac{1}{2}}) I-\eps^2\Delta_h$
is self-adjoint and positive definite since $g(\widetilde{u}^{n+\frac{1}{2}},\widetilde{s}^{n+\frac{1}{2}})>0$ and $\kp > 0$,
and the nonlinear operator
\begin{equation}
\label{Nkp_def2}
N_\kp^{n+\frac{1}{2}}(v,r) = g(v,r) f(v) + \kp g(\widetilde{u}^{n+\frac{1}{2}},\widetilde{s}^{n+\frac{1}{2}}) v,
\quad \forall \, v \in \Mh, \ \forall \, r \in \R.
\end{equation}
Applying the variation-of-constants formula gives us
\begin{equation}
\label{etd2_exact}
u_h(t_{n+1}) = \e^{-\dt L_\kp^{n+\frac{1}{2}}} u_h(t_n) + \int_0^\dt \e^{-(\dt-\theta)L_\kp^{n+\frac{1}{2}}}
N_\kp^{n+\frac{1}{2}}(u_h(t_n+\theta),s_h(t_n+\theta)) \, \d \theta. \vspace{-0.2cm}
\end{equation}
Approximating the term $N_\kp^{n+\frac{1}{2}}$
by its value at $\theta=\frac{\dt}{2}$ in the above equation, i.e.,
\begin{equation}
\label{etd2_approx}
N_\kp^{n+\frac{1}{2}}(u_h(t_n+\theta),s_h(t_n+\theta)) \approx
N_\kp^{n+\frac{1}{2}}(u_h(t_{n+\frac{1}{2}}),s_h(t_{n+\frac{1}{2}})),
\end{equation}
we obtain the GSAV-EI2 scheme for computing $u^{n+1}$ as
\begin{subequations}
\label{eq_eisav2}
\begin{align}
u^{n+1} & = \e^{-\dt L_\kp^{n+\frac{1}{2}}} u^n +
\bigg(\int_0^\dt \e^{-(\dt-\theta)L_\kp^{n+\frac{1}{2}}} \,\d\theta\bigg)
N_\kp^{n+\frac{1}{2}}(\widetilde{u}^{n+\frac{1}{2}},\widetilde{s}^{n+\frac{1}{2}}) \label{eq_eisav2a} \\
& = \e^{-\dt L_\kp^{n+\frac{1}{2}}} u^n + \dt \phi_1(-\dt L_\kp^{n+\frac{1}{2}})
N_\kp^{n+\frac{1}{2}}(\widetilde{u}^{n+\frac{1}{2}},\widetilde{s}^{n+\frac{1}{2}}). \nn
\end{align}
To update $s^{n+1}$, we discretize \eqref{eq_semidisb} at $t=t_{n+\frac{1}{2}}$ to give
\begin{align}
\label{eq_eisav2b}
s^{n+1} & =  s^n- g(\widetilde{u}^{n+\frac{1}{2}},\widetilde{s}^{n+\frac{1}{2}}) \<f(\widetilde{u}^{n+\frac{1}{2}}), u^{n+1}-u^n\>\\
& \quad + \frac{\kp}{2} g(\widetilde{u}^{n+\frac{1}{2}},\widetilde{s}^{n+\frac{1}{2}}) \<u^{n+1}-\widetilde{u}^{n+1}, u^{n+1}-u^n\>.\nn
\end{align}
\end{subequations}
The second term on the right-hand side of \eqref{eq_eisav2b} is based on the Crank--Nicolson discretization,
and the third term is an artificial stabilization term of high order.

\subsubsection{Energy dissipation and MBP}

Similar to the analysis of the GSAV-EI1 scheme, we first prove the unconditional preservation of
the energy dissipation law and the MBP of the GSAV-EI2 scheme \eqref{eq_eisav2},
then we show the uniform boundedness of  $g(u^n,s^n)$ and $g(\widetilde{u}^{n+\frac{1}{2}},\widetilde{s}^{n+\frac{1}{2}})$,
which is important to  the error analysis.

\begin{theorem}[Energy dissipation of GSAV-EI2]
\label{thm_eisav2_es}
The GSAV-EI2 scheme \eqref{eq_eisav2} is unconditionally energy dissipative
in the time-discrete sense that $\hE_h(u^{n+1},s^{n+1})\le \hE_h(u^n,s^n)$ holds for any  $\dt>0$ and $n\ge 0$.
\end{theorem}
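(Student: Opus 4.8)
The plan is to follow the proof of Theorem \ref{thm_eisav1_es} almost verbatim, the one new ingredient being that the artificial high-order stabilization term in \eqref{eq_eisav2b} is tailored precisely to cancel an extra contribution coming from evaluating the stabilizer at the midpoint predictor. First I would expand $\hE_h(u^{n+1},s^{n+1}) - \hE_h(u^n,s^n)$ using its definition \eqref{modified_energy}, the identity $\|\nabla_h u^{n+1}\|^2-\|\nabla_h u^n\|^2 = 2\<\nabla_h u^{n+1},\nabla_h u^{n+1}-\nabla_h u^n\>-\|\nabla_h u^{n+1}-\nabla_h u^n\|^2$, summation-by-parts, and the update \eqref{eq_eisav2b} for $s^{n+1}$. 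This produces, in parallel with \eqref{thm_eisav1es_pf1},
\begin{align*}
&\hE_h(u^{n+1},s^{n+1}) - \hE_h(u^n,s^n) = -\<\eps^2\Delta_h u^{n+1}+g(\widetilde u^{n+\frac12},\widetilde s^{n+\frac12})f(\widetilde u^{n+\frac12}),\,u^{n+1}-u^n\> \\
&\qquad\qquad - \frac{\eps^2}{2}\|\nabla_h u^{n+1}-\nabla_h u^n\|^2 + \frac{\kp}{2}g(\widetilde u^{n+\frac12},\widetilde s^{n+\frac12})\<u^{n+1}-\widetilde u^{n+1},\,u^{n+1}-u^n\>.
\end{align*}

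Next I would rewrite the first inner product using $L_\kp^{n+\frac12}=\kp g(\widetilde u^{n+\frac12},\widetilde s^{n+\frac12})I-\eps^2\Delta_h$ and the definition \eqref{Nkp_def2}, obtaining $\eps^2\Delta_h u^{n+1}+g(\widetilde u^{n+\frac12},\widetilde s^{n+\frac12})f(\widetilde u^{n+\frac12}) = -L_\kp^{n+\frac12}u^{n+1}+N_\kp^{n+\frac12}(\widetilde u^{n+\frac12},\widetilde s^{n+\frac12})+\kp g(\widetilde u^{n+\frac12},\widetilde s^{n+\frac12})(u^{n+1}-\widetilde u^{n+\frac12})$. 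The crucial step is then to use $\widetilde u^{n+\frac12}=(u^n+\widetilde u^{n+1})/2$ to split $u^{n+1}-\widetilde u^{n+\frac12}=\frac12(u^{n+1}-u^n)+\frac12(u^{n+1}-\widetilde u^{n+1})$; the first half contributes $-\frac{\kp}{2}g(\widetilde u^{n+\frac12},\widetilde s^{n+\frac12})\|u^{n+1}-u^n\|^2\le0$, while the second half produces $-\frac{\kp}{2}g(\widetilde u^{n+\frac12},\widetilde s^{n+\frac12})\<u^{n+1}-\widetilde u^{n+1},u^{n+1}-u^n\>$, which exactly annihilates the stabilization term inherited from \eqref{eq_eisav2b}. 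After this cancellation one is left with $\hE_h(u^{n+1},s^{n+1}) - \hE_h(u^n,s^n) = \<L_\kp^{n+\frac12}u^{n+1}-N_\kp^{n+\frac12}(\widetilde u^{n+\frac12},\widetilde s^{n+\frac12}),u^{n+1}-u^n\>-\frac{\kp}{2}g(\widetilde u^{n+\frac12},\widetilde s^{n+\frac12})\|u^{n+1}-u^n\|^2-\frac{\eps^2}{2}\|\nabla_h u^{n+1}-\nabla_h u^n\|^2$.

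Finally, exactly as in the derivation of \eqref{thm_eisav1es_pf2}, I would act $[\dt\phi_1(-\dt L_\kp^{n+\frac12})]^{-1}=(I-\e^{-\dt L_\kp^{n+\frac12}})^{-1}L_\kp^{n+\frac12}$ on both sides of \eqref{eq_eisav2a} to get $L_\kp^{n+\frac12}u^{n+1}-N_\kp^{n+\frac12}(\widetilde u^{n+\frac12},\widetilde s^{n+\frac12}) = [L_\kp^{n+\frac12}-(I-\e^{-\dt L_\kp^{n+\frac12}})^{-1}L_\kp^{n+\frac12}](u^{n+1}-u^n)$, and observe that $L_\kp^{n+\frac12}$ is positive definite while $a-(1-\e^{-a})^{-1}a<0$ for all $a>0$, so by Lemma \ref{lem_matfun} the bracketed operator is negative definite and the first term is $\le0$; the remaining two terms are manifestly nonpositive, giving $\hE_h(u^{n+1},s^{n+1})\le\hE_h(u^n,s^n)$. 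The main (indeed only nonroutine) obstacle is recognizing and verifying the cancellation in the crucial step; everything else is a transcription of the first-order argument with the operators $L_\kp^n,N_\kp^n$ replaced by their midpoint counterparts $L_\kp^{n+\frac12},N_\kp^{n+\frac12}$.
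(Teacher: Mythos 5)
Your proposal is correct and follows essentially the same route as the paper's proof: expand the modified energy difference, rewrite the linear-plus-nonlinear term via $L_\kp^{n+\frac12}$ and $N_\kp^{n+\frac12}$, use $u^{n+1}-\widetilde u^{n+\frac12}=\frac12(u^{n+1}-u^n)+\frac12(u^{n+1}-\widetilde u^{n+1})$ so that the stabilization term in \eqref{eq_eisav2b} cancels, and conclude by the negative definiteness of $L_\kp^{n+\frac12}-(I-\e^{-\dt L_\kp^{n+\frac12}})^{-1}L_\kp^{n+\frac12}$. The only difference is presentational: you make explicit the midpoint-splitting cancellation that the paper compresses into "some simple calculations."
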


\begin{proof}
Similar to the proof of Theorem \ref{thm_eisav1_es},
some simple calculations give us
\begin{align*}
& \hE_h(u^{n+1},s^{n+1}) - \hE_h(u^n,s^n)  = \<L_\kp^{n+\frac{1}{2}} u^{n+1}-N_\kp^{n+\frac{1}{2}}(\widetilde{u}^{n+\frac{1}{2}},\widetilde{s}^{n+\frac{1}{2}}),u^{n+1}-u^n\> \nn \\
& \qquad\quad
- \kp g(\widetilde{u}^{n+\frac{1}{2}},\widetilde{s}^{n+\frac{1}{2}}) \<u^{n+1}-\widetilde{u}^{n+\frac{1}{2}},u^{n+1}-u^n\> \nn \\
& \qquad\quad - \frac{\eps^2}{2}\|\nabla_h u^{n+1}-\nabla_h u^n\|^2
+ \frac{\kp}{2} g(\widetilde{u}^{n+\frac{1}{2}},\widetilde{s}^{n+\frac{1}{2}}) \<u^{n+1}-\widetilde{u}^{n+1}, u^{n+1}-u^n\> \nn \\
& \qquad = \<(L_\kp^{n+\frac{1}{2}}-(I-\e^{-\dt L_\kp^{n+\frac{1}{2}}})^{-1}L_\kp^{n+\frac{1}{2}})(u^{n+1}-u^n),u^{n+1}-u^n\> \\
& \qquad\quad - \frac{\eps^2}{2}\|\nabla_h u^{n+1}-\nabla_h u^n\|^2 - \frac{\kp}{2}g(\widetilde{u}^{n+\frac{1}{2}},\widetilde{s}^{n+\frac{1}{2}})\|u^{n+1}-u^n\|^2.
\end{align*}
Then, the energy dissipation comes from the negative definiteness of the operator
$L_\kp^{n+\frac{1}{2}}-(I-\e^{-\dt L_\kp^{n+\frac{1}{2}}})^{-1}L_\kp^{n+\frac{1}{2}}$.
\end{proof}

\begin{corollary}
\label{cor_sav2_es}
For any  $\dt>0$ and $n\ge 0$, it holds that $s^n\le E_h(\uinit)$ and $\widetilde{s}^{n+1}\le E_h(\uinit)$ for the the GSAV-EI2 scheme \eqref{eq_eisav2}.
\end{corollary}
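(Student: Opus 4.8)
The plan is to mimic, step by step, the argument already used for Corollary \ref{cor_sav1_es}, now invoking the energy dissipation of the GSAV-EI2 scheme (Theorem \ref{thm_eisav2_es}) together with the energy dissipation of the predictor step.

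First I would bound $s^n$. By Theorem \ref{thm_eisav2_es} the modified energy is non-increasing along the GSAV-EI2 iteration, so
\[
\hE_h(u^n,s^n) \le \hE_h(u^{n-1},s^{n-1}) \le \cdots \le \hE_h(u^0,s^0).
\]
Since $u^0=\uinit$ and $s^0=E_{1h}(u^0)=E_{1h}(\uinit)$, the definition \eqref{modified_energy} and \eqref{dis_energy} give $\hE_h(u^0,s^0) = \frac{\eps^2}{2}\|\nabla_h\uinit\|^2 + E_{1h}(\uinit) = E_h(\uinit)$. Recalling that $\hE_h(u^n,s^n) = \frac{\eps^2}{2}\|\nabla_h u^n\|^2 + s^n$ and dropping off the nonnegative term $\frac{\eps^2}{2}\|\nabla_h u^n\|^2$ yields $s^n \le E_h(\uinit)$.

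For $\widetilde{s}^{n+1}$, the key observation is that the prediction pair $(\widetilde{u}^{n+1},\widetilde{s}^{n+1})$ is produced by exactly one step of the GSAV-EI1 scheme \eqref{eq_eisav1} started from $(u^n,s^n)$. The proof of Theorem \ref{thm_eisav1_es} uses only the algebraic structure of a single GSAV-EI1 update, together with the negative definiteness of $L_\kp^n-(I-\e^{-\dt L_\kp^n})^{-1}L_\kp^n$, and hence applies verbatim to this prediction step, giving $\hE_h(\widetilde{u}^{n+1},\widetilde{s}^{n+1}) \le \hE_h(u^n,s^n)$. Combining with the first part gives $\hE_h(\widetilde{u}^{n+1},\widetilde{s}^{n+1}) \le E_h(\uinit)$, and dropping $\frac{\eps^2}{2}\|\nabla_h\widetilde{u}^{n+1}\|^2 \ge 0$ gives $\widetilde{s}^{n+1} \le E_h(\uinit)$.

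There is essentially no serious obstacle here — the statement is a direct consequence of the two energy-dissipation theorems. The only point that requires a little care is verifying that the energy-decay estimate for a single GSAV-EI1 step is independent of how the starting state $(u^n,s^n)$ was produced, so that it can legitimately be applied to the predictor generated within the GSAV-EI2 iteration; this is immediate from inspecting the proof of Theorem \ref{thm_eisav1_es}, which nowhere uses that $(u^n,s^n)$ itself came from GSAV-EI1 rather than GSAV-EI2.
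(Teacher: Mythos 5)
Your proof is correct and follows essentially the same route as the paper: the bound on $s^n$ comes from iterating Theorem \ref{thm_eisav2_es} down to $\hE_h(u^0,s^0)=E_h(\uinit)$ and dropping the nonnegative gradient term, while the bound on $\widetilde{s}^{n+1}$ uses that the predictor is a single GSAV-EI1 step so that Theorem \ref{thm_eisav1_es} gives $\widetilde{s}^{n+1}\le\hE_h(\widetilde{u}^{n+1},\widetilde{s}^{n+1})\le\hE_h(u^n,s^n)\le E_h(\uinit)$. Your explicit remark that the one-step energy decay is independent of how $(u^n,s^n)$ was produced is a point the paper leaves implicit, but the arguments coincide.
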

\begin{proof}
Similar to the proof of Corollary \ref{cor_sav1_es},
the uniform boundedness of $\{s^n\}$ is a direct consequence of Theorem \ref{thm_eisav2_es}.
Since $\widetilde{s}^{n+1}$ is generated by the GSAV-EI1 scheme \eqref{eq_eisav1},
we have $\widetilde{s}^{n+1}\le \hE_h(\widetilde{u}^{n+1},\widetilde{s}^{n+1})\le \hE_h(u^n,s^n)$ according to Theorem \ref{thm_eisav1_es},
and thus, $\widetilde{s}^{n+1}\le E_h(\uinit)$.
\end{proof}

\begin{theorem}[MBP of GSAV-EI2]
\label{thm_eisav2_mbp}
If $\kp\ge \|f'\|_{C[-\beta,\beta]}$, then
the GSAV-EI2 scheme \eqref{eq_eisav2}  preserves the MBP unconditionally,
i.e., for any $\dt>0$, the time-discrete version of MBP \eqref{dismbp} is valid.
\end{theorem}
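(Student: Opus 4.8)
The plan is to argue by induction on $n$, exactly mirroring the structure of the proof of Theorem~\ref{thm_eisav1_mbp} but now accounting for the half-step operator $L_\kp^{n+\frac12}$ and the predictor $\widetilde u^{n+1}$. Assume $\|u^n\|_\infty\le\beta$ for some $n\ge0$; the base case $\|u^0\|_\infty=\|\uinit\|_\infty\le\beta$ is given. First I would invoke Theorem~\ref{thm_eisav1_mbp} applied to the prediction step: since $(\widetilde u^{n+1},\widetilde s^{n+1})$ is produced by one step of the GSAV-EI1 scheme \eqref{eq_eisav1} starting from $u^n$ with the same $\kp\ge\|f'\|_{C[-\beta,\beta]}$, we get $\|\widetilde u^{n+1}\|_\infty\le\beta$, and therefore $\|\widetilde u^{n+\frac12}\|_\infty=\|(u^n+\widetilde u^{n+1})/2\|_\infty\le\beta$ by convexity of the absolute value. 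In particular $g(\widetilde u^{n+\frac12},\widetilde s^{n+\frac12})>0$ by condition $(\Sigma_1)$, so $L_\kp^{n+\frac12}$ is self-adjoint positive definite.

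Next I would estimate the two terms in \eqref{eq_eisav2a}. By Lemma~\ref{lem_lapdiff} with $a=\dt\eps^2$ (noting $\Delta_h$ enters as $\eps^2\Delta_h$) and $b=\dt\kp g(\widetilde u^{n+\frac12},\widetilde s^{n+\frac12})$, we have $\|\e^{-\dt L_\kp^{n+\frac12}}\|_\infty\le\e^{-\dt\kp g(\widetilde u^{n+\frac12},\widetilde s^{n+\frac12})}$, and likewise $\|\e^{-(\dt-\theta)L_\kp^{n+\frac12}}\|_\infty\le\e^{-(\dt-\theta)\kp g(\widetilde u^{n+\frac12},\widetilde s^{n+\frac12})}$ for $\theta\in[0,\dt]$. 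For the nonlinear term, since $\|\widetilde u^{n+\frac12}\|_\infty\le\beta$ and $\kp\ge\|f'\|_{C[-\beta,\beta]}$, Lemma~\ref{lem_nonlinear} gives $\|f(\widetilde u^{n+\frac12})+\kp\widetilde u^{n+\frac12}\|_\infty\le\kp\beta$, hence from \eqref{Nkp_def2},
\[
\|N_\kp^{n+\frac12}(\widetilde u^{n+\frac12},\widetilde s^{n+\frac12})\|_\infty
= g(\widetilde u^{n+\frac12},\widetilde s^{n+\frac12})\,\|f(\widetilde u^{n+\frac12})+\kp\widetilde u^{n+\frac12}\|_\infty
\le \kp\beta\, g(\widetilde u^{n+\frac12},\widetilde s^{n+\frac12}).
\]
Writing $\gamma:=\kp g(\widetilde u^{n+\frac12},\widetilde s^{n+\frac12})>0$, I would then combine these in \eqref{eq_eisav2a} and use $\|u^n\|_\infty\le\beta$:
\[
\|u^{n+1}\|_\infty
\le \e^{-\dt\gamma}\beta
+ \Big(\int_0^\dt \e^{-(\dt-\theta)\gamma}\,\d\theta\Big)\beta\gamma
= \e^{-\dt\gamma}\beta + \frac{1-\e^{-\dt\gamma}}{\gamma}\cdot\beta\gamma = \beta,
\]
which is independent of $\dt>0$. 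By induction $\|u^n\|_\infty\le\beta$ for all $n\ge0$, which is the time-discrete MBP \eqref{dismbp}.

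The only genuinely new ingredient compared with the first-order case is the propagation of the bound through the predictor, and this is handled cleanly by citing Theorem~\ref{thm_eisav1_mbp} together with the convexity step for $\widetilde u^{n+\frac12}$; I do not expect a real obstacle here. One point to state carefully is that Lemma~\ref{lem_lapdiff} is applied with the scaling $\eps^2\Delta_h$ in place of $a\Delta_h$, which is legitimate since $a=\dt\eps^2\ge0$; this is exactly the same usage as in the proof of Theorem~\ref{thm_eisav1_mbp}. Note also that the $s^{n+1}$ update \eqref{eq_eisav2b} plays no role in the MBP argument, since the bound concerns only $\|u^{n+1}\|_\infty$.
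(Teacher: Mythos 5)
Your proposal is correct and follows essentially the same route as the paper: invoke Theorem~\ref{thm_eisav1_mbp} for the predictor to get $\|\widetilde{u}^{n+\frac{1}{2}}\|_\infty\le\beta$, then repeat the Lemma~\ref{lem_nonlinear}/Lemma~\ref{lem_lapdiff} contraction argument of the first-order case for the corrector step \eqref{eq_eisav2a}, which has the same form as \eqref{eq_eisav1a}. The only cosmetic difference is that the paper also cites Theorem~\ref{thm_eisav2_es} for an upper bound on $\widetilde{s}^{n+\frac{1}{2}}$, which, as you correctly observe, is not actually needed since positivity of $g$ suffices.
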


\begin{proof}
Suppose $(u^n,s^n)$ is given and $\|u^n\|_\infty\le\beta$ for some $n$.
According to Theorem \ref{thm_eisav1_mbp},
we know $\|\widetilde{u}^{n+1}\|_\infty\le\beta$,
and thus $\|\widetilde{u}^{n+\frac{1}{2}}\|_\infty\le\beta$.
We also have from Theorem \ref{thm_eisav2_es} that $\widetilde{s}^{n+\frac{1}{2}}\le E_h(\uinit)$.
Noting that \eqref{eq_eisav2a} has the same form as \eqref{eq_eisav1a},
the proof can be completed in the similar way to that of Theorem \ref{thm_eisav1_mbp}.
\end{proof}

\begin{corollary}
\label{cor_g_bound2}
Given any  fixed $h>0$ and $T>0$.
If $\kp\ge\|f'\|_{C[-\beta,\beta]}$ and $\|\uinit\|_\infty\le\beta$,
then there are two  constants $\widehat{G}_*>0$ and $G^*>0$ such that
\[
 \widehat{G}_* \le g(u^n,s^n) \le G^*, \;\;
\widehat{G}_* \le g(\widetilde{u}^{n+\frac{1}{2}},\widetilde{s}^{n+\frac{1}{2}}) \le G^*, \quad
0 \le n \le \lfloor T/\dt \rfloor-1,
\]
where $G^*$ is the same constant defined in Corollary \ref{cor_g_bound},
and $\widehat{G}_*$ depends on $C_*$, $C^*$, $|\Omega|$, $T$, $\uinit$, $\kp$, $\eps$, and $\|f\|_{C[-\beta,\beta]}$,
but is independent of $\dt$.
\end{corollary}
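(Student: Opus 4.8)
The plan is to follow the proof of Corollary~\ref{cor_g_bound} step by step, the only extra work being to accommodate the prediction variable $(\widetilde{u}^{n+\frac12},\widetilde{s}^{n+\frac12})$ and the high-order stabilization term in \eqref{eq_eisav2b}. First I would settle the common upper bound $G^*$. By Theorem~\ref{thm_eisav2_mbp} we have $\|u^n\|_\infty\le\beta$, and by Theorem~\ref{thm_eisav1_mbp} applied to the prediction step $\|\widetilde{u}^{n+1}\|_\infty\le\beta$, hence $\|\widetilde{u}^{n+\frac12}\|_\infty\le\beta$. Then \eqref{energy_bounds} together with the conditions $(\Sigma_1)$ and $(\Sigma_2)$ gives $\sigma(E_{1h}(u^n))\ge\sigma(-C_*)>0$ and $\sigma(E_{1h}(\widetilde{u}^{n+\frac12}))\ge\sigma(-C_*)>0$. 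By Corollary~\ref{cor_sav2_es}, $s^n\le E_h(\uinit)$ and $\widetilde{s}^{n+1}\le E_h(\uinit)$, so $\widetilde{s}^{n+\frac12}=(s^n+\widetilde{s}^{n+1})/2\le E_h(\uinit)$ as well; since $\sigma$ is nondecreasing, both numerators $\sigma(s^n)$ and $\sigma(\widetilde{s}^{n+\frac12})$ are bounded by $\sigma(E_h(\uinit))$. This yields $g(u^n,s^n)\le\sigma(E_h(\uinit))/\sigma(-C_*)=G^*$ and the same bound for $g(\widetilde{u}^{n+\frac12},\widetilde{s}^{n+\frac12})$, i.e.\ exactly the constant from Corollary~\ref{cor_g_bound}.

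The heart of the matter is the uniform lower bound, which again amounts to controlling how fast $\{s^n\}$ may decrease. As in \eqref{Lkp_specrad}, the bound $g(\widetilde{u}^{n+\frac12},\widetilde{s}^{n+\frac12})\le G^*$ just obtained together with \eqref{eig_laplace} gives $\rho(L_\kp^{n+\frac12})\le G^*\kp+8\eps^2/h^2=M_h$. Combining \eqref{eq_eisav2a} with Lemma~\ref{lem_expfuns} (using that $L_\kp^{n+\frac12}$ is symmetric), the MBP, and Lemma~\ref{lem_nonlinear} applied to $\widetilde{u}^{n+\frac12}$, I would derive $\|u^{n+1}-u^n\|\le\dt\beta|\Omega|^{\frac12}(M_h+G^*\kp)$, i.e.\ an $O(\dt)$ increment with a $\dt$-independent constant. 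In the update \eqref{eq_eisav2b} the Crank--Nicolson term is then bounded by $G^*F_0|\Omega|^{\frac12}\|u^{n+1}-u^n\|$ with $F_0=\|f\|_{C[-\beta,\beta]}$, and the high-order stabilization term by $\frac{\kp}{2}G^*\|u^{n+1}-\widetilde{u}^{n+1}\|\,\|u^{n+1}-u^n\|$, where $\|u^{n+1}-\widetilde{u}^{n+1}\|\le2\beta|\Omega|^{\frac12}$ by the MBP; both are $O(\dt)$. Hence $s^{n+1}\ge s^n-C\dt$ for a constant $C$ depending only on $C_*,C^*,|\Omega|,T,\uinit,\kp,\eps,\|f\|_{C[-\beta,\beta]}$ (and, for fixed $h$, on $h$). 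Recursion from $s^0=E_{1h}(\uinit)\ge-C_*$ gives $s^n\ge-C_*-CT=:\widehat{S}_*$. For the predicted variable, the estimate \eqref{s_lowbound_pf} for a single GSAV-EI1 step shows $\widetilde{s}^{n+1}\ge s^n-C'\dt\ge\widehat{S}_*-C'T$, whence $\widetilde{s}^{n+\frac12}=(s^n+\widetilde{s}^{n+1})/2$ is bounded below by a constant of the same type. Finally, using $E_{1h}(u^n)\le C^*$, $E_{1h}(\widetilde{u}^{n+\frac12})\le C^*$, and the monotonicity of $\sigma$, we obtain $g(u^n,s^n)\ge\sigma(\widehat{S}_*)/\sigma(C^*)=:\widehat{G}_*$ and likewise for $g(\widetilde{u}^{n+\frac12},\widetilde{s}^{n+\frac12})$, which is the assertion.

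The only genuinely delicate point is the bookkeeping: one must check that every constant feeding into $C$, $C'$ and $M_h$ is truly $\dt$-independent (they are, being assembled from $G^*$, $F_0$, $|\Omega|$, $\beta$, $\kp$, $\eps$, and the fixed $h$), and that the implications are carried out in the order: MBP for $u^n$ and $\widetilde{u}^{n+1}$, then the upper bound $G^*$, then the spectral bound $M_h$, then the $O(\dt)$ increments, then the lower bound for $s$ --- so that no estimate is invoked before it is available. Beyond this there is no analytic obstacle not already present in the proof of Corollary~\ref{cor_g_bound}.
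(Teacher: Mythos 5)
Your proposal is correct and follows essentially the same route as the paper: upper bound $G^*$ from the MBP and Corollary \ref{cor_sav2_es}, the spectral bound $M_h$, the $O(\dt)$ increment $\|u^{n+1}-u^n\|$, a per-step lower estimate for $s^{n+1}$ from \eqref{eq_eisav2b}, recursion, and \eqref{s_lowbound_pf} for $\widetilde{s}^{n+1}$. The only (harmless) deviation is that you bound the stabilization term via $\|u^{n+1}-\widetilde{u}^{n+1}\|\le 2\beta|\Omega|^{1/2}$ directly from the MBP, whereas the paper uses the triangle inequality on the two $O(\dt)$ increments to get $2T\beta|\Omega|^{1/2}(M_h+G^*\kp)$; both yield a $\dt$-independent constant and the same conclusion.
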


\begin{proof}
As done in the proof of Corollary \ref{cor_g_bound},
the upper bound  $G^*$ of $\{g(u^n,s^n)\}$ and $\{g(\widetilde{u}^{n+\frac{1}{2}},\widetilde{s}^{n+\frac{1}{2}})\}$
is the direct result of the monotonicity of $\sigma$ and Theorems \ref{thm_eisav2_es} and \ref{thm_eisav2_mbp},
and thus, $\rho(L_\kp^n)\le M_h$ and $\rho(L_\kp^{n+\frac{1}{2}})\le M_h$
with $M_h>0$ the same constant defined in \eqref{Lkp_specrad}.
For the existence of the lower bound $\widehat{G}_*$,  it suffices to show the existence of
the lower bounds of $\{s^n\}$ and $\{\widetilde{s}^{n+\frac{1}{2}}\}$.
Similar to the derivations in the proof of Corollary \ref{cor_g_bound}, we have
\[
\|u^{n+1} - u^n\| \le \dt \beta |\Omega|^{\frac{1}{2}} (M_h+G^*\kp), \qquad
\|\widetilde{u}^{n+1} - u^n\| \le \dt \beta |\Omega|^{\frac{1}{2}} (M_h+G^*\kp),
\]
and thus $\|u^{n+1}-\widetilde{u}^{n+1}\|\le 2T\beta |\Omega|^{\frac{1}{2}} (M_h+G^*\kp)$.
Then, we derive from \eqref{eq_eisav2b} that
\begin{align*}
s^{n+1}
& \ge s^n - g(\widetilde{u}^{n+\frac{1}{2}},\widetilde{s}^{n+\frac{1}{2}}) \|f(\widetilde{u}^{n+\frac{1}{2}})\| \|u^{n+1}-u^n\| \\
& \quad -\frac{\kp}{2}g(\widetilde{u}^{n+\frac{1}{2}},\widetilde{s}^{n+\frac{1}{2}})\|u^{n+1}-\widetilde{u}^{n+1}\|\|u^{n+1}-u^n\| \\
& \ge s^n - G^* F_0 \beta |\Omega| (M_h+G^*\kp)\dt
- G^*\kp T\beta^2 |\Omega| (M_h+G^*\kp)^2\dt.
\end{align*}
By recursion, we obtain
\[
s^n \ge -C_* - G^* F_0 \beta |\Omega| (M_h+G^*\kp)T - G^*\kp\beta^2 |\Omega| (M_h+G^*\kp)^2T^2.
\]
Finally, according to \eqref{s_lowbound_pf}, we also have
\[
\widetilde{s}^{n+1} \ge s^n - G^* F_0 \beta |\Omega| (M_h+G^*\kp)T,
\]
which completes the proof.
\end{proof}

\subsubsection{Temporal  error analysis}

The following lemma claims that
the temporal truncation error of \eqref{eq_eisav2} is of second order.
The proof involves some careful computations in calculus,
and we present it in Appendix \ref{app1}.

\begin{lemma}
\label{lem_etd2_error}
Given any fixed  $h>0$ and $T>0$ and
assume that  the exact solution $u_{h,e}$ is smooth enough on $[0,T]$.
Define
$
\widetilde{u}_{h,e}^{n+\frac{1}{2}}=(u_{h,e}(t_n)+u_{h,e}(t_{n+1}))/2$ and
$\widetilde{s}_{h,e}^{n+\frac{1}{2}}=(s_{h,e}(t_n)+s_{h,e}(t_{n+1}))/2$.
It holds that
\begin{subequations}
\label{etd2_trun}
\begin{align}
u_{h,e}(t_{n+1}) & = \e^{-\dt L_\kp^{n+\frac{1}{2}}} u_{h,e}(t_n) + \dt \phi_1(-\dt L_\kp^{n+\frac{1}{2}})
N_\kp^{n+\frac{1}{2}}(\widetilde{u}_{h,e}^{n+\frac{1}{2}},\widetilde{s}_{h,e}^{n+\frac{1}{2}}) + \dt R_{2u}^n, \label{etd2_truna} \\
s_{h,e}(t_{n+1}) & = s_{h,e}(t_n) - g(\widetilde{u}_{h,e}^{n+\frac{1}{2}},\widetilde{s}_{h,e}^{n+\frac{1}{2}})
\<f(\widetilde{u}_{h,e}^{n+\frac{1}{2}}), u_{h,e}(t_{n+1})-u_{h,e}(t_n)\> + \dt R_{2s}^n, \label{etd2_trunb}
\end{align}
\end{subequations}
with the truncation terms $R_{2u}^n$ and $R_{2s}^n$ satisfying
\begin{equation}
\label{etd2_trunerr}
\|R_{2u}^n\| \le C_{e,h} \dt^2,
\qquad |R_{2s}^n| \le C_{e,h} \dt^2,
\end{equation}
where the constant $C_{e,h}>0$ 
is independent of $\dt$.
\end{lemma}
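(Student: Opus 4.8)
The plan is to verify the two truncation identities by substituting the exact time-continuous solution $u_{h,e}$ into the exponential variation-of-constants representation \eqref{etd2_exact} and into the integrated form of \eqref{eq_semidisb}, and then estimating the resulting quadrature errors. Since the exact solution satisfies $g(u_{h,e}(t),s_{h,e}(t))\equiv1$ and $s_{h,e}(t)=E_{1h}(u_{h,e}(t))$, and since the two terms carrying $\kp g(\widetilde{u}_{h,e}^{n+\frac{1}{2}},\widetilde{s}_{h,e}^{n+\frac{1}{2}})$ cancel in $-L_\kp^{n+\frac{1}{2}}u_h+N_\kp^{n+\frac{1}{2}}(u_h,s_h)$, the identity
\begin{equation*}
u_{h,e}(t_{n+1}) = \e^{-\dt L_\kp^{n+\frac{1}{2}}} u_{h,e}(t_n) + \int_0^\dt \e^{-(\dt-\theta)L_\kp^{n+\frac{1}{2}}}\Psi(\theta)\,\d\theta, \qquad \Psi(\theta):= f(u_{h,e}(t_n+\theta))+\kp\gamma\,u_{h,e}(t_n+\theta),
\end{equation*}
holds exactly, where $\gamma:=g(\widetilde{u}_{h,e}^{n+\frac{1}{2}},\widetilde{s}_{h,e}^{n+\frac{1}{2}})>0$; likewise $s_{h,e}(t_{n+1})-s_{h,e}(t_n)=-\int_{t_n}^{t_{n+1}}\<f(u_{h,e}(t)),u_{h,e}'(t)\>\,\d t$. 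Using $\dt\phi_1(-\dt L_\kp^{n+\frac{1}{2}})=\int_0^\dt\e^{-(\dt-\theta)L_\kp^{n+\frac{1}{2}}}\,\d\theta$, it follows that $\dt R_{2u}^n=\int_0^\dt\e^{-(\dt-\theta)L_\kp^{n+\frac{1}{2}}}\big[\Psi(\theta)-N_\kp^{n+\frac{1}{2}}(\widetilde{u}_{h,e}^{n+\frac{1}{2}},\widetilde{s}_{h,e}^{n+\frac{1}{2}})\big]\,\d\theta$, while $\dt R_{2s}^n$ is the discrepancy between the $s$-integral above and the Crank--Nicolson-type term in \eqref{eq_eisav2b}.

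A key preliminary step is to show that the substituted stabilizing coefficient is a \emph{second}-order perturbation of $1$, i.e.\ $\gamma=1+O(\dt^2)$. Set $q(t):=E_{1h}(u_{h,e}(t))=s_{h,e}(t)$, which is a smooth scalar function on $[0,T]$ with values in a compact set. A Taylor expansion about $t_{n+\frac{1}{2}}$ gives $\widetilde{s}_{h,e}^{n+\frac{1}{2}}=\frac{1}{2}(q(t_n)+q(t_{n+1}))=q(t_{n+\frac{1}{2}})+O(\dt^2)$, and since $\widetilde{u}_{h,e}^{n+\frac{1}{2}}=u_{h,e}(t_{n+\frac{1}{2}})+O(\dt^2)$, the local Lipschitz continuity of $F$ gives $E_{1h}(\widetilde{u}_{h,e}^{n+\frac{1}{2}})=q(t_{n+\frac{1}{2}})+O(\dt^2)$ as well. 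Because $\sigma$ is $C^1$ (hence locally Lipschitz) and bounded away from $0$ on the compact range of $q$ by $(\Sigma_1)$--$(\Sigma_2)$, the quotient \eqref{g_def} yields $\gamma=\sigma(\widetilde{s}_{h,e}^{n+\frac{1}{2}})/\sigma(E_{1h}(\widetilde{u}_{h,e}^{n+\frac{1}{2}}))=1+O(\dt^2)$, and by the same token $\gamma f(\widetilde{u}_{h,e}^{n+\frac{1}{2}})=f(u_{h,e}(t_{n+\frac{1}{2}}))+O(\dt^2)$ and $N_\kp^{n+\frac{1}{2}}(\widetilde{u}_{h,e}^{n+\frac{1}{2}},\widetilde{s}_{h,e}^{n+\frac{1}{2}})=\Psi(\dt/2)+O(\dt^2)$. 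It is essential that the deviation of $\gamma$ from $1$ be $O(\dt^2)$ and not merely $O(\dt)$: an $O(\dt)$ error here, multiplied by $\int_0^\dt\e^{-(\dt-\theta)L_\kp^{n+\frac{1}{2}}}\,\d\theta=O(\dt)$ (resp.\ by $u_{h,e}(t_{n+1})-u_{h,e}(t_n)=O(\dt)$), would produce only first-order consistency.

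It then remains to control two ``exponential midpoint rule'' errors. For $R_{2u}^n$, split $\dt R_{2u}^n=\int_0^\dt\e^{-(\dt-\theta)L_\kp^{n+\frac{1}{2}}}\big(\Psi(\theta)-\Psi(\dt/2)\big)\,\d\theta+\big(\int_0^\dt\e^{-(\dt-\theta)L_\kp^{n+\frac{1}{2}}}\,\d\theta\big)\big(\Psi(\dt/2)-N_\kp^{n+\frac{1}{2}}(\widetilde{u}_{h,e}^{n+\frac{1}{2}},\widetilde{s}_{h,e}^{n+\frac{1}{2}})\big)$. The second summand is $O(\dt)\cdot O(\dt^2)=O(\dt^3)$, using $\|\phi_1(-\dt L_\kp^{n+\frac{1}{2}})\|\le1$ (by self-adjointness of $L_\kp^{n+\frac{1}{2}}$ together with Lemmas \ref{lem_matfun} and \ref{lem_expfuns}). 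For the first summand, Taylor-expand $\Psi(\theta)=\Psi(\dt/2)+(\theta-\dt/2)\Psi'(\dt/2)+\int_{\dt/2}^{\theta}(\theta-\tau)\Psi''(\tau)\,\d\tau$; the quadratic-remainder part is $O(\dt^3)$ by $\|\e^{-(\dt-\theta)L_\kp^{n+\frac{1}{2}}}\|\le1$ and $\int_0^\dt(\theta-\dt/2)^2\,\d\theta=\dt^3/12$, while the linear part reduces, eigenvalue by eigenvalue, to the scalar identity
\begin{equation*}
\int_0^\dt \e^{-(\dt-\theta)a}\big(\theta-\tfrac{\dt}{2}\big)\,\d\theta = \frac{a\dt^3}{12}+O\big(a^2\dt^4\big), \qquad a>0,
\end{equation*}
which, combined with the $\dt$-independent spectral bound $\rho(L_\kp^{n+\frac{1}{2}})\le M_h$ from \eqref{Lkp_specrad} and Corollary \ref{cor_g_bound2}, yields an $O(\dt^3)$ operator-norm estimate; hence $\|R_{2u}^n\|\le C_{e,h}\dt^2$. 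For $R_{2s}^n$, rewrite the Crank--Nicolson-type term as $-\<\gamma f(\widetilde{u}_{h,e}^{n+\frac{1}{2}}),\int_{t_n}^{t_{n+1}}u_{h,e}'(t)\,\d t\>=-\int_{t_n}^{t_{n+1}}\<f(u_{h,e}(t_{n+\frac{1}{2}})),u_{h,e}'(t)\>\,\d t+O(\dt^3)$, subtract the exact $s$-identity, and Taylor-expand both $t\mapsto f(u_{h,e}(t))$ and $t\mapsto u_{h,e}'(t)$ about $t_{n+\frac{1}{2}}$; the first-order terms integrate to zero since $\int_{t_n}^{t_{n+1}}(t-t_{n+\frac{1}{2}})\,\d t=0$, leaving $\dt R_{2s}^n=O(\dt^3)$, i.e.\ $|R_{2s}^n|\le C_{e,h}\dt^2$. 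In all estimates $C_{e,h}$ depends only on $h$ (through $M_h$), $\kp$, $\eps$, $|\Omega|$, and on bounds for $u_{h,e}$ and a few derivatives of $f$ on $[-\beta,\beta]$, but not on $\dt$.

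The main obstacle is the exponential midpoint-rule estimate: the naive bound for $\int_0^\dt\e^{-(\dt-\theta)a}(\theta-\tfrac{\dt}{2})\,\d\theta$ is only $O(\dt^2)$, and one must exploit the exact cancellation that upgrades it to $O(a\dt^3)$, uniformly for $a$ in the (bounded) spectrum of $L_\kp^{n+\frac{1}{2}}$ --- this is precisely where evaluating the nonlinearity at the midpoint $\theta=\dt/2$ pays off. A secondary, more bookkeeping-type difficulty is establishing $\gamma=1+O(\dt^2)$, which is exactly the feature distinguishing the generalized-SAV construction from a plain exponential integrator and which relies on the smoothness of $\sigma$ and $F$ and on the arithmetic-mean definition of $\widetilde{u}_{h,e}^{n+\frac{1}{2}}$ and $\widetilde{s}_{h,e}^{n+\frac{1}{2}}$.
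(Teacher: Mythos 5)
Your proof is correct and follows essentially the same route as the paper's: the variation-of-constants identity holds exactly for $u_{h,e}$ because the stabilization terms in $L_\kp^{n+\frac{1}{2}}$ and $N_\kp^{n+\frac{1}{2}}$ cancel, the truncation term is the quadrature error of a midpoint-type approximation of the nonlinearity, its first-order part vanishes by symmetry once $\e^{-(\dt-\theta)L_\kp^{n+\frac{1}{2}}}$ is replaced by $I$, and the correction $\e^{-(\dt-\theta)L_\kp^{n+\frac{1}{2}}}-I$ contributes the extra power of $\dt$ through the $h$-dependent spectral bound $M_h$ (your explicit scalar quadrature identity and the paper's operator bound $\|\e^{-(\dt-\theta)L_\kp^{n+\frac{1}{2}}}-I\|\le M_h(\dt-\theta)$ are interchangeable, as are your expansion about $t_{n+\frac{1}{2}}$ and the paper's joint Taylor expansion of $N_\kp^{n+\frac{1}{2}}(v,r)$ about the arithmetic means). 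One small notational point: the stabilization coefficient appearing in $L_\kp^{n+\frac{1}{2}}$, in $N_\kp^{n+\frac{1}{2}}$, and hence in your $\Psi$ is $\kp g(\widetilde{u}^{n+\frac{1}{2}},\widetilde{s}^{n+\frac{1}{2}})$ (evaluated at the numerical predictor), not $\kp\gamma$ with $\gamma=g(\widetilde{u}_{h,e}^{n+\frac{1}{2}},\widetilde{s}_{h,e}^{n+\frac{1}{2}})$; since the same uniformly bounded coefficient occurs in both $L_\kp^{n+\frac{1}{2}}$ and $N_\kp^{n+\frac{1}{2}}$ the exact cancellation and all of your estimates go through unchanged, and your $\gamma=1+O(\dt^2)$ observation is needed only for the factor $g(\widetilde{u}_{h,e}^{n+\frac{1}{2}},\widetilde{s}_{h,e}^{n+\frac{1}{2}})f(\widetilde{u}_{h,e}^{n+\frac{1}{2}})$.
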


The error functions $e_u^n$ and $e_s^n$ are defined by \eqref{errorfuns}.
In addition, we define
\[
\widetilde{e}_u^{n+1} = \widetilde{u}^{n+1} - u_{h,e}(t_{n+1}), \qquad
\widetilde{e}_s^{n+1} = \widetilde{s}^{n+1} - s_{h,e}(t_{n+1}).
\]
We first present a lemma on the estimates with respect to $\widetilde{e}_u^{n+1}$ and $\widetilde{e}_s^{n+1}$.
The proof is similar to that of Theorem \ref{thm_etd1_error}
and will be given in Appendix \ref{app2}.

\begin{lemma}
\label{lem_etd20_error}
Given any fixed  $h>0$ and $T>0$, and let $\kp\ge \|f'\|_{C[-\beta,\beta]}$.
Assume that  the exact solution $u_{h,e}$ is smooth enough on $[0,T]$ and $\|\uinit\|_\infty\le\beta$.
If $\dt$ is sufficiently small, then it holds that
\begin{equation}
\label{etd2_err_pf11}
\|\widetilde{e}_u^{n+1}\|^2 + |\widetilde{e}_s^{n+1}|^2
\le \widetilde{C}_h (\|e_u^n\|^2 + |e_s^n|^2) + \widetilde{C}_h C_{e,h}^2\dt^4,\quad 0 \le n \le \lfloor T/\dt \rfloor,
\end{equation}
where the constant $\widetilde{C}_h>0$ 
is independent of $\dt$.
\end{lemma}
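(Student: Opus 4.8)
The plan is to treat this as a purely local (one-step) estimate: $(\widetilde u^{n+1},\widetilde s^{n+1})$ is one step of the GSAV-EI1 scheme \eqref{eq_eisav1} started from the GSAV-EI2 iterate $(u^n,s^n)$, so I only need to track how the incoming error $(e_u^n,e_s^n)$ and the GSAV-EI1 local truncation propagate across that single step. I would run the argument parallel to the proof of Theorem \ref{thm_etd1_error}, but since no summation in $n$ occurs, neither the discrete Gronwall inequality nor the $\delta_t$-weighted energy identity is needed --- plain norm estimates on the two error equations suffice. This simplification is in fact essential: it is exactly what yields the sharper $\mathcal{O}(\dt^4)$ squared-norm truncation contribution in \eqref{etd2_err_pf11}, whereas the $\delta_t$-weighted Young splitting of Theorem \ref{thm_etd1_error} would only deliver $\mathcal{O}(\dt^3)$.

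\emph{The $u$-component.} Since the predictor is generated by \eqref{eq_eisav1a} with data $(u^n,s^n)$, the computation leading to \eqref{etd1_err} (now with $\widetilde u^{n+1}$ in place of $u^{n+1}$) gives
\begin{align*}
\widetilde e_u^{n+1} &= \e^{-\dt L_\kp^n}e_u^n+\dt\,\phi_1(-\dt L_\kp^n)\bigl[N_\kp^n(u^n,s^n)-N_\kp^n(u_{h,e}(t_n),s_{h,e}(t_n))\bigr] \\
&\quad -\int_0^\dt\e^{-(\dt-\theta)L_\kp^n}R_{1u}^n(\theta)\,\d\theta .
\end{align*}
I would then take $\|\cdot\|$ on both sides. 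Because $L_\kp^n$ is self-adjoint and positive definite, $\|\e^{-\dt L_\kp^n}\|\le1$; by Lemma \ref{lem_expfuns}, $\|\phi_1(-\dt L_\kp^n)\|\le1$; and --- this is the key point for the order --- $\bigl\|\int_0^\dt\e^{-(\dt-\theta)L_\kp^n}R_{1u}^n(\theta)\,\d\theta\bigr\|\le\dt\sup_{\theta\in(0,\dt)}\|R_{1u}^n(\theta)\|\le C_{e,h}\dt^2$ by \eqref{eisav1trunerr}. Splitting $N_\kp^n(u^n,s^n)-N_\kp^n(u_{h,e}(t_n),s_{h,e}(t_n))=\bigl[g(u^n,s^n)f(u^n)-g(u_{h,e}(t_n),s_{h,e}(t_n))f(u_{h,e}(t_n))\bigr]+\kp g(u^n,s^n)e_u^n$, bounding the first bracket by $C_g(\|e_u^n\|+|e_s^n|)$ via Lemma \ref{lem_sav1_nonlinear} and using $g(u^n,s^n)\le G^*$ from Corollary \ref{cor_g_bound2}, I reach
\[
\|\widetilde e_u^{n+1}\|\le\bigl(1+(C_g+G^*\kp)\dt\bigr)\|e_u^n\|+C_g\dt\,|e_s^n|+C_{e,h}\dt^2 .
\]

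\emph{The $s$-component.} Next I would subtract \eqref{sav1trunb} from the predictor relation \eqref{eq_eisav1b}, $\widetilde s^{n+1}=s^n-g(u^n,s^n)\langle f(u^n),\widetilde u^{n+1}-u^n\rangle$; regrouping so that $(\widetilde u^{n+1}-u^n)-(u_{h,e}(t_{n+1})-u_{h,e}(t_n))=\widetilde e_u^{n+1}-e_u^n$ appears yields
\begin{align*}
\widetilde e_s^{n+1} &= e_s^n-g(u^n,s^n)\langle f(u^n),\widetilde e_u^{n+1}-e_u^n\rangle-\dt R_{1s}^n \\
&\quad +\bigl\langle g(u_{h,e}(t_n),s_{h,e}(t_n))f(u_{h,e}(t_n))-g(u^n,s^n)f(u^n),\,u_{h,e}(t_{n+1})-u_{h,e}(t_n)\bigr\rangle .
\end{align*}
Taking $|\cdot|$: by the MBP (Theorem \ref{thm_eisav2_mbp}) and Corollary \ref{cor_g_bound2}, $g(u^n,s^n)\|f(u^n)\|\le G^*\|f\|_{C[-\beta,\beta]}\,|\Omega|^{1/2}$, while $\|\widetilde e_u^{n+1}-e_u^n\|\le\|\widetilde e_u^{n+1}\|+\|e_u^n\|$ is controlled by the $u$-bound above; the remaining inner product is at most $C_g(\|e_u^n\|+|e_s^n|)\,\|u_{h,e}(t_{n+1})-u_{h,e}(t_n)\|$ by Lemma \ref{lem_sav1_nonlinear}, with $\|u_{h,e}(t_{n+1})-u_{h,e}(t_n)\|\le\dt\sup_{[0,T]}\|(u_{h,e})_t\|$ by smoothness; and $\dt|R_{1s}^n|\le C_{e,h}\dt^2$ by \eqref{eisav1trunerr}. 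Hence, for, say, $\dt\le1$, with a $\dt$-independent $\widehat C_h$,
\[
|\widetilde e_s^{n+1}|\le\widehat C_h\bigl(\|e_u^n\|+|e_s^n|\bigr)+\widehat C_h\,C_{e,h}\dt^2 .
\]

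\emph{Conclusion and the main point.} Squaring the two displayed bounds, using $(a+b+c)^2\le3(a^2+b^2+c^2)$ and $\dt\le1$ to fold the powers of $\dt$ into the constants, and adding, I obtain \eqref{etd2_err_pf11} with $\widetilde C_h$ the maximum of the coefficients produced; all constants are $\dt$-independent since $G^*,\widehat G_*$ come from Corollary \ref{cor_g_bound2}, $C_g$ from Lemma \ref{lem_sav1_nonlinear}, and $C_{e,h}$ from \eqref{eisav1trunerr}, with $h$ held fixed. There is no genuine obstacle; the one thing to get right is to keep $\int_0^\dt\e^{-(\dt-\theta)L_\kp^n}R_{1u}^n(\theta)\,\d\theta$ as a single $\mathcal{O}(\dt^2)$ quantity --- using $\sup_\theta\|R_{1u}^n(\theta)\|=\mathcal{O}(\dt)$ and the contractivity $\|\e^{-\theta L_\kp^n}\|\le1$ --- rather than estimating it through a $\delta_t$-weighted term as in Theorem \ref{thm_etd1_error}, because only then does the truncation enter as $C_{e,h}^2\dt^4$. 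That sharper order is precisely what ensures the predictor error $\widetilde e_u^{n+1}$, which feeds into the corrector \eqref{eq_eisav2a} through an $\mathcal{O}(\dt)$ prefactor, does not degrade the second-order accuracy of the GSAV-EI2 scheme.
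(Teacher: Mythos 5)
Your proposal is correct and follows essentially the same route as the paper's proof in Appendix \ref{app2}: a purely one-step estimate of the two predictor error equations \eqref{etd2_err1}, with no Gronwall argument, using Lemma \ref{lem_sav1_nonlinear}, Corollary \ref{cor_g_bound2} and \eqref{eisav1trunerr} so that the truncation enters at order $\dt^2$ in norm and hence $C_{e,h}^2\dt^4$ after squaring. The only difference is cosmetic: the paper tests the error equations with $2\widetilde{e}_u^{n+1}$ and $2\widetilde{e}_s^{n+1}$ and works with squared norms throughout (retaining $\|\widetilde{e}_u^{n+1}-e_u^n\|^2$ explicitly to absorb the corresponding term in the $s$-estimate), whereas you take norms directly and square at the end, handling $\|\widetilde{e}_u^{n+1}-e_u^n\|$ by the triangle inequality.
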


\begin{theorem}[Temporal error estimate of GSAV-EI2]
\label{thm_etd2_error}
Given any fixed  $h>0$ and $T>0$, and let $\kp\ge \|f'\|_{C[-\beta,\beta]}$.
Assume that  the exact solution $u_{h,e}$ is smooth enough on $[0,T]$ and $\|\uinit\|_\infty\le\beta$.
If $\dt$ is sufficiently small, then we have the following error estimate
for the GSAV-EI2 scheme \eqref{eq_eisav2}:
\begin{equation}
\label{etd2_error}
\|u^n-u_{h,e}(t_n)\| + |s^n-s_{h,e}(t_n)| \le C_{h,2}\dt^2,\quad 0 \le n \le \lfloor T/\dt \rfloor,
\end{equation}
where the constant $C_{h,2}>0$ is independent of $\dt$.
\end{theorem}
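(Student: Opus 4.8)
The plan is to mimic the structure of the proof of Theorem~\ref{thm_etd1_error}, but now with the predictor-corrector structure in mind, using Lemma~\ref{lem_etd2_error} (the second-order truncation estimate) and Lemma~\ref{lem_etd20_error} (which controls the predictor errors $\widetilde e_u^{n+1}$ and $\widetilde e_s^{n+1}$ in terms of $e_u^n$ and $e_s^n$) as the two key inputs. First I would subtract \eqref{etd2_truna} from \eqref{eq_eisav2a} to obtain an equation for $e_u^{n+1}$ of the form
\[
e_u^{n+1} = \e^{-\dt L_\kp^{n+\frac12}} e_u^n
+ \dt\,\phi_1(-\dt L_\kp^{n+\frac12})
\big[N_\kp^{n+\frac12}(\widetilde u^{n+\frac12},\widetilde s^{n+\frac12}) - N_\kp^{n+\frac12}(\widetilde u_{h,e}^{n+\frac12},\widetilde s_{h,e}^{n+\frac12})\big]
- \dt R_{2u}^n.
\]
Then, exactly as in Theorem~\ref{thm_etd1_error}, I would act with $I+\dt L_\kp^{n+\frac12}$ on both sides, take the discrete inner product with $\delta_t e_u^{n+1}=(e_u^{n+1}-e_u^n)/\dt$, and use the identity $\|e_u^{n+1}-e_u^n\|^2 = \|e_u^{n+1}\|^2-\|e_u^n\|^2-2\dt\<e_u^n,\delta_t e_u^{n+1}\>$, Young's inequality, and Corollary~\ref{cor_g_bound2} (which gives $\widehat G_*\le g(\widetilde u^{n+\frac12},\widetilde s^{n+\frac12})\le G^*$ and $\rho(L_\kp^{n+\frac12})\le M_h$, hence $\|I+\dt L_\kp^{n+\frac12}\|\le2$ for $\dt\le M_h^{-1}$ and $\|\e^{-\dt L_\kp^{n+\frac12}}-I\|\le M_h\dt$) to arrive at an inequality of the shape
\[
\widehat G_*\kp\,\|e_u^{n+1}\|^2 - \widehat G_*\kp\,\|e_u^n\|^2 + \tfrac{\dt}{2}\|\delta_t e_u^{n+1}\|^2
\le C\dt\,\big(\|e_u^n\|^2 + \|\widetilde e_u^{n+1}\|^2 + |\widetilde e_s^{n+1}|^2\big) + C\dt^5,
\]
where the nonlinear term $\|N_\kp^{n+\frac12}(\widetilde u^{n+\frac12},\widetilde s^{n+\frac12}) - N_\kp^{n+\frac12}(\widetilde u_{h,e}^{n+\frac12},\widetilde s_{h,e}^{n+\frac12})\|$ is bounded using (the obvious predictor-level analogue of) Lemma~\ref{lem_sav1_nonlinear} together with $\|R_{2u}^n\|\le C_{e,h}\dt^2$.

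Next I would handle the auxiliary variable. Subtracting \eqref{etd2_trunb} from \eqref{eq_eisav2b} gives an equation for $e_s^{n+1}-e_s^n$; multiplying by $2e_s^{n+1}$ and using $|e_s^{n+1}|^2-|e_s^n|^2+|e_s^{n+1}-e_s^n|^2 = 2e_s^{n+1}(e_s^{n+1}-e_s^n)$, I would estimate the three resulting terms just as in \eqref{sav1err_pf7a}--\eqref{sav1err_pf7c}: the Crank--Nicolson nonlinear difference term by Lemma~\ref{lem_sav1_nonlinear} (yielding $\|\widetilde e_u^{n+\frac12}\| + |\widetilde e_s^{n+\frac12}|$-type bounds, where $\widetilde e_u^{n+\frac12} = (e_u^n + \widetilde e_u^{n+1})/2$, etc.), the term involving $\delta_t e_u^{n+1}$ by Young's inequality with a $\tfrac{\dt}{2}\|\delta_t e_u^{n+1}\|^2$ buffer so it cancels against the corresponding term from the $u$-estimate, the extra high-order stabilization term $\tfrac{\kp}{2}g(\widetilde u^{n+\frac12},\widetilde s^{n+\frac12})\<u^{n+1}-\widetilde u^{n+1},u^{n+1}-u^n\>$ using $u^{n+1}-\widetilde u^{n+1} = e_u^{n+1} - \widetilde e_u^{n+1}$ and the known $O(\dt)$ bound on $\|u^{n+1}-u^n\|$ from Corollary~\ref{cor_g_bound2}, and finally the truncation term by $-2\dt R_{2s}^n e_s^{n+1}\le \dt|R_{2s}^n|^2 + \dt|e_s^{n+1}|^2$ with $|R_{2s}^n|\le C_{e,h}\dt^2$. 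Adding this to the $u$-estimate, absorbing the $\delta_t e_u^{n+1}$ terms, and then invoking Lemma~\ref{lem_etd20_error} to replace $\|\widetilde e_u^{n+1}\|^2+|\widetilde e_s^{n+1}|^2$ by $\widetilde C_h(\|e_u^n\|^2+|e_s^n|^2)+\widetilde C_h C_{e,h}^2\dt^4$, I obtain
\[
\widehat G_*\kp\big(\|e_u^{n+1}\|^2-\|e_u^n\|^2\big) + \big(|e_s^{n+1}|^2-|e_s^n|^2\big)
\le C_3\dt\big(\|e_u^n\|^2 + |e_s^n|^2 + |e_s^{n+1}|^2\big) + C_4\,\dt^5,
\]
with constants independent of $\dt$ (they depend on $h$, $T$, $\kp$, $\eps$, $u_{h,e}$, $C_*$, $|\Omega|$, and the relevant norms of $f$). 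Applying the discrete Gronwall inequality, together with $e_u^0=0$ and $e_s^0=0$, yields $\widehat G_*\kp\|e_u^n\|^2 + |e_s^n|^2 \le \widetilde C_{h,2}\dt^4$ for $0\le n\le\lfloor T/\dt\rfloor$, and taking $C_{h,2} = \sqrt{\widetilde C_{h,2}/\min\{1,\widehat G_*\kp\}}$ gives \eqref{etd2_error}.

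The main obstacle, and the reason the bound in Lemma~\ref{lem_etd20_error} is stated with an $O(\dt^4)$ term rather than an $O(\dt^2)$ one, is that the predictor $(\widetilde u^{n+1},\widetilde s^{n+1})$ is only first-order accurate as a one-step approximation, so $\|\widetilde e_u^{n+1}\|$ is \emph{not} $O(\dt^2)$ in general — it is only $\|e_u^n\| + O(\dt^2)$; one must therefore be careful that every place where $\widetilde e_u^{n+1}$ or $\widetilde e_s^{n+1}$ enters the corrector error equation is accompanied by an extra factor of $\dt$ (coming from $\dt\phi_1$, from $\|u^{n+1}-u^n\| = O(\dt)$ in the stabilization term, or from the Crank--Nicolson increment $u_{h,e}(t_{n+1})-u_{h,e}(t_n) = O(\dt)$), so that the $\widetilde C_h(\|e_u^n\|^2+|e_s^n|^2)$ contribution is genuinely multiplied by $\dt$ and can be absorbed by Gronwall, while the $\widetilde C_h C_{e,h}^2\dt^4$ contribution is multiplied by $\dt$ to give the harmless $O(\dt^5)$ consistency term. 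Tracking these powers of $\dt$ carefully through the $e_s^{n+1}$ estimate — especially the high-order artificial stabilization term in \eqref{eq_eisav2b}, which has no analogue in the first-order scheme — is the delicate bookkeeping step; everything else is a direct adaptation of the GSAV-EI1 argument.
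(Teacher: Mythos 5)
Your proposal is correct and follows essentially the same route as the paper's proof: the same error equation for $e_u^{n+1}$ tested against $\delta_t e_u^{n+1}$ after acting with $I+\dt L_\kp^{n+\frac12}$, the same treatment of the $e_s$ equation (including the zero-term insertion that splits the stabilization term into $\<u^{n+1}-\widetilde u^{n+1}, e_u^{n+1}-e_u^n\>$ and $\<e_u^{n+1}-\widetilde e_u^{n+1}, u_{h,e}(t_{n+1})-u_{h,e}(t_n)\>$ pieces), the same use of Lemma \ref{lem_etd20_error} to absorb the predictor errors, and the same Gronwall conclusion with the constant $C_{h,2}=\sqrt{\tilde C_{h,2}/\min\{1,\widehat G_*\kp\}}$. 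Your closing remark about why every occurrence of $\widetilde e_u^{n+1},\widetilde e_s^{n+1}$ carries an extra factor of $\dt$ is exactly the bookkeeping the paper performs in passing from \eqref{etd2_err_pf8} to \eqref{ddd}.
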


\begin{proof}
The difference between \eqref{eq_eisav2a} and \eqref{etd2_truna} gives
\[
e_u^{n+1} = \e^{-\dt L_\kp^{n+\frac{1}{2}}} e_u^n + \dt \phi_1(-\dt L_\kp^{n+\frac{1}{2}})
[N_\kp^{n+\frac{1}{2}}(\widetilde{u}^{n+\frac{1}{2}},\widetilde{s}^{n+\frac{1}{2}})
- N_\kp^{n+\frac{1}{2}}(\widetilde{u}_{h,e}^{n+\frac{1}{2}},\widetilde{s}_{h,e}^{n+\frac{1}{2}})]
- \dt R_{2u}^n.
\]
Acting $I+\dt L_\kp^{n+\frac{1}{2}}$ on both sides of the above equation, we obtain
\begin{align}
\label{qqq}
& (1+\dt\kp g(\widetilde{u}^{n+\frac{1}{2}},\widetilde{s}^{n+\frac{1}{2}})) (e_u^{n+1}-e_u^n) - \dt\eps^2(\Delta_h e_u^{n+1} - \Delta_h e_u^n) \\
& \quad = \dt (I+\dt L_\kp^{n+\frac{1}{2}}) \phi_1(-\dt L_\kp^{n+\frac{1}{2}})
[N_\kp^{n+\frac{1}{2}}(\widetilde{u}^{n+\frac{1}{2}},\widetilde{s}^{n+\frac{1}{2}})
- N_\kp^{n+\frac{1}{2}}(\widetilde{u}_{h,e}^{n+\frac{1}{2}},\widetilde{s}_{h,e}^{n+\frac{1}{2}})] \nn \\
& \quad\quad + (I+\dt L_\kp^{n+\frac{1}{2}})(\e^{-\dt L_\kp^{n+\frac{1}{2}}}-I) e_u^n
- \dt (I+\dt L_\kp^{n+\frac{1}{2}}) R_{2u}^n.\nn
\end{align}
Taking the discrete inner product of \eqref{qqq} with $\delta_te_u^{n+1}$, similarly to \eqref{etd1_err_pf1_lhs},
we estimate the left-hand side (LHS) of the above identity as
\begin{align*}
\text{LHS}
& = \dt \|\delta_t e_u^{n+1}\|^2 + \kp g(\widetilde{u}^{n+\frac{1}{2}},\widetilde{s}^{n+\frac{1}{2}}) \|e_u^{n+1}-e_u^n\|^2
+ \eps^2\|\nabla_h e_u^{n+1} - \nabla_h e_u^n\|^2 \\
& \ge \dt \|\delta_t e_u^{n+1}\|^2 + \kp \widehat{G}_* \|e_u^{n+1}-e_u^n\|^2 \nn \\
& = \dt \|\delta_t e_u^{n+1}\|^2 + \widehat{G}_*\kp \|e_u^{n+1}\|^2 - \widehat{G}_*\kp \|e_u^n\|^2
- 2\widehat{G}_*\kp\dt \<e_u^n,\delta_te_u^{n+1}\> \nn \\
& \ge \frac{7\dt}{8} \|\delta_t e_u^{n+1}\|^2 + \widehat{G}_*\kp \|e_u^{n+1}\|^2 - \widehat{G}_*\kp \|e_u^n\|^2
- 8\widehat{G}_*^2\kp^2\dt \|e_u^n\|^2, \nn
\end{align*}
and, when $\dt\le M_h^{-1}$, we estimate the right-hand side (RHS) of \eqref{qqq} as
\begin{align*}
\text{RHS}
& = \dt \<(I+\dt L_\kp^{n+\frac{1}{2}}) \phi_1(-\dt L_\kp^{n+\frac{1}{2}})
[N_\kp^{n+\frac{1}{2}}(\widetilde{u}^{n+\frac{1}{2}},\widetilde{s}^{n+\frac{1}{2}})
- N_\kp^{n+\frac{1}{2}}(\widetilde{u}_{h,e}^{n+\frac{1}{2}},\widetilde{s}_{h,e}^{n+\frac{1}{2}})],\delta_te_u^{n+1}\> \nn \\
& \quad + \<(I+\dt L_\kp^{n+\frac{1}{2}})(\e^{-\dt L_\kp^{n+\frac{1}{2}}}-I) e_u^n,\delta_te_u^{n+1}\>
- \dt \<(I+\dt L_\kp^{n+\frac{1}{2}}) R_{2u}^n,\delta_te_u^{n+1}\> \nn \\
& \le 2\dt \|N_\kp^{n+\frac{1}{2}}(\widetilde{u}^{n+\frac{1}{2}},\widetilde{s}^{n+\frac{1}{2}})
- N_\kp^{n+\frac{1}{2}}(\widetilde{u}_{h,e}^{n+\frac{1}{2}},\widetilde{s}_{h,e}^{n+\frac{1}{2}})\| \|\delta_t e_u^{n+1}\|  \\
& \quad + 2M_h\dt \|e_u^n\| \|\delta_t e_u^{n+1}\|
+ 2\dt \|R_{2u}^n\| \|\delta_t e_u^{n+1}\| \nn\\
& \le 8\dt \|N_\kp^{n+\frac{1}{2}}(\widetilde{u}^{n+\frac{1}{2}},\widetilde{s}^{n+\frac{1}{2}})
- N_\kp^{n+\frac{1}{2}}(\widetilde{u}_{h,e}^{n+\frac{1}{2}},\widetilde{s}_{h,e}^{n+\frac{1}{2}})\|^2 \nn \\
& \quad + 8M_h^2\dt \|e_u^n\|^2 + 8\dt \|R_{2u}^n\|^2 + \frac{3\dt}{8} \|\delta_t e_u^{n+1}\|^2. \nn
\end{align*}
Note that
\begin{align*}
& N_\kp^{n+\frac{1}{2}}(\widetilde{u}^{n+\frac{1}{2}},\widetilde{s}^{n+\frac{1}{2}})
- N_\kp^{n+\frac{1}{2}}(\widetilde{u}_{h,e}^{n+\frac{1}{2}},\widetilde{s}_{h,e}^{n+\frac{1}{2}}) \\
& \qquad = g(\widetilde{u}^{n+\frac{1}{2}},\widetilde{s}^{n+\frac{1}{2}}) f(\widetilde{u}^{n+\frac{1}{2}})
- g(\widetilde{u}_{h,e}^{n+\frac{1}{2}},\widetilde{s}_{h,e}^{n+\frac{1}{2}}) f(\widetilde{u}_{h,e}^{n+\frac{1}{2}})
+ \kp g(\widetilde{u}^{n+\frac{1}{2}},\widetilde{s}^{n+\frac{1}{2}}) \widetilde{e}_u^{n+\frac{1}{2}},
\end{align*}
and $\widetilde{u}^{n+\frac{1}{2}},\widetilde{u}_{h,e}^{n+\frac{1}{2}},
\widetilde{s}^{n+\frac{1}{2}},\widetilde{s}_{h,e}^{n+\frac{1}{2}}$ are all bounded uniformly
according to the energy dissipation law and the MBP.
Similar to the proof of Lemma \ref{lem_sav1_nonlinear}, we can obtain
\begin{align}\label{aaa}
&\|N_\kp^{n+\frac{1}{2}}(\widetilde{u}^{n+\frac{1}{2}},\widetilde{s}^{n+\frac{1}{2}})
- N_\kp^{n+\frac{1}{2}}(\widetilde{u}_{h,e}^{n+\frac{1}{2}},\widetilde{s}_{h,e}^{n+\frac{1}{2}})\|\\
&\qquad\le \frac{C_{g}+G^*\kp}{2} (\|e_u^n\|+\|\widetilde{e}_u^{n+1}\|) + \frac{C_{g}}{2} (|e_s^n|+|\widetilde{e}_s^{n+1}|).\nn
\end{align}
Combining \eqref{aaa} with estimates of the LHS and RHS of \eqref{qqq}, we get
\begin{align}
& \widehat{G}_*\kp \|e_u^{n+1}\|^2 - \widehat{G}_*\kp \|e_u^n\|^2 + \frac{\dt}{2} \|\delta_t e_u^{n+1}\|^2 \label{etd2_err_pf3} \\
& \qquad \le [8(C_{g}+G^*\kp)^2+8M_h^2+8\widehat{G}_*^2\kp^2]\dt \|e_u^n\|^2 + 8(C_{g}+G^*\kp)^2\dt \|\widetilde{e}_u^{n+1}\|^2  \nn \\
& \qquad\quad  + 8C_{g}^2\dt (|e_s^n|^2+|\widetilde{e}_s^{n+1}|^2) + 8\dt \|R_{2u}^n\|^2. \nn
\end{align}

For the error equation \eqref{etd2_trunb}, we can add a zero-value term to the right-hand side to give
\begin{align}\label{etd2_err_pf4}
& s_{h,e}(t_{n+1}) - s_{h,e}(t_n) \\
&\quad=  - g(\widetilde{u}_{h,e}^{n+\frac{1}{2}},\widetilde{s}_{h,e}^{n+\frac{1}{2}})
\<f(\widetilde{u}_{h,e}^{n+\frac{1}{2}}), u_{h,e}(t_{n+1})-u_{h,e}(t_n)\>  + \frac{\kp}{2} g(\widetilde{u}^{n+\frac{1}{2}},\widetilde{s}^{n+\frac{1}{2}})\nn\\
& \qquad \cdot
\<u_{h,e}(t_{n+1})-u_{h,e}(t_{n+1}), u_{h,e}(t_{n+1})-u_{h,e}(t_n)\> + \dt R_{2s}^n. \nn
\end{align}
The difference between \eqref{eq_eisav2b} and \eqref{etd2_err_pf4} leads to
\begin{align*}
e_s^{n+1} - e_s^n
& = \big\<g(\widetilde{u}_{h,e}^{n+\frac{1}{2}},\widetilde{s}_{h,e}^{n+\frac{1}{2}}) f(\widetilde{u}_{h,e}^{n+\frac{1}{2}})
- g(\widetilde{u}^{n+\frac{1}{2}},\widetilde{s}^{n+\frac{1}{2}})
f(\widetilde{u}^{n+\frac{1}{2}}), u_{h,e}(t_{n+1})-u_{h,e}(t_n)\big\> \nn \\
& \quad + \frac{\kp}{2} g(\widetilde{u}^{n+\frac{1}{2}},\widetilde{s}^{n+\frac{1}{2}}) \<u^{n+1}-\widetilde{u}^{n+1}, e_u^{n+1}-e_u^n\>\\
&\quad- g(\widetilde{u}^{n+\frac{1}{2}},\widetilde{s}^{n+\frac{1}{2}})
\<f(\widetilde{u}^{n+\frac{1}{2}}), e_u^{n+1}-e_u^n\> \nn \\
& \quad + \frac{\kp}{2} g(\widetilde{u}^{n+\frac{1}{2}},\widetilde{s}^{n+\frac{1}{2}})
\<e_u^{n+1}-\widetilde{e}_u^{n+1}, u_{h,e}(t_{n+1})-u_{h,e}(t_n)\> - \dt R_{2s}^n.
\end{align*}
Multiplying the above equation by $2e_s^{n+1}$ yields
\begin{align*}
& |e_s^{n+1}|^2 - |e_s^n|^2 + |e_s^{n+1} - e_s^n|^2 \\
& \quad = 2e_s^{n+1} \big\<g(\widetilde{u}_{h,e}^{n+\frac{1}{2}},\widetilde{s}_{h,e}^{n+\frac{1}{2}})
f(\widetilde{u}_{h,e}^{n+\frac{1}{2}})
- g(\widetilde{u}^{n+\frac{1}{2}},\widetilde{s}^{n+\frac{1}{2}})
f(\widetilde{u}^{n+\frac{1}{2}}), u_{h,e}(t_{n+1})-u_{h,e}(t_n)\big\> \nn \\
& \quad\quad + \dt\kp g(\widetilde{u}^{n+\frac{1}{2}},\widetilde{s}^{n+\frac{1}{2}})
e_s^{n+1} \<u^{n+1}-\widetilde{u}^{n+1}, \delta_te_u^{n+1}\>\nn\\
&\quad\quad- 2\dt e_s^{n+1} g(\widetilde{u}^{n+\frac{1}{2}},\widetilde{s}^{n+\frac{1}{2}})
\<f(\widetilde{u}^{n+\frac{1}{2}}), \delta_te_u^{n+1}\> \nn \\
& \quad\quad + \kp g(\widetilde{u}^{n+\frac{1}{2}},\widetilde{s}^{n+\frac{1}{2}}) e_s^{n+1}
\<e_u^{n+1}-\widetilde{e}_u^{n+1}, u_{h,e}(t_{n+1})-u_{h,e}(t_n)\> - 2\dt e_s^{n+1}R_{2s}^n. \nn
\end{align*}
Similar to the deduction from \eqref{sav1err_pf6} to \eqref{etd1_err_pf3},
we then obtain
\begin{align}
|e_s^{n+1}|^2 - |e_s^n|^2
& \le C_5 \dt (\|e_u^n\|^2 + \|e_u^{n+1}\|^2 + \|\widetilde{e}_u^{n+1}\|^2 \label{etd2_err_pf7} \\
& \quad + |e_s^n|^2 + |e_s^{n+1}|^2 + |\widetilde{e}_s^{n+1}|^2)
+ \frac{\dt}{2} \|\delta_te_u^{n+1}\|^2 + \dt |R_{2s}^n|^2 \nn
\end{align}
with $C_5$ depending on $C_*$, $|\Omega|$, $u_{h,e}$, $\kp$, and $\|f\|_{C^1[-\beta,\beta]}$.

Adding \eqref{etd2_err_pf3} and \eqref{etd2_err_pf7}, we obtain
\begin{align}
& \widehat{G}_*\kp (\|e_u^{n+1}\|^2 - \|e_u^n\|^2) + (|e_s^{n+1}|^2 - |e_s^n|^2) \label{etd2_err_pf8} \\
& \qquad \le C_6 \dt (\|e_u^n\|^2 + \|e_u^{n+1}\|^2 + \|\widetilde{e}_u^{n+1}\|^2
+ |e_s^n|^2 + |e_s^{n+1}|^2 + |\widetilde{e}_s^{n+1}|^2)\nn\\
&\qquad\quad+ 8\dt \|R_{2u}^n\|^2+ \dt |R_{2s}^n|^2, \nn
\end{align}
where $C_6>0$ depends on $C_*$, $|\Omega|$, $u_{h,e}$, and $\|f\|_{C^1[-\beta,\beta]}$.
Substituting \eqref{etd2_err_pf11} into \eqref{etd2_err_pf8} and using the estimate \eqref{etd2_trunerr},
we obtain
\begin{align}\label{ddd}
& \widehat{G}_*\kp (\|e_u^{n+1}\|^2 - \|e_u^n\|^2) + (|e_s^{n+1}|^2 - |e_s^n|^2)  \\
& \qquad \le C_6(\widetilde{C}_h+1) \dt (\|e_u^n\|^2 + \|e_u^{n+1}\|^2 + |e_s^n|^2 + |e_s^{n+1}|^2)
 + (C_6 \widetilde{C}_h + 9) C_{e,h}^2\dt^5.\nn
\end{align}
When $\dt$ is sufficiently small,
similar to the last paragraph in the proof of Theorem \ref{thm_etd1_error},
applying the discrete Gronwall's inequality to \eqref{ddd} leads to
\[
\widehat{G}_*\kp \|e_u^n\|^2 + |e_s^n|^2 \le \tilde C_{h,2}\dt^4,
\]
where $\tilde C_{h,2}>0$ is a constant independent of $\tau$,  which gives us \eqref{etd2_error} by taking $C_{h,2} = \sqrt{\tilde C_{h,2}/\min\{1,\widehat{G}_*\kp\}}$.
\end{proof}

{
\begin{remark}
In addition to the periodic or homogeneous Neumann boundary condition considered above,
one can also equip the equation \eqref{AllenCahn} with the Dirichlet boundary condition
$u(t,\bx) = \psi(t,\bx)$ for $t > 0$ and $\bx \in \partial\Omega$.
Then, it is  shown in \cite{DuJuLiQi21} that 
the solution satisfies the MBP \eqref{mbp_cont}
if $|\psi(t,\bx)|\le\beta$ for any $t>0$ and $\bx \in \partial\Omega$,
and the energy dissipation law is also valid
if $\psi(t,\bx)=\psi(\bx)$ is independent of $t$.
In particular, for the equation \eqref{AllenCahn} with
a time-independent boundary  value $\|\psi\|_{C(\partial\Omega)}\le\beta$,
we are still able to develop the GSAV-EI schemes simultaneously preserving the MBP and the energy dissipation law,
based on a slight modification of the space-discrete system  \eqref{eq_semidis}.
The main idea is to add an extra term $B_h$ to \eqref{eq_semidisa},
where $B_h$ depends only on the ratio $\eps^2/h^2$ and the boundary value $\psi$; see \cite{DuJuLiQi21} for details of the form of $B_h$.
For example, the GSAV-EI2 scheme can be established  by combining \eqref{eq_eisav2a}
with $N_\kp^{n+\frac{1}{2}}$ replaced by $N_\kp^{n+\frac{1}{2}}+B_h$ and
 \eqref{eq_eisav2b} with  $-\<B_h,u^{n+1}-u^n\>$ added to its right-hand side.
Since $B_h$ is time-independent,
the $B_h$-related terms do not affect the order of the truncation error in time.
The first-order scheme can be developed in the similar spirit.
We omit the details due to the limited space.
\end{remark}
}

\section{Numerical experiments}
\label{sect_experiment}

Let us  consider the model equation \eqref{AllenCahn} for Allen--Cahn type  gradient flows in 2D square domain $\Omega=(0,1)\times(0,1)$
equipped with periodic boundary condition {or homogeneous Neumann boundary condition. In either case}, the product of a matrix exponential with a vector
can be efficiently implemented by using the fast transform based on Lemma \ref{lem_matfun}-(iii).
We also set the interfacial parameter $\eps=0.01$.
There are two commonly-used forms of the nonlinear function $f(u)$.
One is given by the cubic function
\begin{equation}
\label{f_dw}
f(u)=-F'(u)=u-u^3,
\end{equation}
where $F(u)=\frac{1}{4}(1-u^2)^2$ is the double-well potential.
In this case, one can set $\beta=1$ and $\|f'\|_{C[-1,1]}=2$.
The other one is 
determined by the Flory--Huggins potential 
$F(u) = \frac{\theta}{2}[(1+u)\ln(1+u) + (1-u)\ln(1-u)] - \frac{\theta_c}{2}u^2$,
and
\begin{equation}
\label{f_fh}
f(u) = -F'(u) = \frac{\theta}{2}\ln\frac{1-u}{1+u} + \theta_c u
\end{equation}
where $\theta_c>\theta>0$.
In the following numerical experiments,
we set $\theta=0.8$ and $\theta_c=1.6$, then the positive root of $f(u)=0$ is $\beta\approx 0.9575$
and $\|f'\|_{C[-\beta,\beta]}\approx8.02$.
We always set $\kp=\|f'\|_{C[-\beta,\beta]}$ for both cases in the following experiments.
In addition,
we always adopt the exponential function with a constant parameter $a>0$ as
\begin{equation}
\label{test_sigma}
\sigma(x)=\e^{ax}, \quad x \in \R.
\end{equation}
Clearly, $\sigma(0) = 1$ and $\sigma'(0)=a$.
\begin{remark}
\label{rmk_sgexp}
For the double-well potential \eqref{f_dw} and the Flory--Huggins potential \eqref{f_fh},
it is easy to see that the value of the bulk energy part $E_{1h}(u^n)$ is close to $0$ due to the MBP of $\{u^n\}$.
Since $s^n$ is also an approximation of $E_{1h}(u^n)$,
only the behavior of $\sigma$ near $0$ has relatively large effect on the performance of the proposed GSAV-EI schemes.
By the Taylor expansion, these typical  elementary functions for $\sigma$ given in Remark \ref{sgchoice}
perform like a linear function near $0$ with the $y$-intercept being $1$ and the slope $a=\sigma'(0)$ if parameterized as \eqref{test_sigma}.
Thus, there is  no essential  difference on all these choices
for the above test problems.
\end{remark}

\subsection{Convergence in time}

To verify the temporal convergence rates of the GSAV-EI schemes,
let us consider the problem \eqref{AllenCahn}  with
 a smooth initial value
\[
\uinit(x,y) = 0.1\sin (2\pi x) \sin (2\pi y).
\]

By fixing the uniform spatial mesh size $h=1/2048$,
we compute the numerical solutions at $t=2$ using the GSAV-EI1 and GSAV-EI2 schemes
with various time step sizes $\dt=2^{-k}$, $k=4,5,\dots,12$.
To compute the numerical errors,
the benchmark solution is generated by using the fourth-order integrating factor Runge--Kutta (IFRK4) scheme \cite{JuLiQiYa21}
with the time step size $\dt=0.1\times2^{-12}$.
Figure \ref{fig_conv} plots the $L^2$ norms of the numerical errors versus the time step sizes, produced by
GSAV-EI1 and GSAV-EI2 with $\sigma$ given by \eqref{test_sigma}
with $a=1$, $a=10$, and $a=100$,
where the left graph shows the results for the double-well potential case \eqref{f_dw} and
the right one corresponds to the Flory--Huggins potential case \eqref{f_fh}.
The expected convergence rates in time, first order for GSAV-EI1 and second order for GSAV-EI2,
are clearly observed for all cases.
In addition, we find that  the larger $a$ leads to smaller numerical errors for the GSAV-EI2 scheme, but such effect is not
 obvious for the GSAV-EI1 scheme.

We also repeat all the above convergence tests on the spatial mesh with $h=1/512$
and find the results are almost identical to those with $h=1/2048$ shown in Figure  \ref{fig_conv}.
This suggests that the temporal convergence constants in \eqref{etd1_error} and \eqref{etd2_error} could be
independent of the spatial mesh size $h$,
although we are not able to remove their dependence on $h$ in the theoretical analysis.

\begin{figure}[!ht]
\centerline{
\includegraphics[width=0.48\textwidth]{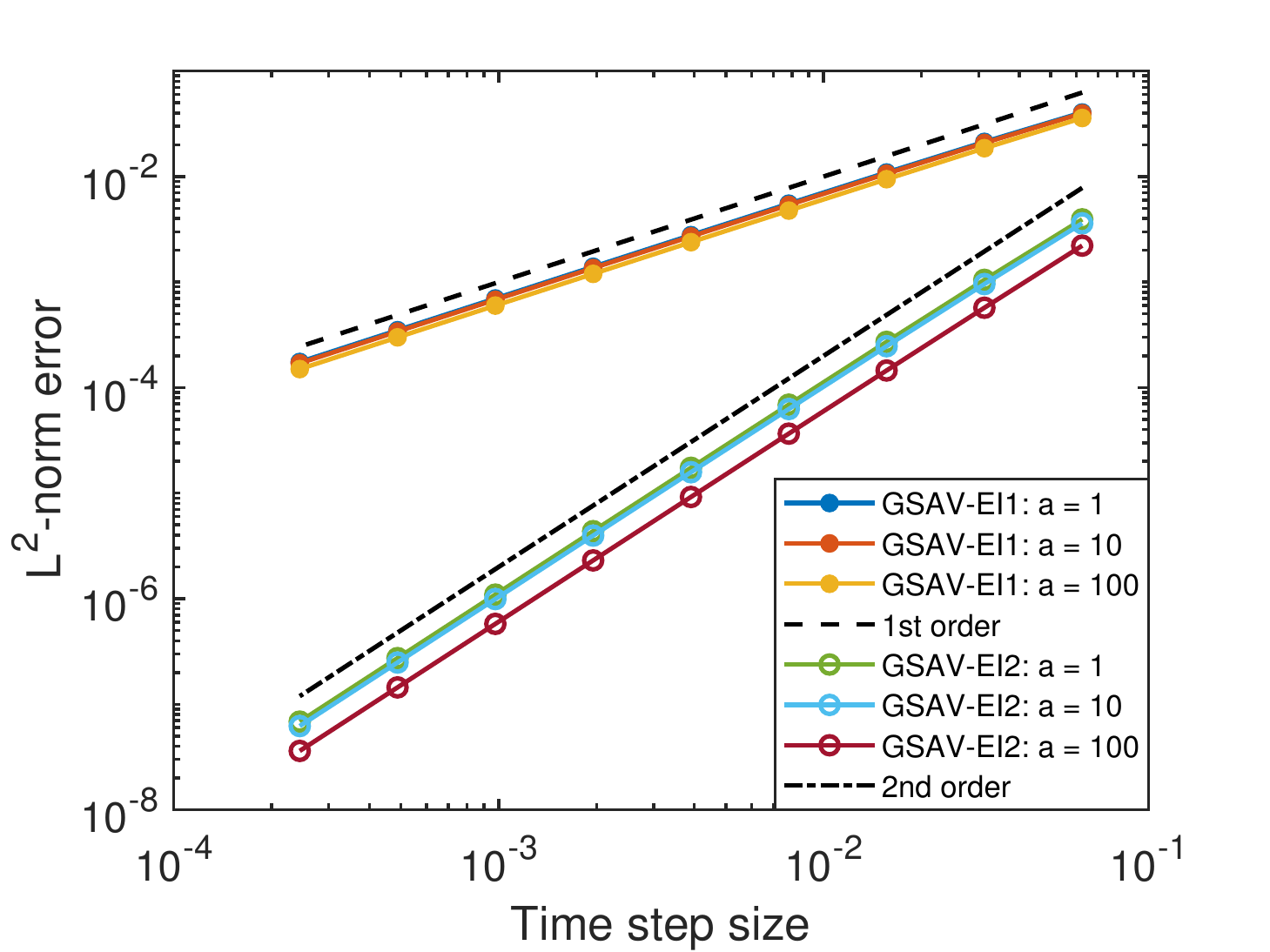}
\includegraphics[width=0.48\textwidth]{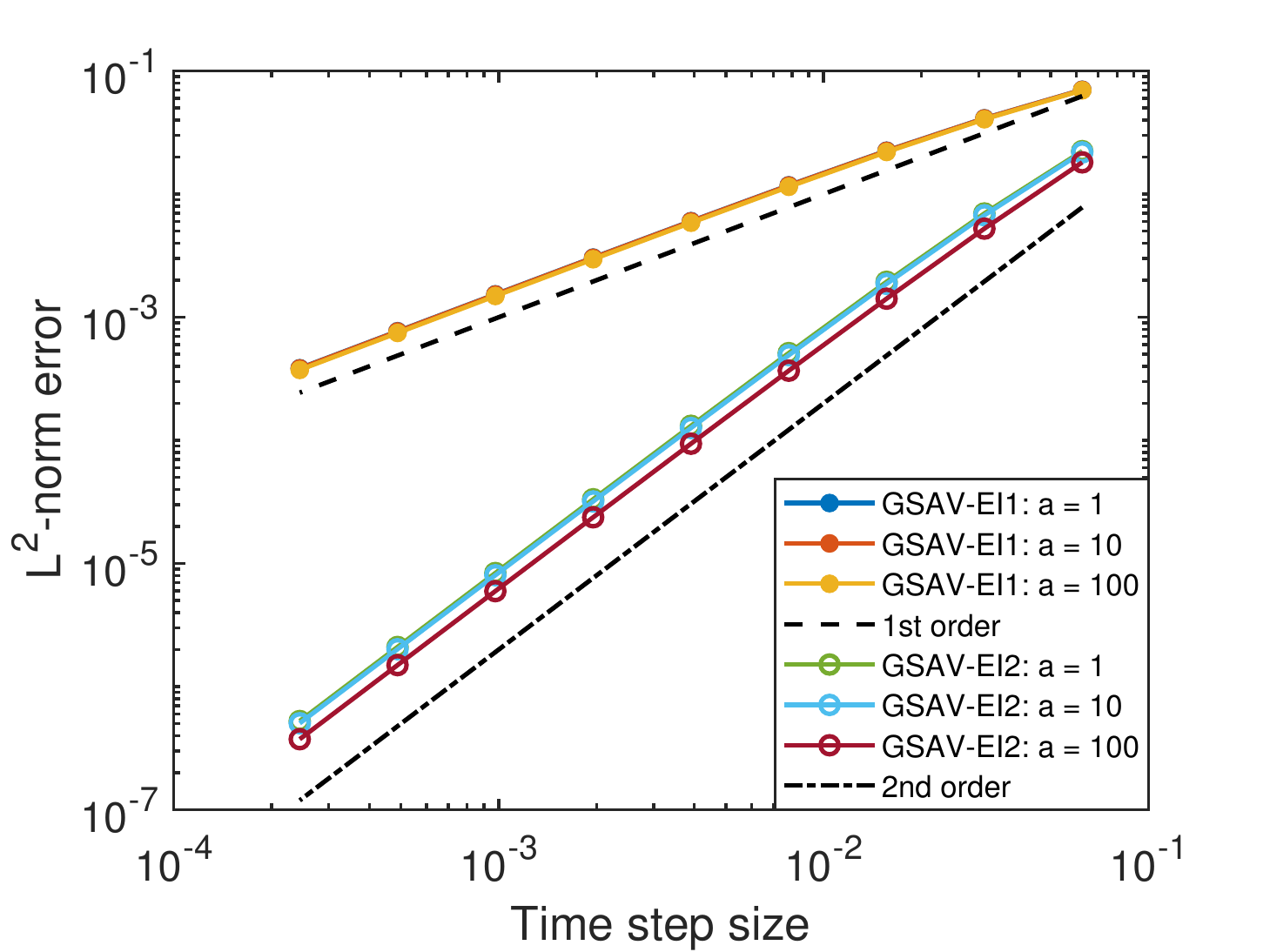}}\vspace{-0.2cm}
\caption{The $L^2$-norm errors vs. the time step sizes produced by the GSAV-EI1 and GSAV-EI2 schemes with the spatial mesh of $h=1/2048$ for the equation \eqref{AllenCahn}.
Left: the double-well potential \eqref{f_dw}; right: the Flory--Huggins potential \eqref{f_fh}.}
\label{fig_conv}
\end{figure}

\subsection{Unconditional preservation of MBP and energy dissipation law}

We numerically verify the MBP and the energy dissipation law of the proposed GSAV-EI1 and GSAV-EI2 schemes
by simulating the phase transition process beginning with a random state.
Though the discrete energy dissipation law is proved with respect to the slightly modified energy \eqref{modified_energy},
we are more concerned about the original energy defined by \eqref{dis_energy}
since it reflects the real physical mechanism of the dynamic process.
We consider the equation \eqref{AllenCahn}
on the uniform spatial mesh with $h=1/512$. Different from  the previous convergence  tests,
the initial state is generated by random numbers ranging from $-0.8$ to $0.8$ on each mesh point, thus it has
highly oscillated values.

We compute the numerical solutions by the GSAV-EI1 and GSAV-EI2 schemes
with $\dt = 0.01$ and various values of $a$ ($a=1$, $5$, $10$, respectively),
and treat the results obtained by the IFRK4 scheme with the time step size $\dt=10^{-4}$ as the benchmark.
First, we adopt the double-well potential \eqref{f_dw},
and the evolutions of the supremum norms and the energies of the numerical solutions
are shown in Figure \ref{fig_polytest}.
Obviously, the MBP and the energy dissipation law are preserved perfectly.
In addition, we observe that the smaller $a$ produces slightly  more accurate numerical solutions in this case.
This behavior is opposite to that  with smooth initial value
shown in the convergence tests. Then, we consider the Flory--Huggins potential \eqref{f_fh}
and Figure \ref{fig_logtest} presents the evolutions of the supremum norms and the energies of the numerical solutions.
Similar to the double-well potential case, the preservation of
the MBP and the energy dissipation law are obvious,
and the smaller value of $a$ in \eqref{test_sigma} yields  slightly  more accurate numerical solution.

\begin{figure}[!ht]
\centerline{
\includegraphics[width=0.43\textwidth]{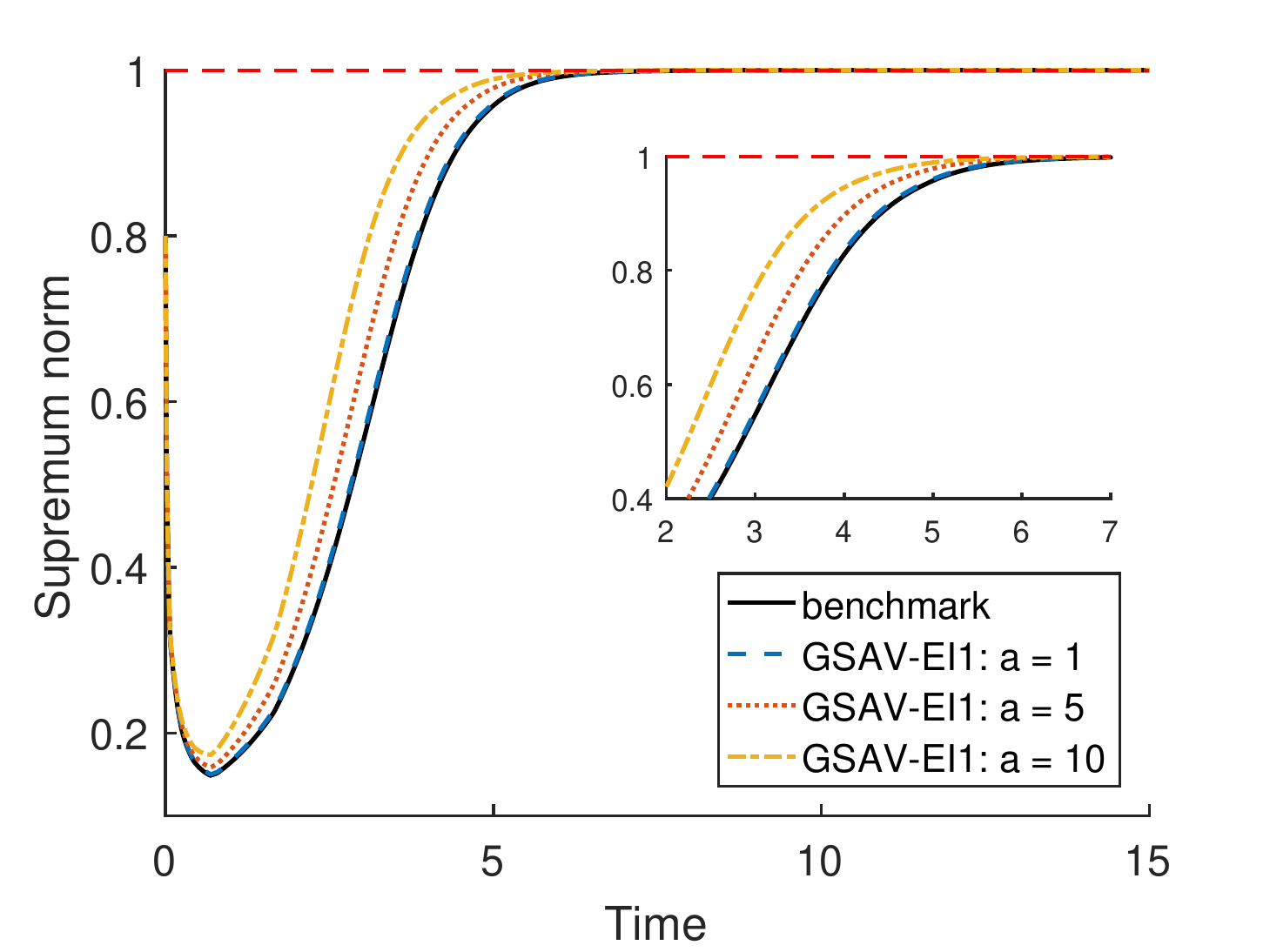}
\includegraphics[width=0.43\textwidth]{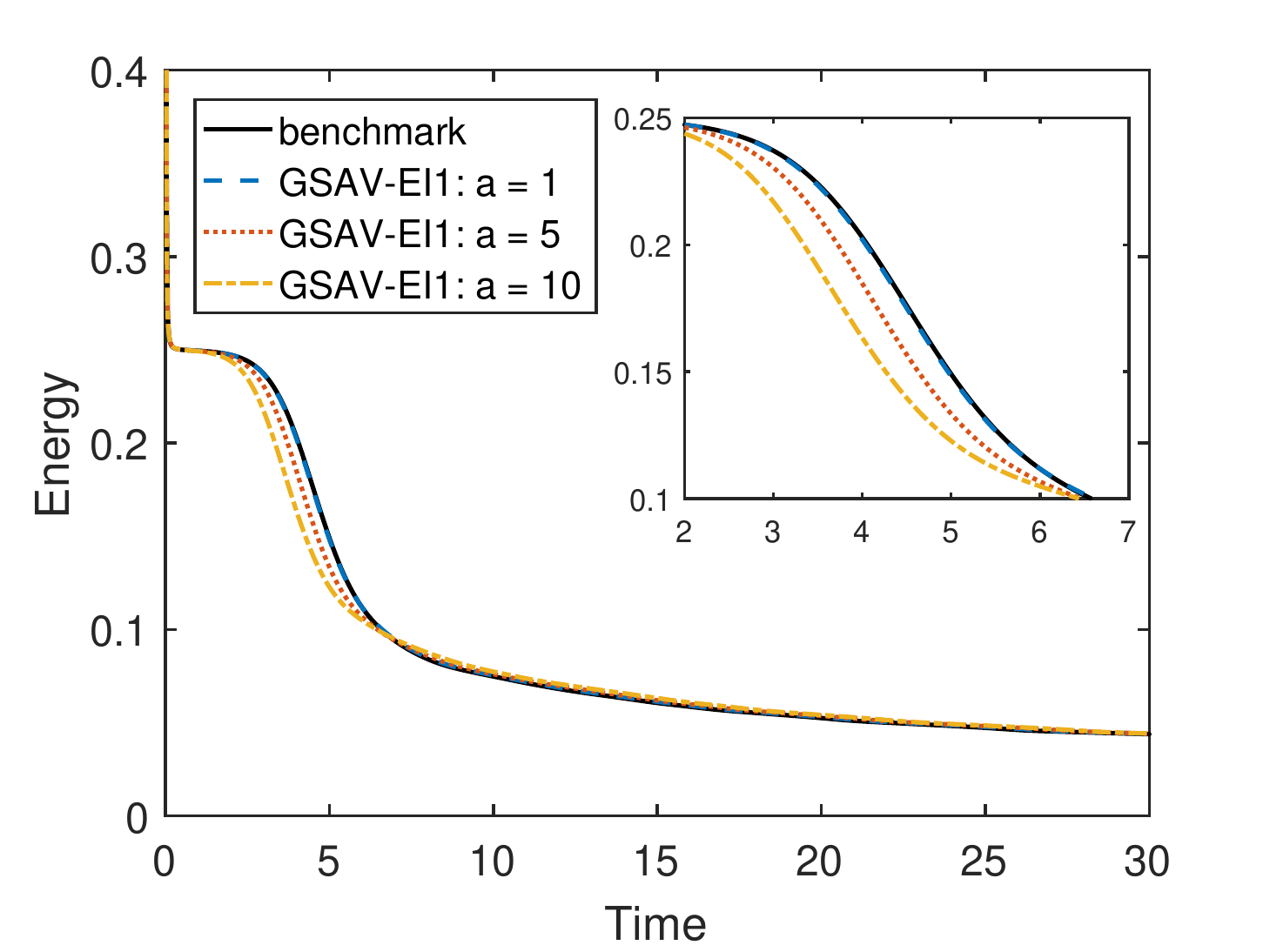}}
\centerline{
\includegraphics[width=0.43\textwidth]{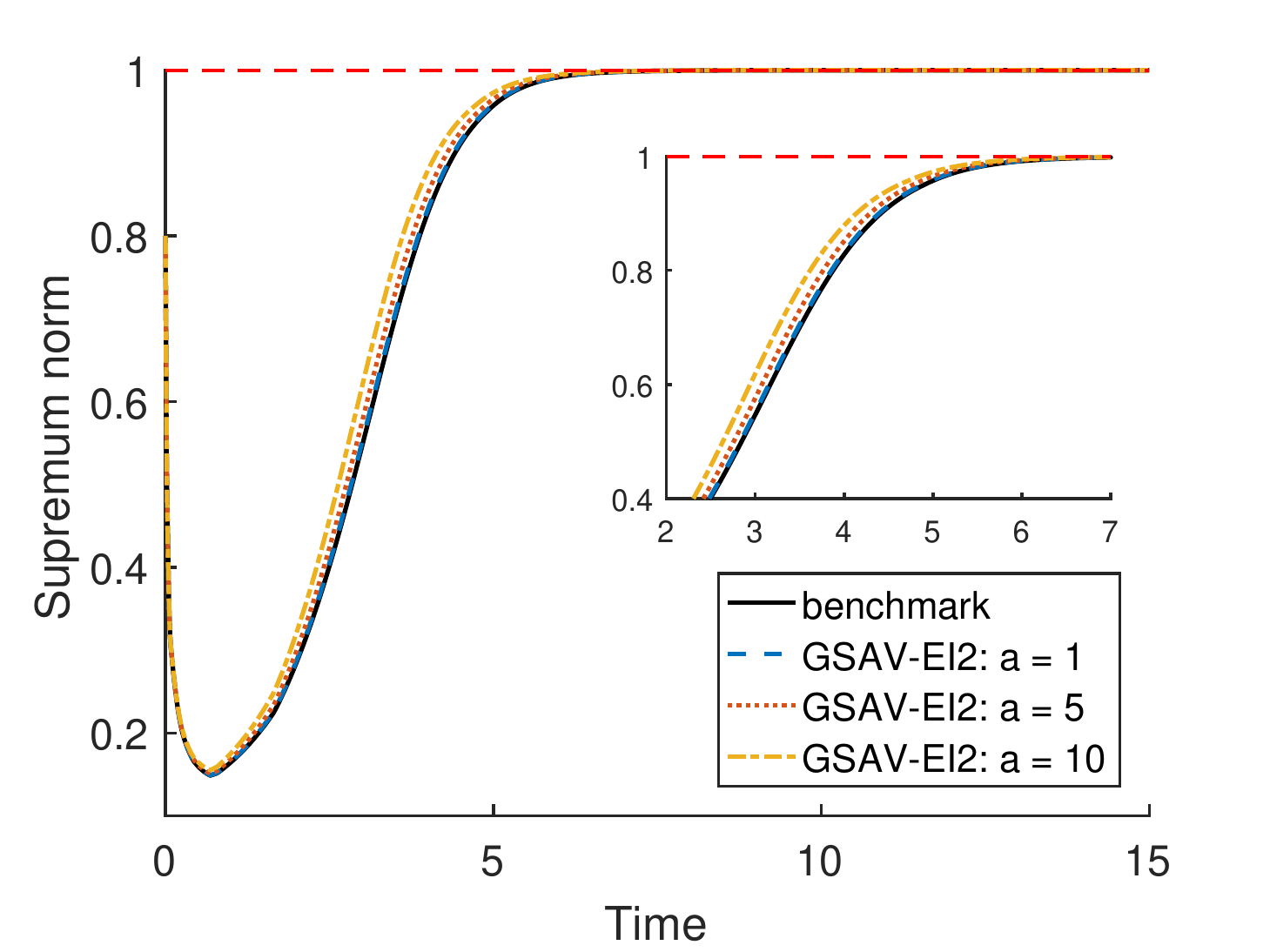}
\includegraphics[width=0.43\textwidth]{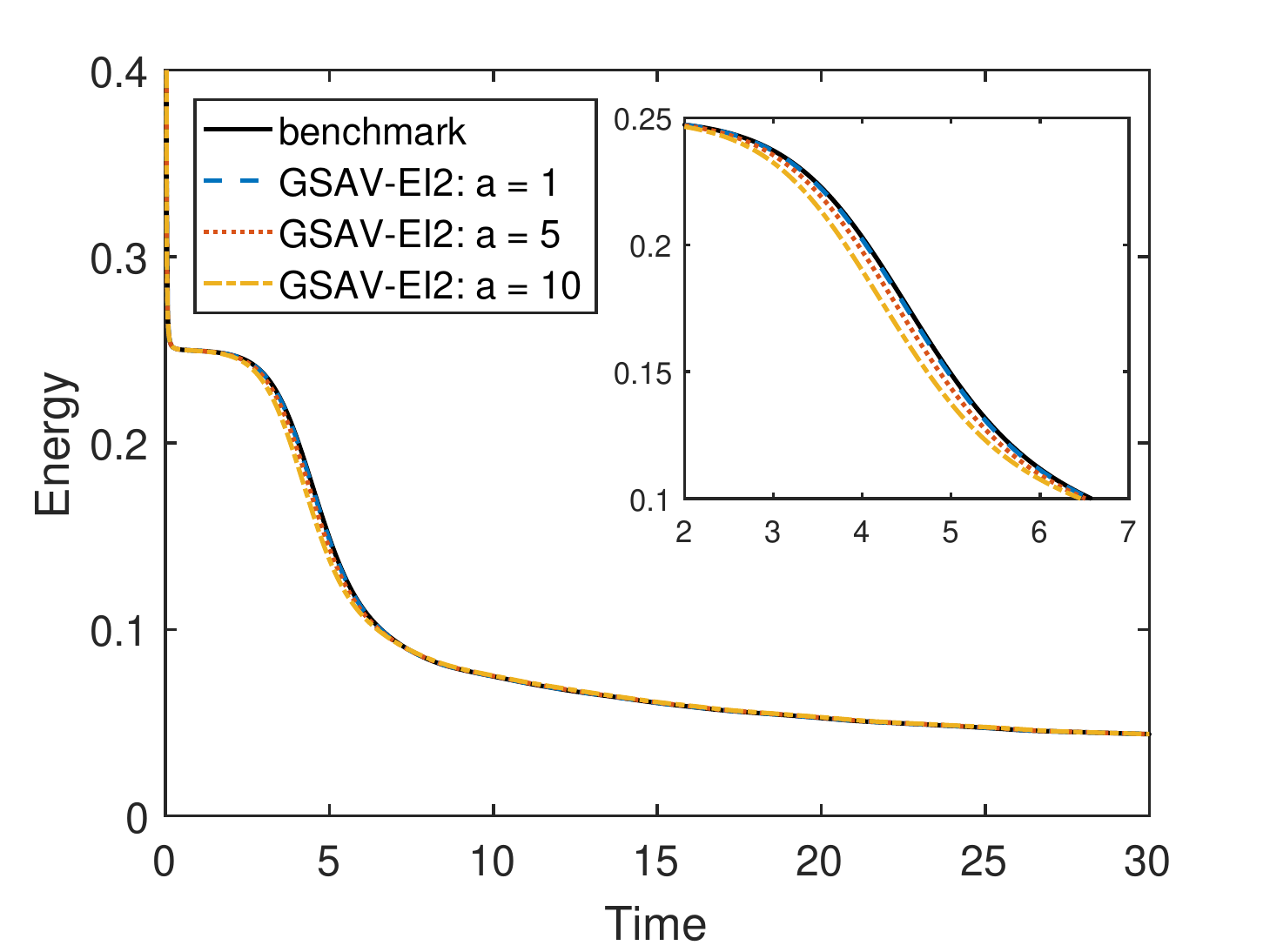}}\vspace{-0.2cm}
\caption{Evolutions of the supremum norms and the energies of simulated solutions computed by the GSAV-EI1 (top row) and
GSAV-EI2 (bottom row) schemes with $\dt = 0.01$ for the equation \eqref{AllenCahn}
with the double-well potential \eqref{f_dw}.}
\label{fig_polytest}
\end{figure}

\begin{figure}[!ht]
\centerline{
\includegraphics[width=0.43\textwidth]{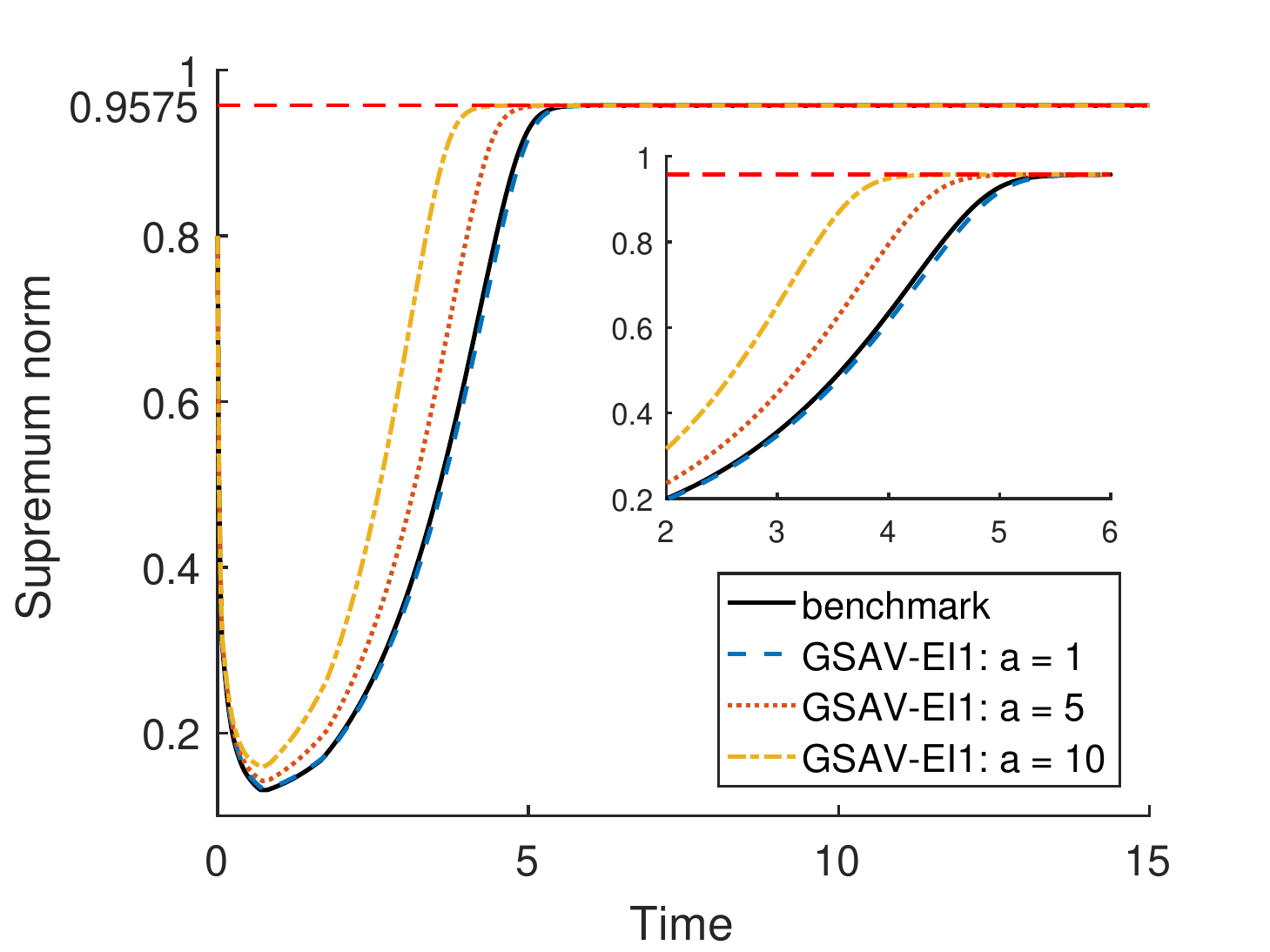}
\includegraphics[width=0.43\textwidth]{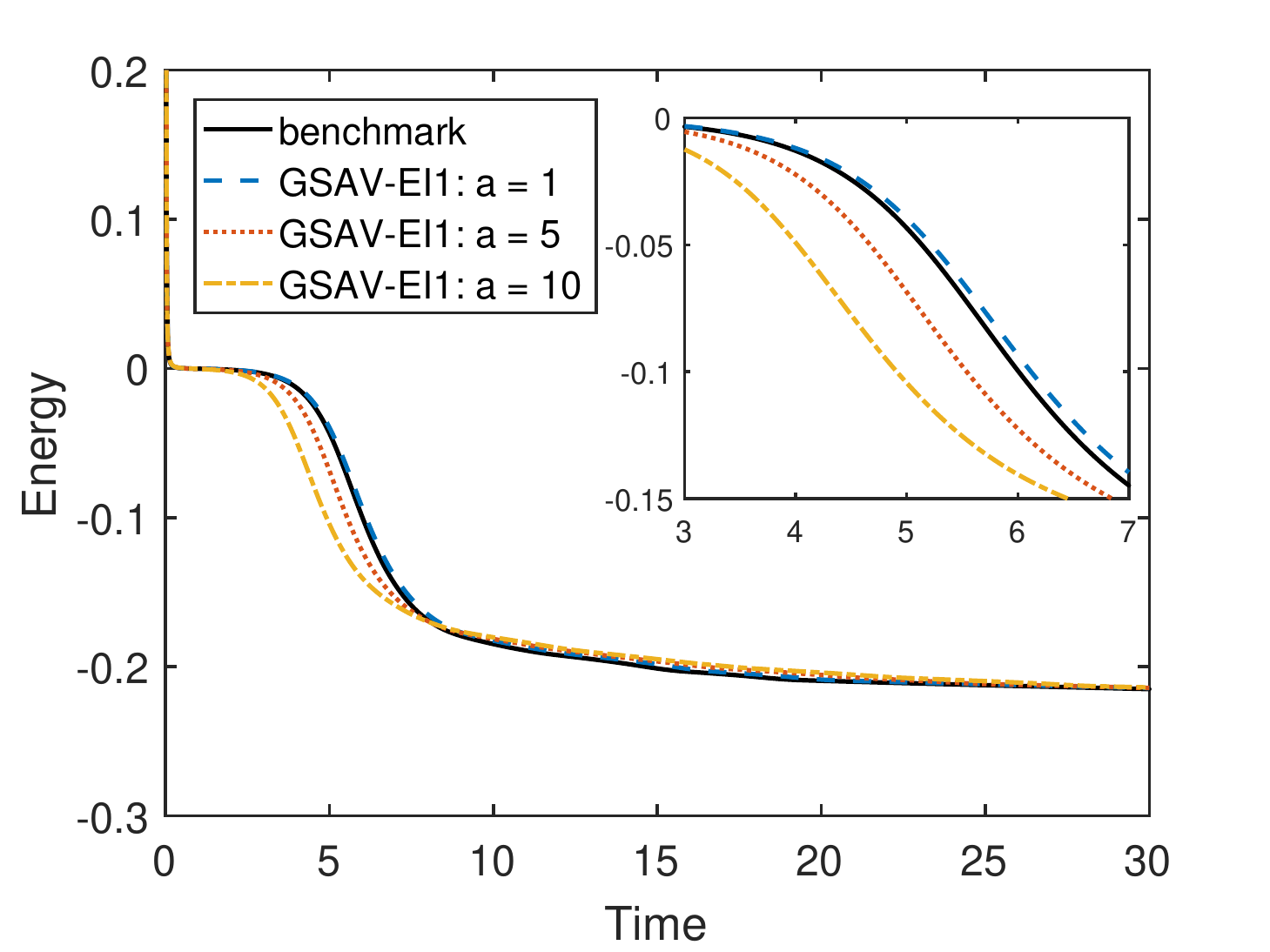}}
\centerline{
\includegraphics[width=0.43\textwidth]{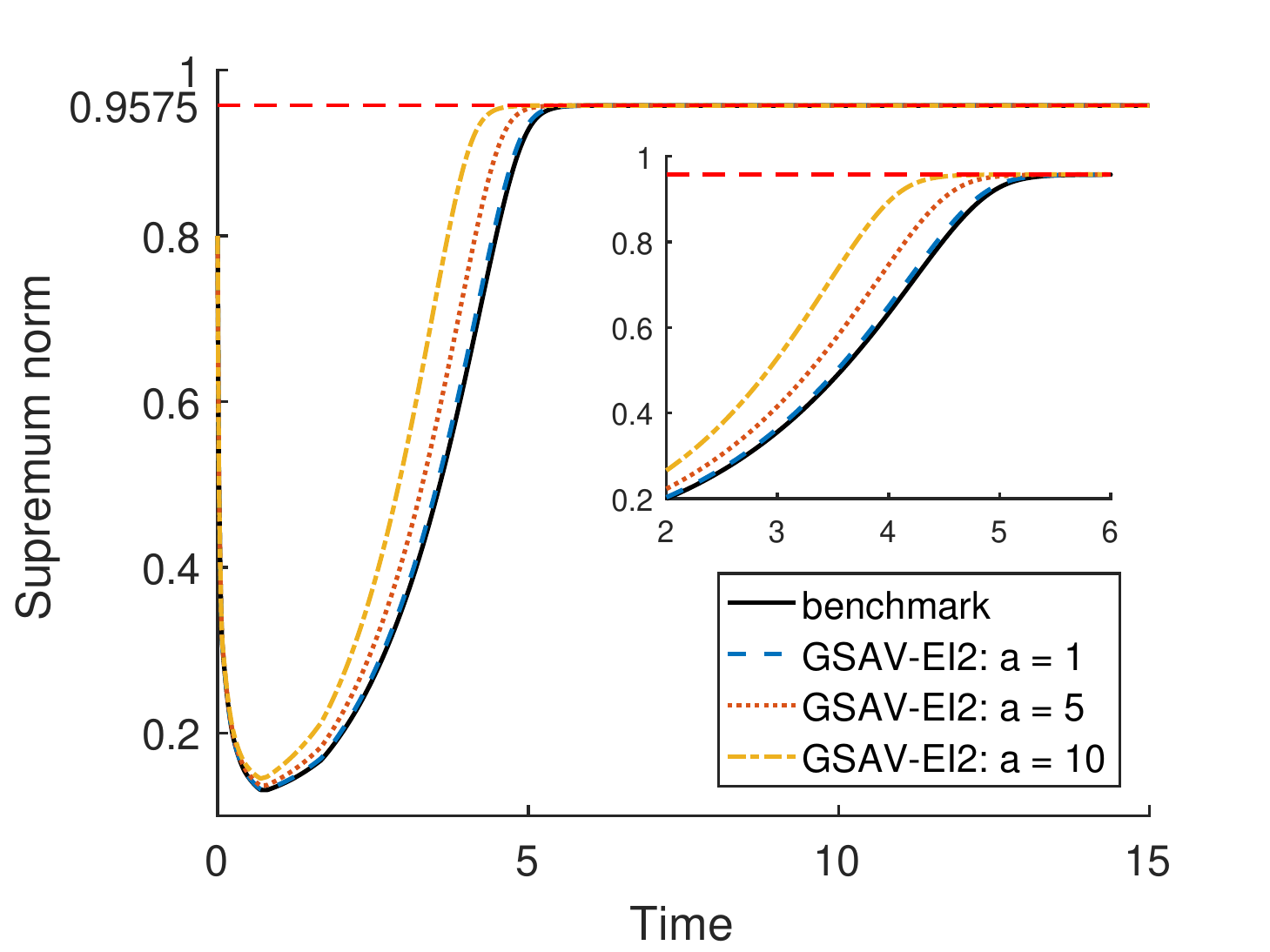}
\includegraphics[width=0.43\textwidth]{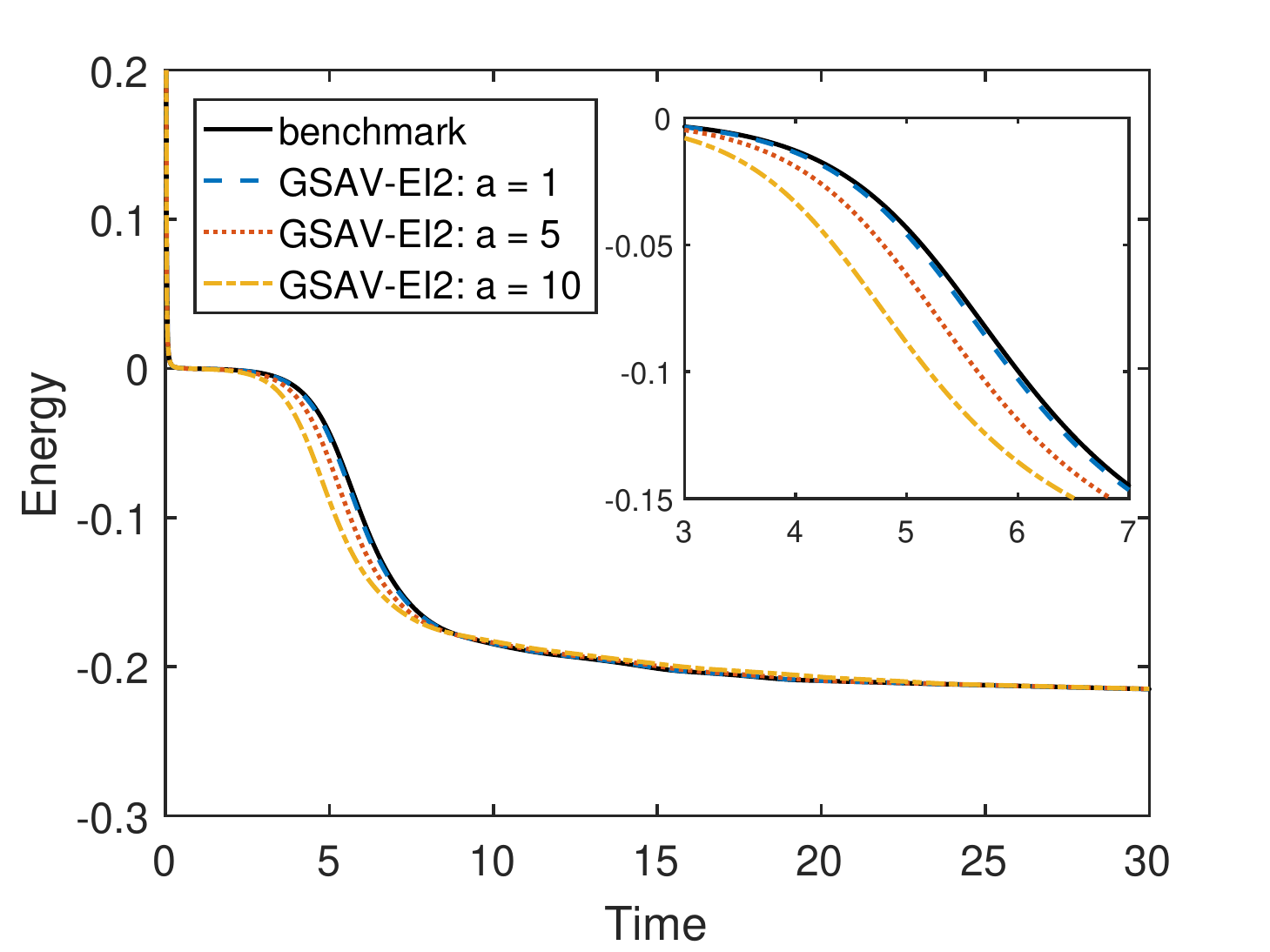}}\vspace{-0.2cm}
\caption{Evolutions of the supremum norms and the energies of simulated solutions computed by the GSAV-EI1 (top row) and
GSAV-EI2 (bottom row) schemes with $\dt = 0.01$ for the equation \eqref{AllenCahn}
with the  Flory--Huggins potential \eqref{f_fh}.}
\label{fig_logtest}
\end{figure}

Next, we repeat the above experiments  by choosing the ($10$ times) larger time step size $\dt=0.1$.
{We can observe the similar results that the MBP and the energy dissipation law are still preserved well
although the large time step size leads to a little less accurate numerical solutions.}

{
\subsection{Adaptive time-stepping and long-time simulation}
\label{sect_neumann}

Since the proposed two GSAV-EI schemes \eqref{eq_eisav1} and \eqref{eq_eisav2} are both one-step approaches,
without sacrificing the energy dissipation law and the MBP,
they can also be applied on
a set of nonuniform temporal nodes $\{t_n\}_{n\ge0}$ with $t_0=0$ and $t_{n+1}=t_n+\dt_{n+1}$,
where the time step size $\dt_{n+1}$ varies in $n$.
Let us consider \eqref{AllenCahn} with $\eps=0.01$ and the Flory--Huggins potential \eqref{f_fh} again but with the homogeneous Neumann boundary condition.
The spatial mesh and the random initial value are the same as aforementioned.
We adopt the GSAV-EI2 scheme \eqref{eq_eisav2} with $\sigma(x)=\e^x$ and variable time step sizes
$\dt_{n+1}$ updated by using the approach from \cite{QiaoZhTa11}
\[
\dt_{n+1} = \max\Big\{ \dt_{\min}, \frac{\dt_{\max}}{\sqrt{1+\alpha|\d_tE_h(u^n)|^2}} \Big\},
\]
where $\d_tE_h(u^n)=(E_h(u^n)-E_h(u^{n-1}))/\dt_n$ and $\alpha>0$ is a constant parameter.
Here, we choose the minimal and maximal time step sizes as $\dt_{\min}=0.0001$ and $\dt_{\max}=0.1$ respectively, and set $\alpha=10^5$ as done in \cite{QiaoZhTa11}.
For comparison, we also conduct the simulation by the GSAV-EI2 scheme  with the uniform time step size $\dt=0.01$.

The coarsening dynamics reach the steady state at around $t=3000$. We find that
the CPU time for the whole simulation with adaptive time-stepping is only about $10\%$ of that with uniform time step size.
One can observe from the left and middle graphs in Figure \ref{fig_neu_step} that
the energy dissipation law and the MBP are preserved perfectly.
The right graph in Figure \ref{fig_neu_step} plots the evolution of the adaptive time step sizes.
In the time interval $[0,20]$, the time step size varies significantly and sometimes are very small since the energy decreases rapidly at most of the time.
Then after $t=20$, the energy changes more and more slowly and the time step size is magnified gradually.
When $t>200$, the time step size remains around $0.1$ (not shown in the graph),
and we find that, although the large step size is used for this period,
the relative error of the energy is only about $1\%$ in comparison with the case of uniform time step size.
These results show that the adaptive time-stepping strategy can  greatly help accelerate the computation without sacrificing the desired properties and the accuracy.

\begin{figure}[!ht]
\centerline{
\includegraphics[width=0.36\textwidth]{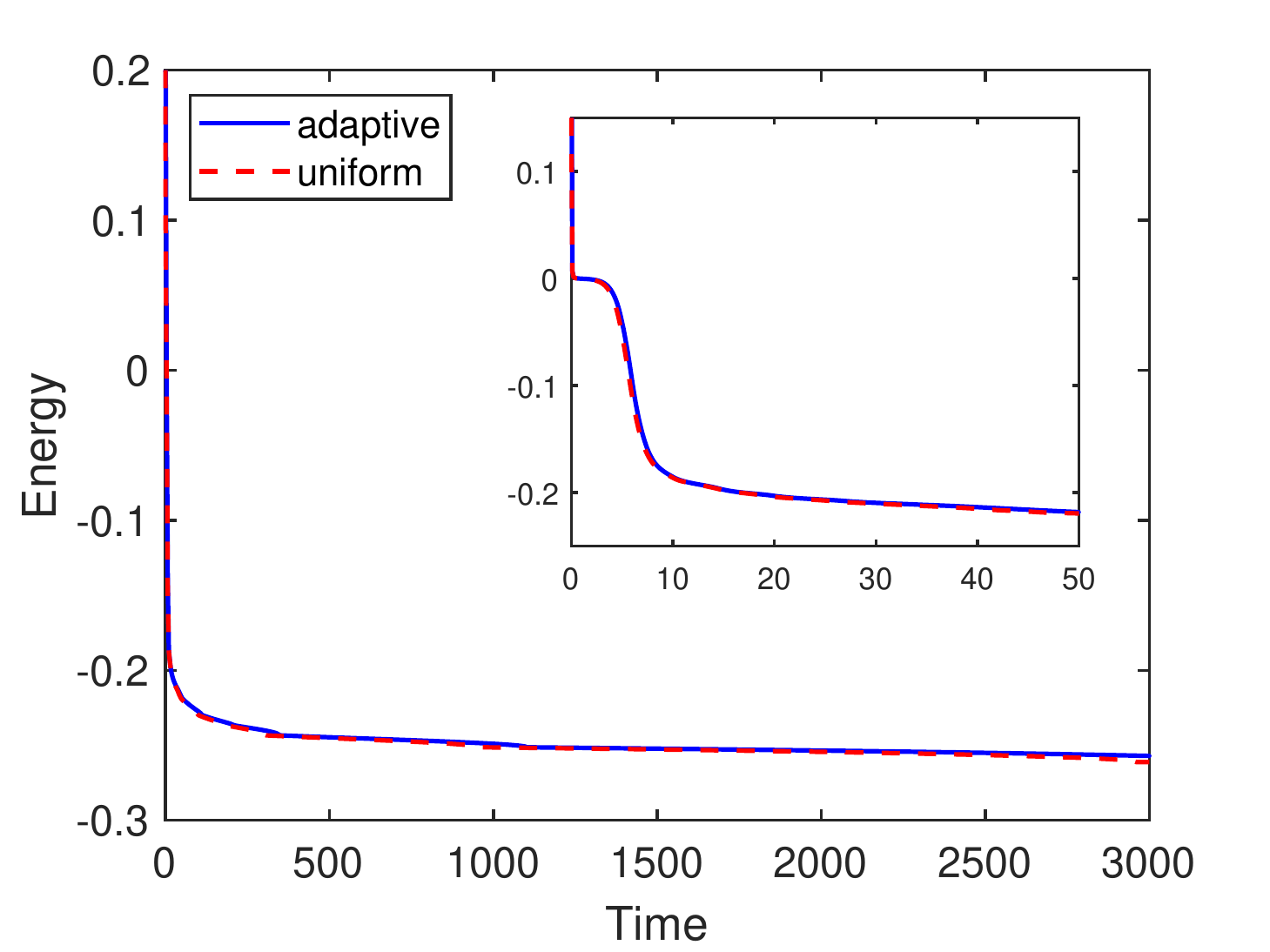} \hspace{-0.5cm}
\includegraphics[width=0.36\textwidth]{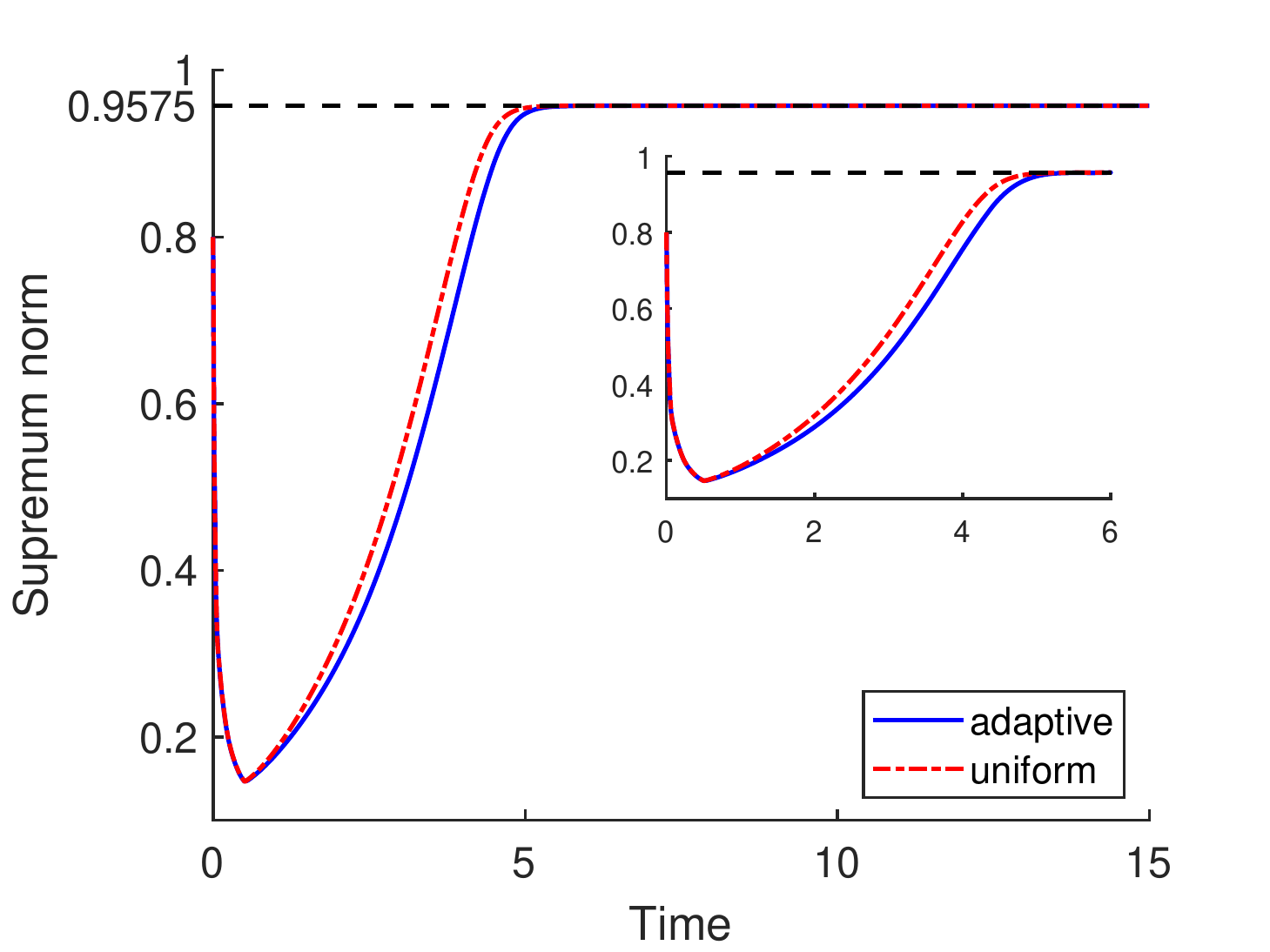} \hspace{-0.5cm}
\includegraphics[width=0.36\textwidth]{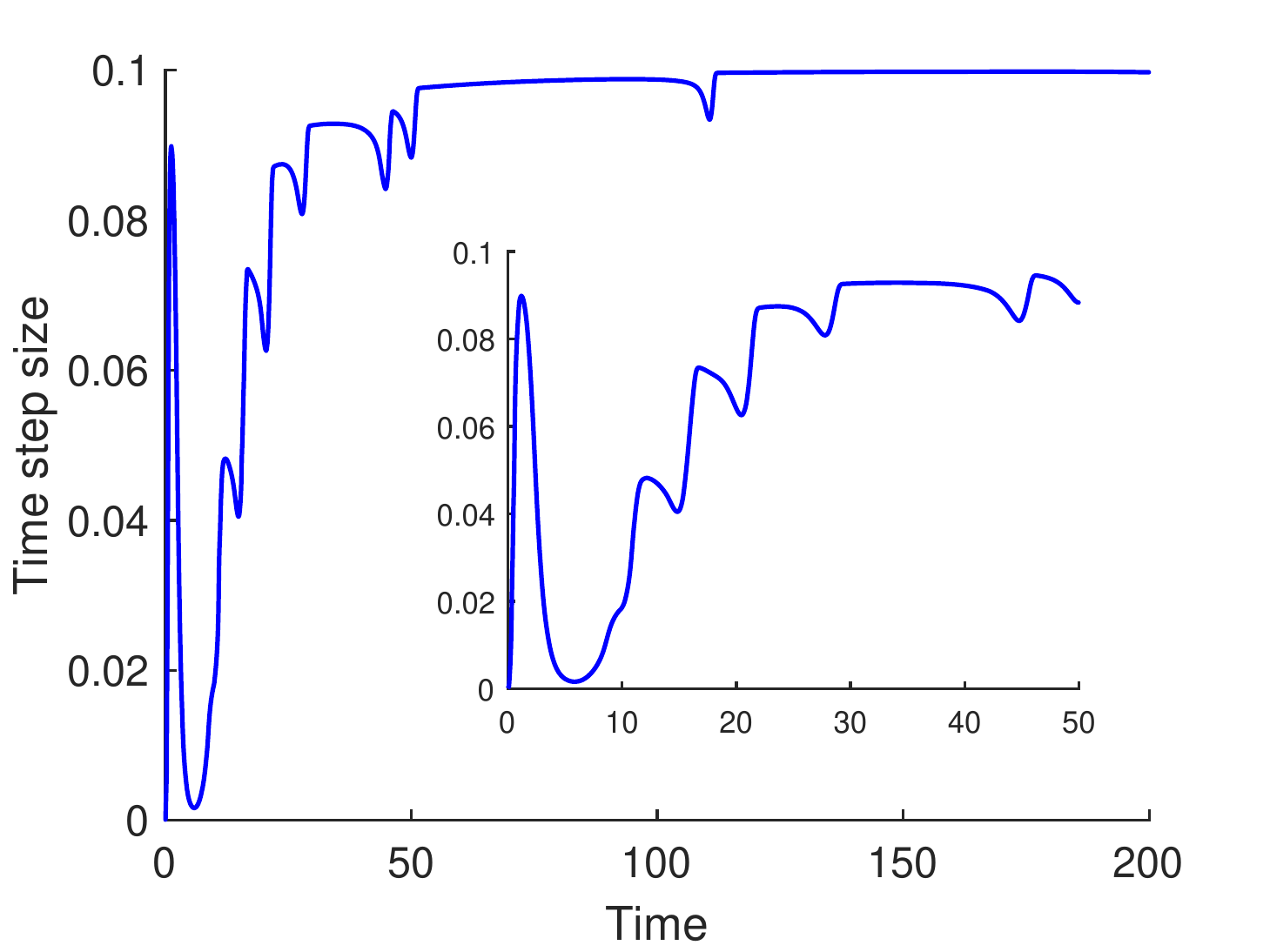}}\vspace{-0.1cm}
\caption{Evolutions of the energies (left), the supremum norms (middle), and the time step sizes (right) of simulated solutions computed by the GSAV-EI2 schemes for \eqref{AllenCahn} with homogeneous Neumann boundary condition and the Flory--Huggins potential \eqref{f_fh}.}
\label{fig_neu_step}
\end{figure}

\begin{figure}[!ht]
\centerline{
\includegraphics[height=0.135\textheight]{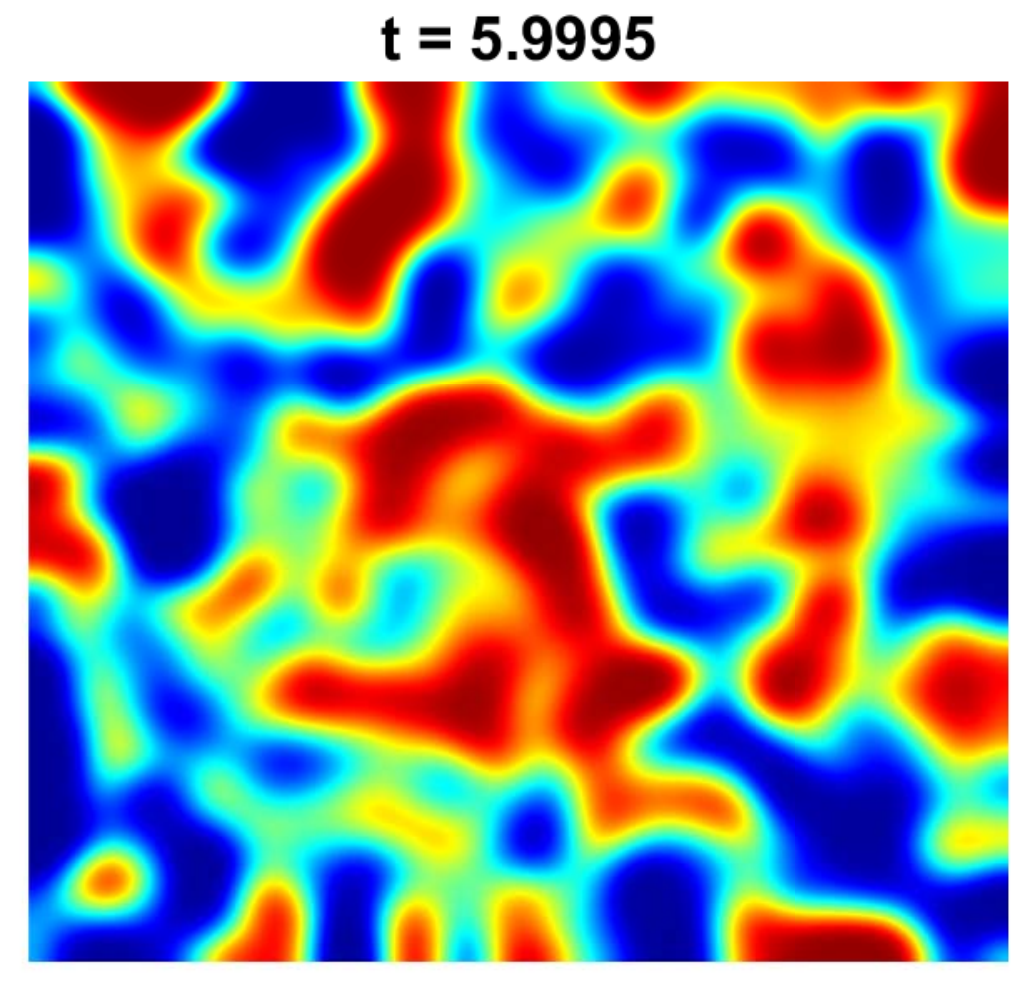}\hspace{-0.1cm}
\includegraphics[height=0.135\textheight]{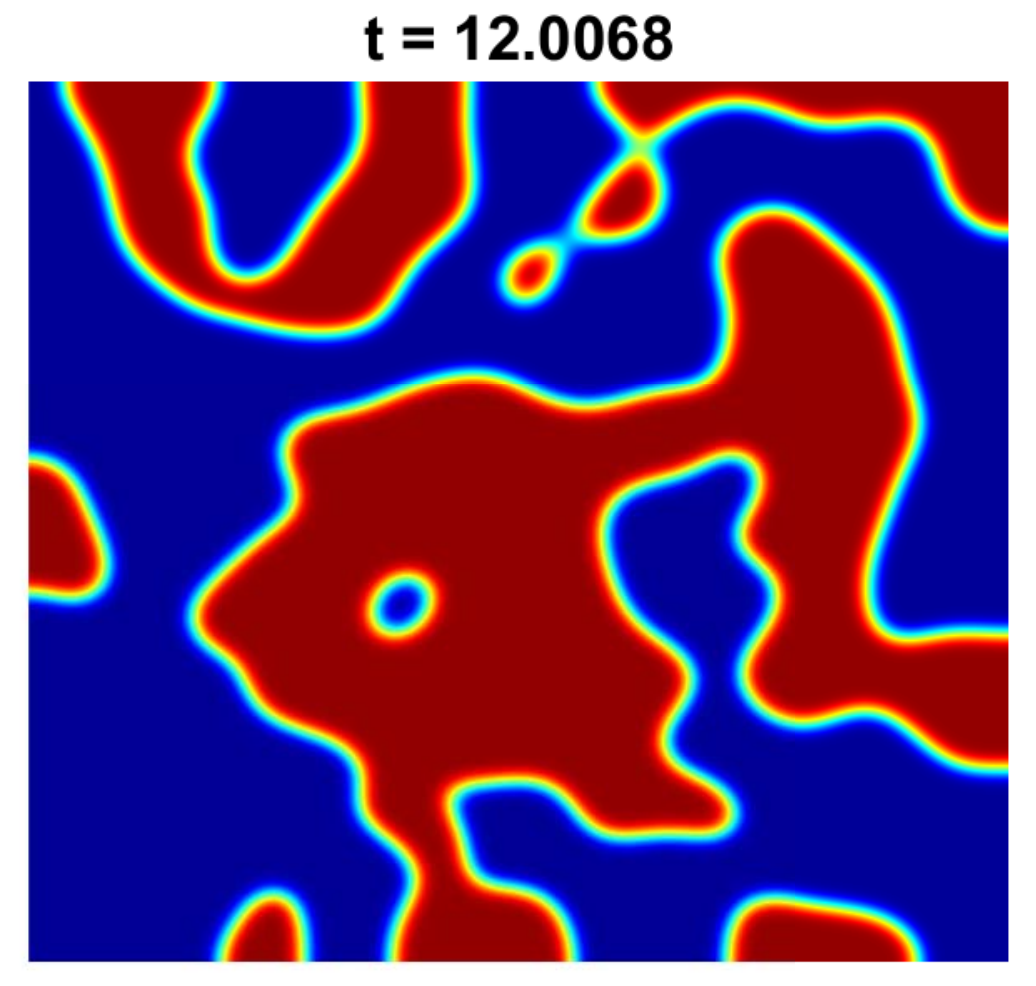}\hspace{-0.1cm}
\includegraphics[height=0.135\textheight]{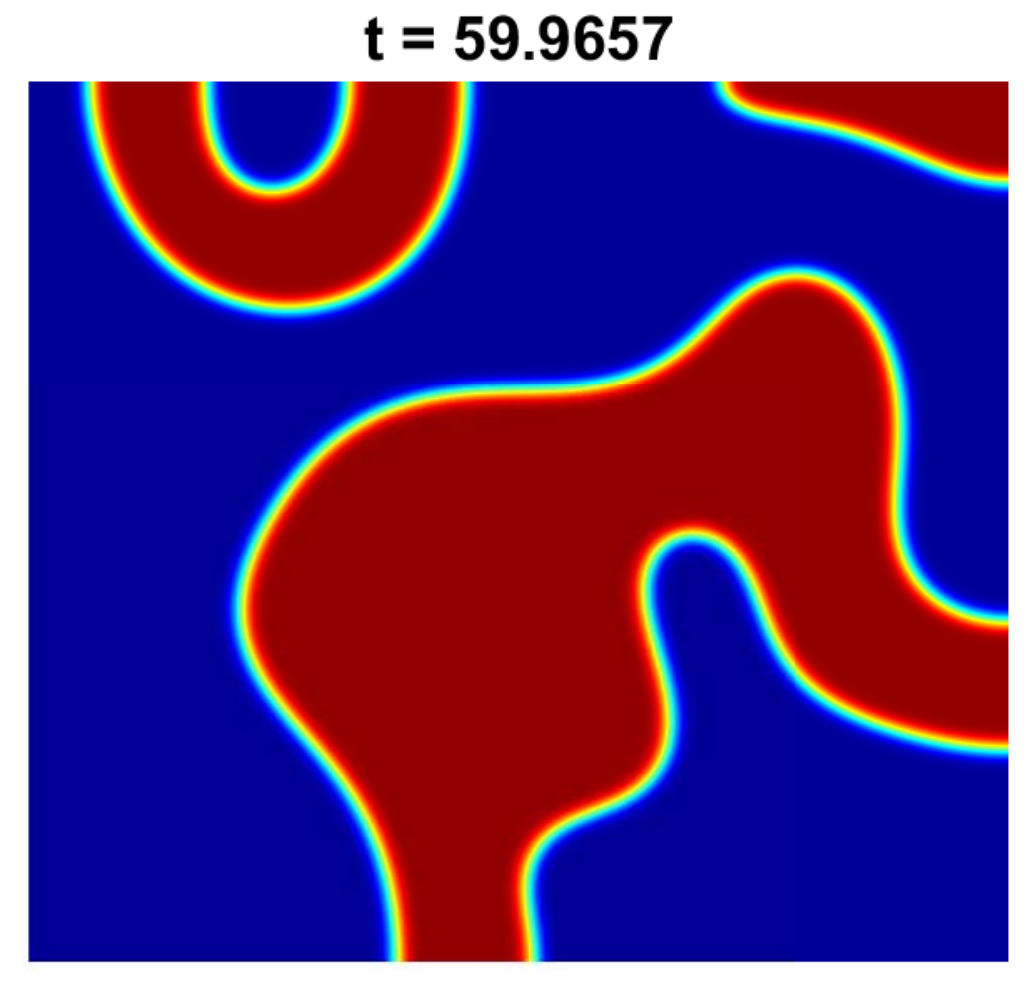}\hspace{-0.1cm}
\includegraphics[height=0.135\textheight]{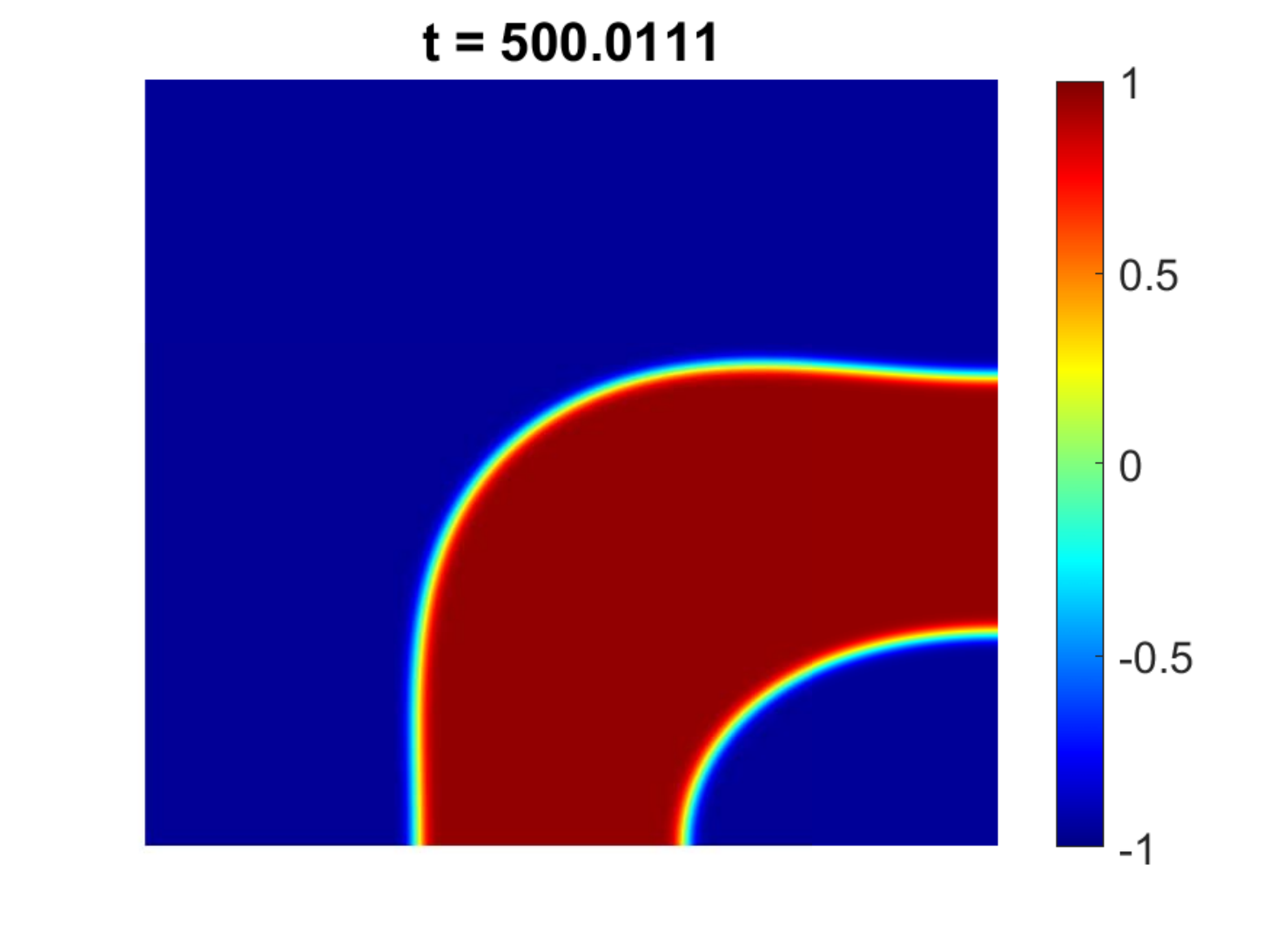}}
\centerline{
\includegraphics[height=0.135\textheight]{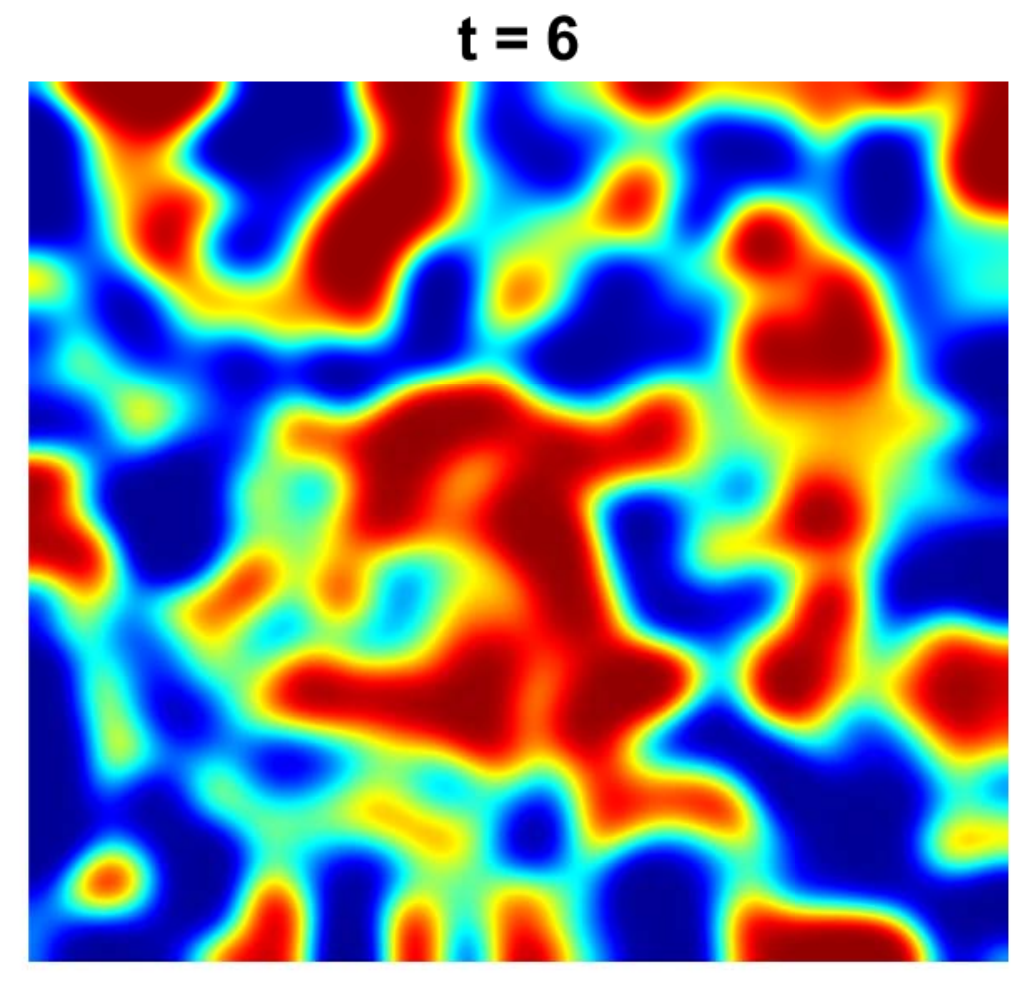}\hspace{-0.1cm}
\includegraphics[height=0.135\textheight]{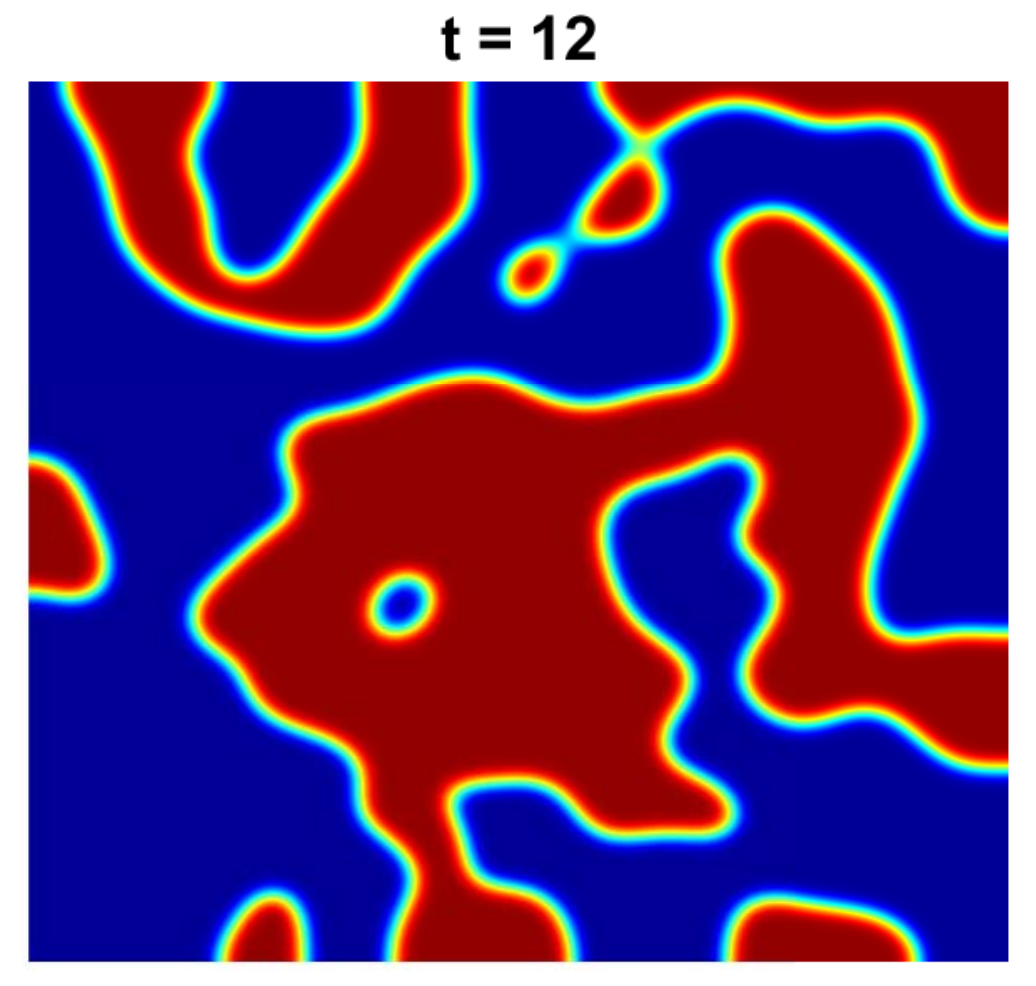}\hspace{-0.1cm}
\includegraphics[height=0.135\textheight]{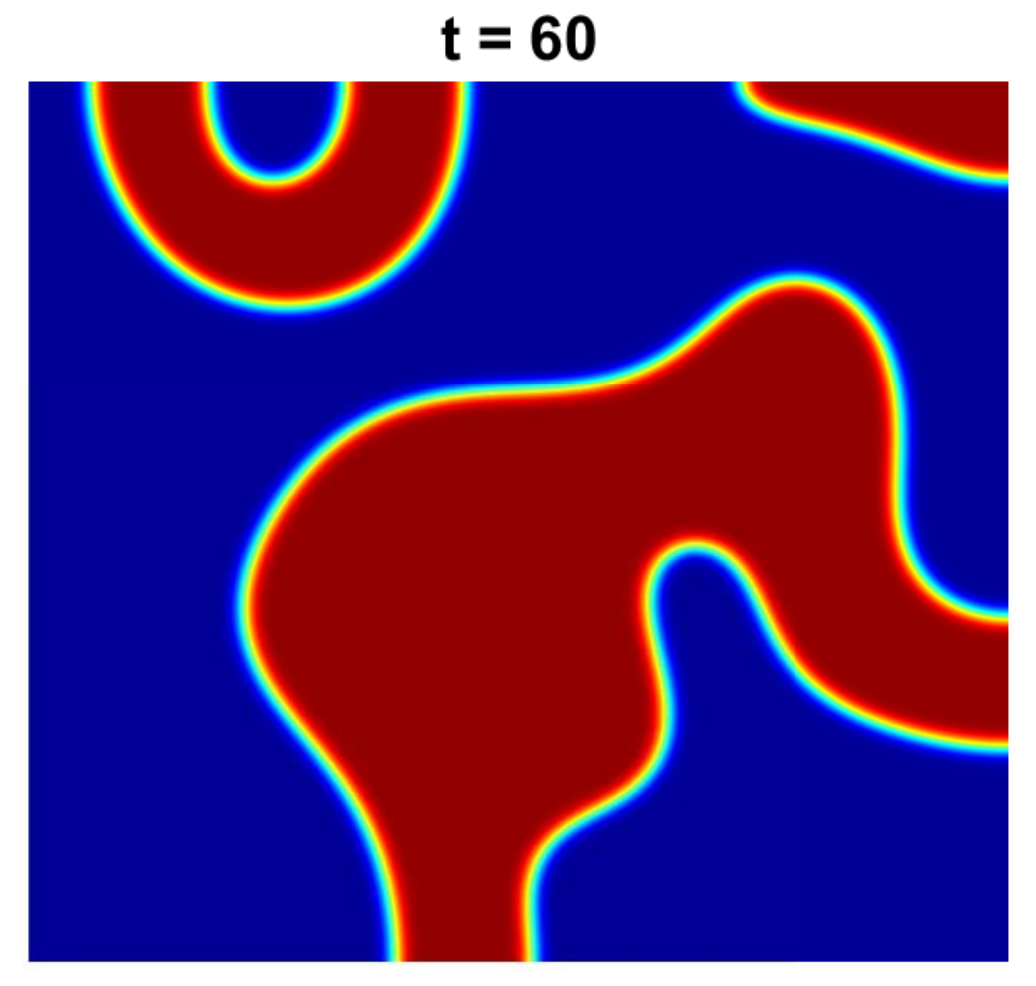}\hspace{-0.1cm}
\includegraphics[height=0.135\textheight]{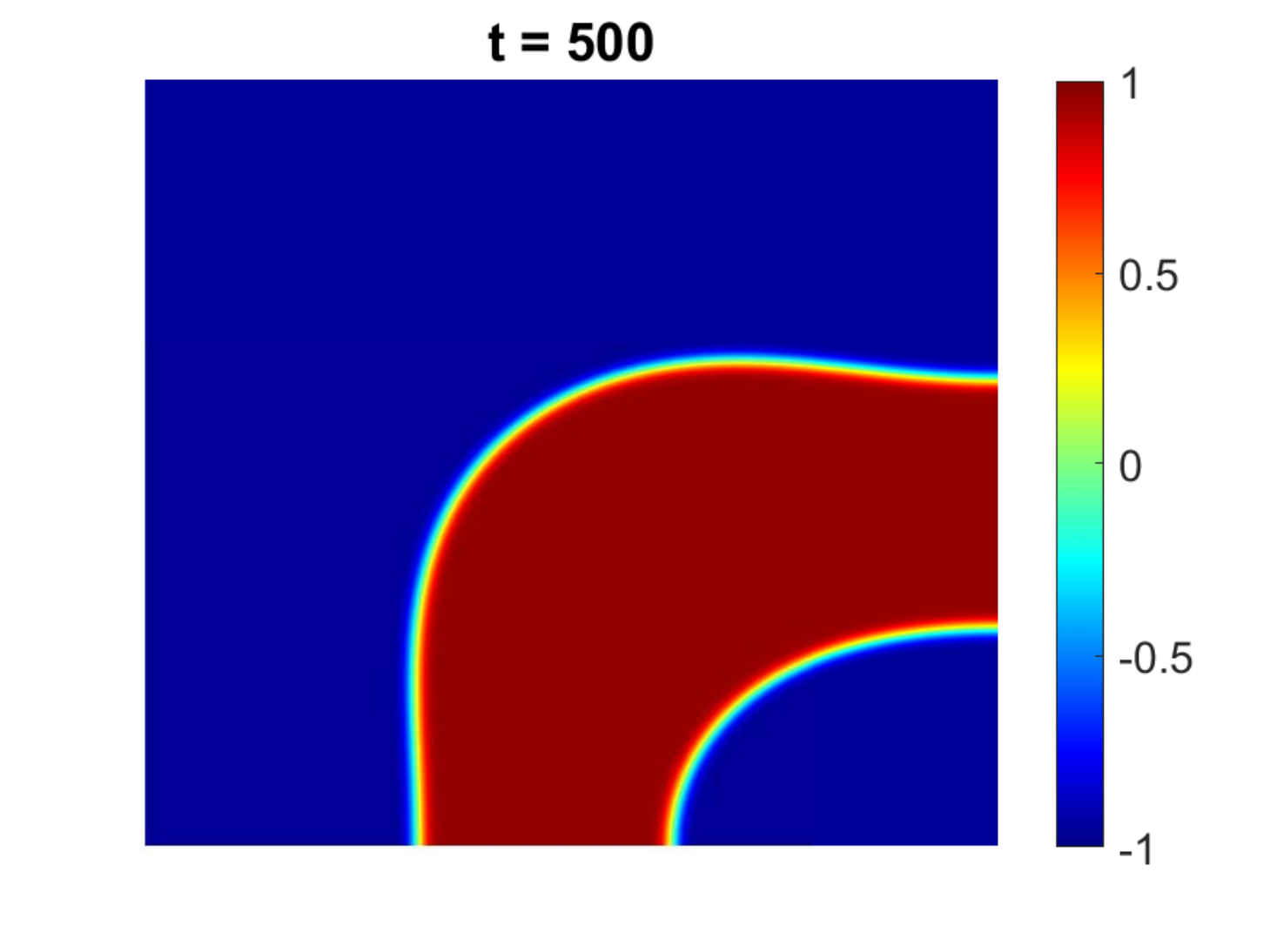}}\vspace{-0.1cm}
\caption{Snapshots of the phase transition generated by the GSAV-EI2 schemes with adaptive time step (top) and uniform time step (bottom) for \eqref{AllenCahn} with homogeneous Neumann boundary condition and the Flory--Huggins potential \eqref{f_fh}.}
\label{fig_neu_snapshot}
\end{figure}
}

\section{Concluding remarks}
\label{sect_conclusion}

In this paper, we study the numerical schemes preserving both the energy dissipation law and the MBP unconditionally
for a class of Allen--Cahn type gradient flows
by combining the exponential integrator method and the generalized SAV approach.
With the appropriate stabilization terms,
we develop first- and second-order GSAV-EI schemes
and prove their unconditional preservation of the energy dissipation law and the MBP in the time discrete sense,
as well as their optimal temporal  error estimates under fixed spatial mesh.
Different from most existing numerical schemes,
the energy dissipation law and the MBP of the proposed GSAV-EI schemes can be established in parallel,
which provides more flexibility to apply the proposed schemes to other types of gradient flow equations
to preserve some important physical properties. We  also note that the fully-discrete error estimate for the case that
the spatial mesh size and the time step size change simultaneously is still
an open question for the proposed GSAV-EI schemes and surely worthy of further study.
{A major difficulty comes from the issue that
the matrix exponential $\e^{\dt L_\kp^n}$, defined by a power series of the sparse matrix $\dt L_\kp^n$,
is dense and affects the solution globally.
In particular, to estimate the temporal  truncation error of the GSAV-EI2 scheme (Lemma \ref{lem_etd2_error}),
an $h$-dependent bound is inevitable, and thus we fix the spatial mesh size to regard such bound as a constant
in this paper.}

{When constructing the second-order GSAV-EI scheme \eqref{eq_eisav2},
we approximate the term $N_\kp^{n+\frac{1}{2}}(u_h(t_n+\theta),s_h(t_n+\theta))$ in \eqref{etd2_exact}
by its value at the midpoint $\theta=\frac{\dt}{2}$
rather than its linear interpolation in $[0,\dt]$.
This allows the cancellation between the nonlinear terms
in the analysis of the energy dissipation (Theorem \ref{thm_eisav2_es}).
Instead, if we adopt the linear interpolation as usually done for the RK2 method,
two terms involving the numerical solutions at $t_n$ and $t_{n+1}$ will be included
with the $\phi$-functions of $\dt L_\kp^{n+\frac{1}{2}}$ as the coefficients,
which makes the cancellation unavailable due to the different coefficients
between the updating formula for $u^{n+1}$ and that for $s^{n+1}$.
For the similar reason, it is an open question whether higher-order GSAV-EI schemes exist
in either RK or multistep form, although there have been third-order multistep schemes based on the standard ETD method
for the epitaxial thin film model \cite{ChenLiWaWaWa20,ChengQiWa19}.
}

It also remains interesting on how to choose the function $\sigma$ appropriately for the GSAV-EI schemes in practical applications.
As we explain in Remark \ref{rmk_sgexp},
we only use the exponential function \eqref{test_sigma} in Section \ref{sect_experiment}
since the differences can hardly be observed for the typical choices of $\sigma$ given in Remark \ref{sgchoice}
for the specific problems we consider in the numerical experiments.
However, their performance could be significantly different for some other situations and gradient flows, and
more careful investigation is needed.  In addition, the effect  of the parameter $a$ on the numerical errors seems completely
opposite for the smooth and non-smooth  initial data based on our observation from numerical experiments, and
such phenomenon also deserves deeper study.



\appendix

\section{Proof of Lemma \ref{lem_etd2_error}}
\label{app1}

\begin{proof}
From \eqref{etd2_exact}, we have
\begin{align*}
u_{h,e}(t_{n+1}) & = \e^{-\dt L_\kp^{n+\frac{1}{2}}} u_{h,e}(t_n) + \int_0^\dt \e^{-(\dt-\theta)L_\kp^{n+\frac{1}{2}}}
N_\kp^{n+\frac{1}{2}}(u_{h,e}(t_n+\theta),s_{h,e}(t_n+\theta)) \, \d \theta \\
& = \e^{-\dt L_\kp^{n+\frac{1}{2}}} u_{h,e}(t_n)
+ \bigg(\int_0^\dt \e^{-(\dt-\theta)L_\kp^{n+\frac{1}{2}}}\, \d\theta\bigg)
N_\kp^{n+\frac{1}{2}}(\widetilde{u}_{h,e}^{n+\frac{1}{2}},\widetilde{s}_{h,e}^{n+\frac{1}{2}}) + \dt R_{2u}^n,
\end{align*}
which gives \eqref{etd2_truna} with
\begin{align*}
R_{2u}^n
& = \frac{1}{\dt} \int_0^\dt \e^{-(\dt-\theta)L_\kp^{n+\frac{1}{2}}} [N_\kp^{n+\frac{1}{2}}(u_{h,e}(t_n+\theta),s_{h,e}(t_n+\theta))
- N_\kp^{n+\frac{1}{2}}(\widetilde{u}_{h,e}^{n+\frac{1}{2}},\widetilde{s}_{h,e}^{n+\frac{1}{2}})] \, \d\theta \\
& =
\frac{1}{\dt} \bigg( \int_0^\dt [N_\kp^{n+\frac{1}{2}}(u_{h,e}(t_n+\theta),s_{h,e}(t_n+\theta))
- N_\kp^{n+\frac{1}{2}}(\widetilde{u}_{h,e}^{n+\frac{1}{2}},\widetilde{s}_{h,e}^{n+\frac{1}{2}})]\, \d\theta \\
&  +\int_0^\dt (\e^{-(\dt-\theta)L_\kp^{n+\frac{1}{2}}}-I) [N_\kp^{n+\frac{1}{2}}(u_{h,e}(t_n+\theta),s_{h,e}(t_n+\theta))-N_\kp^{n+\frac{1}{2}}(\widetilde{u}_{h,e}^{n+\frac{1}{2}},\widetilde{s}_{h,e}^{n+\frac{1}{2}})]\, \d\theta \bigg) \\
& =: \frac{1}{\dt} (R_{2u}^{n,1} + R_{2u}^{n,2}).
\end{align*}
For the function $N_\kp^{n+\frac{1}{2}}(v,r)$ defined in \eqref{Nkp_def2},
let us  denote by $\nabla_v N_\kp^{n+\frac{1}{2}}(v,r)$ and $\partial_r N_\kp^{n+\frac{1}{2}}(v,r)$
the derivatives of $N_\kp^{n+\frac{1}{2}}(v,r)$ with respect to $v$ and $r$, respectively.
By the Taylor expansion, we have
\vspace{-0.1cm}
\begin{align}
& N_\kp(u_{h,e}(t_n+\theta),s_{h,e}(t_n+\theta))
- N_\kp(\widetilde{u}_{h,e}^{n+\frac{1}{2}},\widetilde{s}_{h,e}^{n+\frac{1}{2}}) \label{lem_etd2_error_pf1} \\
&  = \nabla_vN_\kp(\widetilde{u}_{h,e}^{n+\frac{1}{2}},\widetilde{s}_{h,e}^{n+\frac{1}{2}})
(u_{h,e}(t_n+\theta)-\widetilde{u}_{h,e}^{n+\frac{1}{2}}) \nn \\
&\quad + \partial_rN_\kp(\widetilde{u}_{h,e}^{n+\frac{1}{2}},\widetilde{s}_{h,e}^{n+\frac{1}{2}})
(s_{h,e}(t_n+\theta)-\widetilde{s}_{h,e}^{n+\frac{1}{2}}) + r_e \nn \\
& = \frac{2\theta-\dt}{2}
\Big(\nabla_vN_\kp(\widetilde{u}_{h,e}^{n+\frac{1}{2}},\widetilde{s}_{h,e}^{n+\frac{1}{2}}) (u_{h,e})'(t_{n+\frac{1}{2}})
+ \partial_rN_\kp(\widetilde{u}_{h,e}^{n+\frac{1}{2}},\widetilde{s}_{h,e}^{n+\frac{1}{2}}) (s_{h,e})'(t_{n+\frac{1}{2}})\Big) \nn \\
&\quad + \nabla_vN_\kp(\widetilde{u}_{h,e}^{n+\frac{1}{2}},\widetilde{s}_{h,e}^{n+\frac{1}{2}})
\bigg(\frac{(2\theta-\dt)^2}{4} (u_{h,e})''(t_{n+\frac{1}{2}})
- \frac{\theta^2+(\theta-\dt)^2}{4} (u_{h,e})''(t_{n+\frac{1}{2}})\bigg) \nn \\
& \quad + \partial_rN_\kp(\widetilde{u}_{h,e}^{n+\frac{1}{2}},\widetilde{s}_{h,e}^{n+\frac{1}{2}})
\bigg(\frac{(2\theta-\dt)^2}{4} (s_{h,e})''(t_{n+\frac{1}{2}})
- \frac{\theta^2+(\theta-\dt)^2}{4} (s_{h,e})''(t_{n+\frac{1}{2}})\bigg) + r_e, \nn
\end{align}
where $r_e$ represents the higher-order remainder term.

If we integrate both sides of \eqref{lem_etd2_error_pf1} with respect to $\theta$ from $0$ to $\dt$ and
notice that $\int_0^\dt (2\theta-\dt) \, \d\theta=0$, then
we obtain $\|R_{2u}^{n,1}\| \le \widetilde{C}_h' \dt^3$,
where $\widetilde{C}_h'>0$ is a constant depending on $u_{h,e}$, $T$, $h$, and $\kp$,
because we have $\|u_{h,e}(t)\|_\infty\le\beta$ and $-C_*\le s_{h,e}(t)\le E_h(\uinit)$ for all $t$,
and $N_\kp(u_{h,e},s_{h,e})$ is smooth with respect to $u_{h,e}$ and $s_{h,e}$.

According to Lemma \ref{lem_expfuns}, we also know
\vspace{-0.15cm}
\[
\|\e^{-(\dt-\theta)L_\kp^{n+\frac{1}{2}}}-I\| \le (\dt-\theta) \rho(L_\kp^{n+\frac{1}{2}}) \le M_h (\dt-\theta).
\]
Combining it  with \eqref{lem_etd2_error_pf1},
we obtain that the leading term of $\|R_{2u}^{n,2}\|$ is
\[
\int_0^\dt (\dt-\theta) |2\theta-\dt| \, \d\theta = \frac{\dt^3}{4}, \vspace{-0.1cm}
\]
which implies $\|R_{2u}^{n,2}\| \le \widetilde{C}_h'' \dt^3$
for some constant $\widetilde{C}_h''>0$ depending on $u_{h,e}$, $T$, $h$, and $\kp$.
Thus we complete the proof of the first inequality in \eqref{etd2_trunerr}.

The second inequality in \eqref{etd2_trunerr} can be viewed as a direct consequence of the Crank--Nicolson discretization.
\end{proof}

\section{Proof of Lemma \ref{lem_etd20_error}}
\label{app2}

\begin{proof}
According to the proof of Theorem \ref{thm_etd1_error},
the error equations with respect to $\widetilde{e}_u^{n+1}$ and $\widetilde{e}_s^{n+1}$ are given by
\begin{subequations}
\label{etd2_err1}
\begin{align}
\widetilde{e}_u^{n+1} - e_u^n
& = (\e^{-\dt L_\kp^n} - I) e_u^n + \dt \phi_1(-\dt L_\kp^n)
[N_\kp^n(u^n,s^n) \label{etd2_err1a} \\
&\qquad- N_\kp^n(u_{h,e}(t_n),s_{h,e}(t_n))] - \int_0^\dt \e^{-(\dt-\theta)L_\kp^n} R_{1u}^n(\theta) \, \d\theta, \nn \\
\widetilde{e}_s^{n+1}-e_s^n
& = \< g(u_{h,e}(t_n),s_{h,e}(t_n)) f(u_{h,e}(t_n)) - g(u^n,s^n) f(u^n),  \label{etd2_err1b} \\
& \qquad u_{h,e}(t_{n+1})-u_{h,e}(t_n) \>- g(u^n,s^n) \<f(u^n),\widetilde{e}_u^{n+1}-e_u^n\> - \dt R_{1s}^n, \nn
\end{align}
\end{subequations}
where the truncation errors $R_{1u}^n$ and $R_{1s}^n$ are
identical to those in \eqref{etd1_eqv_trun} and \eqref{sav1trunb}, respectively,
and satisfy \eqref{eisav1trunerr}.

Taking the discrete inner product of \eqref{etd2_err1a} with $2\widetilde{e}_u^{n+1}$ and using Lemma \ref{lem_expfuns} and \eqref{etd1_err_nonlinear},
we get
\begin{align*}
& \|\widetilde{e}_u^{n+1}\|^2 - \|e_u^n\|^2 + \|\widetilde{e}_u^{n+1} - e_u^n\|^2 \nn \\
&  \le 4 \|e_u^n\| \|\widetilde{e}_u^{n+1}\|
\!+\! 2\dt \|N_\kp^n(u^n\!,s^n) \!-\! N_\kp^n(u_{h,e}(t_n),s_{h,e}(t_n))\| \|\widetilde{e}_u^{n+1}\|
\!+\! 2\dt \!\!\! \sup_{\theta\in(0,\dt)} \!\!\! \|R_{1u}^n(\theta)\| \|\widetilde{e}_u^{n+1}\| \nn \\
& \le 16 \|e_u^n\|^2 + \frac{1}{4} \|\widetilde{e}_u^{n+1}\|^2
+ 8\dt^2 [(C_{g}+G^*\kp)^2 \|e_u^n\|^2 + C_{g}^2 |e_s^n|^2] + \frac{1}{4} \|\widetilde{e}_u^{n+1}\|^2 \nn \\
& \quad + 4 \dt^2 \sup_{\theta\in(0,\dt)} \|R_{1u}^n(\theta)\|^2 + \frac{1}{4} \|\widetilde{e}_u^{n+1}\|^2 \nn \\
& = 16 \|e_u^n\|^2 + 8(C_{g}+G^*\kp)^2 \dt^2 \|e_u^n\|^2 + 8C_{g}^2 \dt^2 |e_s^n|^2
+ \frac{3}{4} \|\widetilde{e}_u^{n+1}\|^2
+ 4 \dt^2 \!\! \sup_{\theta\in(0,\dt)} \!\! \|R_{1u}^n(\theta)\|^2,
\end{align*}

\vspace{-0.15cm}\noindent
and then,
\vspace{-0.1cm}
\begin{align*}
\frac{1}{4}\|\widetilde{e}_u^{n+1}\|^2 + \|\widetilde{e}_u^{n+1} - e_u^n\|^2
& \le 17 \|e_u^n\|^2 + 8(C_{g}+G^*\kp)^2 \dt^2 \|e_u^n\|^2 \\
& \quad + 8C_{g}^2 \dt^2 |e_s^n|^2 + 4 \dt^2 \sup_{\theta\in(0,\dt)} \|R_{1u}^n(\theta)\|^2.
\end{align*}
When $\dt\le 1$, by using \eqref{eisav1trunerr}, we get
\begin{equation}
\label{etd2_err_pf9}
\|\widetilde{e}_u^{n+1}\|^2 + 4\|\widetilde{e}_u^{n+1} - e_u^n\|^2
\le (68 + 32(C_{g}+G^*\kp)^2) \|e_u^n\|^2 + 32C_{g}^2 |e_s^n|^2 + 16C_{e,h}^2 \dt^4.
\end{equation}

Multiplying \eqref{etd2_err1b} by $2\widetilde{e}_s^{n+1}$ yields
\begin{align*}
& |\widetilde{e}_s^{n+1}|^2 - |e_s^n|^2 + |\widetilde{e}_s^{n+1}-e_s^n|^2 \nn \\
& \quad = 2\widetilde{e}_s^{n+1}
\< g(u_{h,e}(t_n),s_{h,e}(t_n)) f(u_{h,e}(t_n)) - g(u^n,s^n) f(u^n), u_{h,e}(t_{n+1})-u_{h,e}(t_n) \> \nn \\
& \quad\quad - 2 \widetilde{e}_s^{n+1} g(u^n,s^n) \<f(u^n),\widetilde{e}_u^{n+1}-e_u^n\>
- 2\dt R_{1s}^n \widetilde{e}_s^{n+1}.
\end{align*}
The last two terms on the right-hand side of the above equality can be estimated as
\begin{eqnarray*}
& - 2\dt R_{1s}^n \widetilde{e}_s^{n+1}
\le 4\dt^2 |R_{1s}^n|^2 + \frac{1}{4} |\widetilde{e}_s^{n+1}|^2,\\
& - 2 \widetilde{e}_s^{n+1} g(u^n,s^n) \<f(u^n),\widetilde{e}_u^{n+1}-e_u^n\>
\le \frac{1}{4} |\widetilde{e}_s^{n+1}|^2 + C_4 \|\widetilde{e}_u^{n+1}-e_u^n\|^2
\end{eqnarray*}
with $C_4>0$ depending on $C_*$, $|\Omega|$, $\uinit$, and $\|f\|_{C[-\beta,\beta]}$.
 The first term  can be estimated in the similar way to \eqref{sav1err_pf7a}, and then
we obtain
\begin{align*}
|\widetilde{e}_s^{n+1}|^2 - |e_s^n|^2 + |\widetilde{e}_s^{n+1}-e_s^n|^2
& \le C_1 \dt (\|e_u^n\|^2 + |e_s^n|^2 + |\widetilde{e}_s^{n+1}|^2) \nn \\
& \quad + C_4 \|\widetilde{e}_u^{n+1}-e_u^n\|^2 + \frac{1}{2} |\widetilde{e}_s^{n+1}|^2 + 4\dt^2 |R_{1s}^n|^2,
\end{align*}

\vspace{-0.2cm}\noindent
and thus,
\[
(1-2C_1\dt) |\widetilde{e}_s^{n+1}|^2
\le 2|e_s^n|^2 + 2C_1 \dt (\|e_u^n\|^2 + |e_s^n|^2) + 2C_4 \|\widetilde{e}_u^{n+1}-e_u^n\|^2 + 8\dt^2 |R_{1s}^n|^2.
\]
When $\dt\le \frac{1}{4C_1}$, we can get  by using \eqref{eisav1trunerr},
\begin{equation}
\label{etd2_err_pf10}
|\widetilde{e}_s^{n+1}|^2 \le \|e_u^n\|^2 + 5|e_s^n|^2 + 4C_4 \|\widetilde{e}_u^{n+1}-e_u^n\|^2 + 16C_{e,h}^2\dt^4.
\end{equation}
The sum of \eqref{etd2_err_pf9} multiplied by $C_4$ and \eqref{etd2_err_pf10} leads to \eqref{etd2_err_pf11}.
\end{proof}

\end{document}